\documentclass[oneside,reqno,a4paper,11pt]{article}
\usepackage{authblk}
\title{Well-posedness for the incompressible Hall-MHD system with initial magnetic field belonging to $H^{\frac{3}{2}}(\mathbb{R}^3)$}
\author[1]{Shunhang Zhang}

\date{}
\usepackage[sort,space,noadjust]{cite}
\usepackage[colorlinks,
linkcolor=blue,
anchorcolor=blue,
citecolor=blue]{hyperref}
\usepackage{amsfonts}
\usepackage{amsmath,amsfonts,amssymb,amsthm,mathtools}
\usepackage{CJK,bbm,color,soul,supertabular,longtable,verbatim}
\usepackage{bibspacing}
\setlength{\bibspacing}{0\baselineskip}
\usepackage[warning, easyold,math]{easyeqn}
\usepackage{graphicx}
\usepackage{color}
\usepackage{enumerate}
\usepackage{titlesec}
\usepackage{breqn}
\newtheorem{thm}{Theorem}[section]
\newtheorem{prop}{Proposition}[section]
\newtheorem{lem}{Lemma}[section]
\newtheorem{rem}{Remark}[section]
\newtheorem{definition}{Definition}[section]

\newcommand\thmref[1]{Theorem~\ref{#1}}
\newcommand\propref[1]{Proposition~\ref{#1}}
\newcommand\lemref[1]{Lemma~\ref{#1}}
\newcommand\remref[1]{Remark~\ref{#1}}

\DeclareMathOperator{\Div}{div}

\DeclareMathOperator{\Id}{Id}
\allowdisplaybreaks[4]

\hoffset -2cm\voffset -1.5cm \setlength{\parindent}{6mm}
\setlength{\parskip}{3pt plus1pt minus2pt}
\setlength{\baselineskip}{2pt plus10pt minus10pt}
\setlength{\textheight}{23.5true cm} \setlength{\textwidth}{16true
cm}

\numberwithin{equation}{section}
\usepackage{breqn}
\bibliographystyle{abbrv}
  \begin{document}
\maketitle
\begin{abstract}
In this paper, we first prove the local well-posedness of strong solutions to the incompressible Hall-MHD system for initial data $(u_0,B_0)\in H^{\frac{1}{2}+\sigma}(\mathbb{R}^3)\times H^{\frac{3}{2}}(\mathbb{R}^3)$ with $\sigma\in (0,2)$. In particular, if the viscosity coefficient is equal to the resistivity coefficient, we can reduce $\sigma$ to $0$ with the aid of the new formulation of the Hall-MHD system observed by Danchin and Tan (Commun Partial Differ Equ 46(1):31-65, 2021). Compared with the previous works, our local well-posedness results improve the regularity condition on the initial data. Moreover, we establish the global well-posedness for small initial data in $H^{\frac{1}{2}+\sigma}(\mathbb{R}^3)\times H^{\frac{3}{2}}(\mathbb{R}^3)$ with $\sigma\in (0,2)$,  and get the optimal time-decay rates of solutions.
\end{abstract}
\begin{flushleft}
	\textbf{Keywords:} Hall-MHD; Well-posedness; Sobolev spaces; Optimal time-decay rates
\end{flushleft}
\begin{flushleft}
	\textbf{2020 Mathematics Subject Classification:} 35A01; 35Q35; 35B40; 76D03; 76W05	
\end{flushleft}
\section{Introduction}
This paper is concerned with the Cauchy problem of the viscous and resistive incompressible Hall-magnetohydrodynamics (Hall-MHD) system in $\mathbb{R}^3$:
\begin{equation}\label{main eq}
\begin{cases}
\partial_tu-\mu\Delta u+u\cdot\nabla u+\nabla P=B\cdot\nabla B,\\
\partial_tB-\nu\Delta B-\nabla \times(u\times B)+\kappa\nabla\times((\nabla\times B)\times B)=0,\\
\Div u=\Div B=0,\\
u(x,0)=u_0(x),\quad B(x,0)=B_0(x).
\end{cases}
\end{equation}Here, $u=u(t,x)\in\mathbb{R}^3$, $B(t,x)\in\mathbb{R}^3$ and the scalar function $P=P(t,x)$ with $(t,x)\in \mathbb{R}^+\times\mathbb{R}^3$ stand for the velocity field, the magnetic field and the scalar pressure, respectively. The positive parameters $\mu$, $\nu$ and $\kappa$ are viscosity coefficient, resistivity coefficient and Hall effect coefficient, respectively.\par Notice that if $\Div B_0=0$, the divergence-free property $\Div B=0$ is preserved all the time yielding\begin{equation}
-\nabla \times(u\times B)=u\cdot\nabla B-B\cdot\nabla u.
\end{equation}Compared with the classical MHD system, the Hall-MHD system possesses the so-called Hall term $\kappa\nabla\times((\nabla\times B)\times B)$ in the magnetic equation. Due to the appearance of this extra term, the Hall-MHD system can be applied to describing the magnetic reconnection phenomenon, which occurs in plasmas, star formation, solar flares, neutron stars or geo-dynamo, etc (see \cite{MR2167907,forbes1991magnetic,wardle2004star,balbus2001linear,shalybkov1997hall,mininni2003dynamo}). Meanwhile, since the Hall term contains the second order derivatives, it seems that the mathematical analysis of the Hall-MHD system becomes more complicated than that for the classical MHD system.\par 
Owing to its significance in physics and mathematics, the Hall-MHD sytem has drawn much attention from various researchers. Next, we briefly introduce some of them which are related to our problem. Acheritogaray et al. in \cite{MR2861579} gave a rigorous mathematical derivation of the Hall-MHD system both from a two-fluids model and from a kinetic model. Chae et al. \cite{MR3208454} established the global existence of weak solutions as well as the local well-posedness of smooth solutions for initial data in $H^s(\mathbb{R}^3)$ with $s>\frac{5}{2}$. Moreover, they obtained the global well-posedness of smooth solutions for small initial data. Subsequently, some authors \cite{MR3186849,MR3955610,MR3903774,MR3397315} relaxed the smallness condition by only requiring some suitable Sobolev or Besov norms of initial data to be small. Serrin type blow-up criteria for smooth solutions were derived by Chae and Lee in \cite{MR3186849}. Local well-posedness of strong solutions for initial data with improved regularity condition in $H^s(\mathbb{R}^3)$ were obtained in \cite{MR3513590,MR4060362,MR3770153}, these results relax the restriction on $s$ to $s>\frac{3}{2}$. Recently, Dai \cite{MR4248458} proved the local well-posedness of \eqref{main eq} in $H^{s}(\mathbb{R}^3)\times H^{s+1-\sigma}(\mathbb{R}^3)$ with $s>\frac{1}{2}$ and sufficiently small $\sigma>0$ such that $s+1-\sigma>\frac{3}{2}$. Fan et al. \cite{MR3069946} established the global well-posedness of \eqref{main eq} for a class of axisymmetric initial data. The authors in \cite{MR4193644,MR4188985} proved the global well-posedness of \eqref{main eq} for small initial data in critical Besov spaces by means of some new observations. In addition, it is significant to investigate the large time behavior of solutions to \eqref{main eq}. Chae and Schonbek \cite{MR3097244} obtained the time-decay estimates for weak solutions and smooth solutions with small initial data by using the Fourier splitting techniques. Weng \cite{MR3460237,MR3460222} studied the analyticity and space-time decay estimates for smooth solutions.\par 
On the other hand, observe that the system \eqref{main eq} does not have any scaling invariance as the classical MHD system. However, if we set $B\equiv 0$ in \eqref{main eq}, then the system \eqref{main eq} becomes the incompressible Navier-Stokes system:\begin{equation}
	\begin{cases}
		\partial_tu-\mu\Delta u+u\cdot\nabla u+\nabla P=0,\\
		\Div u=0,\\
		u(x,0)=u_0(x),
	\end{cases}
\end{equation}which is invariant for all $\lambda>0$ under the change
\begin{equation}\label{scaling1}
(u(t,x),P(t,x),u_0(x))\mapsto (\lambda u(\lambda^2t,\lambda x),\lambda^2 P(\lambda^2t,\lambda x),\lambda u_0(\lambda x)).
\end{equation}Also, if $u\equiv 0$, one may see that\begin{equation}
\begin{cases}
	\partial_tB-\nu\Delta B+\kappa\nabla\times((\nabla\times B)\times B)=0,\\
	B(x,0)=B_0(x),
\end{cases}
\end{equation}which is invariant for all $\lambda>0$ under the change
\begin{equation}\label{scaling2}
(B(t,x),B_0(x))\mapsto (B(\lambda^2t,\lambda x),B_0(\lambda x)).
\end{equation}We should point out that all previous results on the local well-posedness of \eqref{main eq} in $H^{s}(\mathbb{R}^3)\times H^{r}(\mathbb{R}^3)$ require $s>\frac{1}{2}$ and $r>\frac{3}{2}$. Whereas, from the viewpoint of scaling invariance, the scalings \eqref{scaling1} and \eqref{scaling2} suggest that $u_0\in H^{\frac{1}{2}}(\mathbb{R}^3)$ or $B_0\in H^{\frac{3}{2}}(\mathbb{R}^3)$ may be a suitable data choice for \eqref{main eq} to establish local well-posedness. Motivated by this, we attempt to do this in paper. The first result of this paper is to prove the local well-posedness of \eqref{main eq} for $B_0\in H^{\frac{3}{2}}(\mathbb{R}^3)$ and $u_0$ is slightly regular than $H^{\frac{1}{2}}(\mathbb{R}^3)$, which is stated as follows.
\begin{thm}\label{thm1}
Let $0<\sigma<2$ and $(u_0,B_0)\in H^{\frac{1}{2}+\sigma}(\mathbb{R}^3)\times H^{\frac{3}{2}}(\mathbb{R}^3)$ with $\Div u_0=\Div B_0=0$. Then there exists a positive time $T$ such that the system \eqref{main eq} admits a unique solution $(u,B)$ with
\begin{equation}\label{local}
\begin{split}
u&\in C([0,T];H^{\frac{1}{2}+\sigma}(\mathbb{R}^3))\cap L^2(0,T;H^{\frac{3}{2}+\sigma}(\mathbb{R}^3)),\\
B&\in C([0,T];H^{\frac{3}{2}}(\mathbb{R}^3))\cap L^2(0,T;H^{\frac{5}{2}}(\mathbb{R}^3)).
\end{split}
\end{equation}
\end{thm}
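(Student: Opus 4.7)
The plan is to proceed by a Friedrichs approximation scheme followed by a uniform a priori estimate at the target regularity and a routine passage to the limit. Concretely, I would regularize via the spectral cutoff $\mathcal{J}_n=\mathcal{F}^{-1}\mathbf{1}_{\{|\xi|\le n\}}\mathcal{F}$, apply the Leray projector $\mathbb{P}$ to eliminate the pressure, and solve the resulting ODE in $\mathbb{P}\mathcal{J}_nL^2\times\mathcal{J}_nL^2$ by Cauchy--Lipschitz. The whole program then reduces to establishing a differential inequality of the form
\begin{equation*}
\tfrac{d}{dt}E(t)+\mu\|\nabla u\|_{H^{1/2+\sigma}}^2+\nu\|\nabla B\|_{H^{3/2}}^2\le C\,\Phi\bigl(E(t)\bigr),\qquad E(t)=\|u\|_{H^{1/2+\sigma}}^2+\|B\|_{H^{3/2}}^2,
\end{equation*}
with locally integrable $\Phi$, after which a continuity argument yields a time $T=T(E(0))>0$ of existence uniform in $n$.

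The Navier--Stokes-type nonlinearities are handled by the usual commutator-plus-Leibniz technique: apply $\Lambda^{1/2+\sigma}$ (resp.\ $\Lambda^{3/2}$) to the $u$-equation (resp.\ $B$-equation), pair with $\Lambda^{1/2+\sigma}u$ (resp.\ $\Lambda^{3/2}B$), use $\Div u=\Div B=0$ to kill the top-order transport piece, and control the remaining terms via Kato--Ponce bounds and Sobolev embedding in $\mathbb{R}^3$. The upper bound $\sigma<2$ enters precisely through the Lorentz-force term $B\cdot\nabla B$: since $H^{3/2}(\mathbb{R}^3)\cdot H^{3/2}(\mathbb{R}^3)\hookrightarrow H^s$ only for $s<3/2$, the product $B\cdot\nabla B$ lives in $H^{\sigma-1/2}$ exactly when $\sigma<2$, which is the regularity required to absorb this contribution into $\mu\|\nabla u\|_{H^{1/2+\sigma}}^2$ by Young's inequality, using the parabolic gain $B\in L^\infty_TH^{3/2}\cap L^2_TH^{5/2}$.

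The main obstacle is the quasilinear Hall term $\nabla\times(j\times B)$ with $j=\nabla\times B$, which carries two derivatives on $B$ and must be controlled at the scaling-critical regularity $H^{3/2}$. Naive product estimates lose half a derivative compared to what $\nu\|\nabla B\|_{H^{3/2}}^2$ can absorb, so one is forced to exploit the algebraic structure. Since $\nabla\times$ is formally $L^2$-self-adjoint and commutes with $\Lambda^{3/2}$, the key pairing reduces to
\begin{equation*}
\bigl\langle\Lambda^{3/2}\nabla\times(j\times B),\,\Lambda^{3/2}B\bigr\rangle=\bigl\langle\Lambda^{3/2}(j\times B),\,\Lambda^{3/2}j\bigr\rangle,
\end{equation*}
and a Bony paraproduct splitting $\Lambda^{3/2}(j\times B)=(\Lambda^{3/2}j)\times B+j\times\Lambda^{3/2}B+\mathcal{R}(j,B)$ turns the first summand into $\int((\Lambda^{3/2}j)\times B)\cdot\Lambda^{3/2}j\,dx$, which vanishes identically by the pointwise triple-product identity $(a\times b)\cdot a=0$. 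The remaining summand and $\mathcal{R}$ are then redistributed by integration by parts, using $\Div B=\Div j=0$, and by Kato--Ponce commutator bounds, so that at most one factor of $\|\Lambda^{5/2}B\|_{L^2}$ survives; Young's inequality absorbs this factor into $\nu\|\nabla B\|_{H^{3/2}}^2$ at the cost of superlinear terms in $E(t)$. This cancellation is what makes the critical regularity on $B$ attainable, and the commutator bookkeeping in this step is what I expect to demand the most care. Once the a priori inequality is closed, passage to the limit by Aubin--Lions compactness, uniqueness via an energy estimate in a weaker norm (invoking the same Hall-term cancellation on the difference of two solutions), and strong time continuity via the Lions--Magenes lemma are all routine.
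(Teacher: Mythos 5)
Your treatment of the Hall term is where the argument breaks down, and it breaks down at exactly the point this paper is designed to circumvent. After the cancellation $\int((\Lambda^{3/2}j)\times B)\cdot\Lambda^{3/2}j\,dx=0$, what survives is in essence the commutator $[\Lambda^{3/2},B\times]j$ paired with $\Lambda^{3/2}j$. Running Kato--Ponce and Sobolev embedding on it, the best available bound is
\begin{equation*}
\bigl|\bigl\langle[\Lambda^{3/2},B\times]j,\;\Lambda^{3/2}j\bigr\rangle\bigr|\lesssim\|B\|_{\dot H^{2}}^{2}\,\|B\|_{\dot H^{5/2}}\lesssim\|B\|_{H^{3/2}}\,\|\nabla B\|_{H^{3/2}}^{2},
\end{equation*}
using $\|B\|_{\dot H^2}^2\le\|B\|_{\dot H^{3/2}}\|B\|_{\dot H^{5/2}}$. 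This is of exactly the same order as the dissipation $\nu\|\nabla B\|_{H^{3/2}}^2$, with prefactor $\|B\|_{H^{3/2}}$, which is \emph{not} small for general large data. Young's inequality therefore cannot convert it into $C\Phi(E(t))$ with locally integrable $\Phi$, and the differential inequality you propose does not close at $r=3/2$. This is precisely the computation in the remark of the paper explaining why $B_0\in H^{3/2}$ forces a different method: the commutator route yields $\|B\|_{H^r}^{r-1/2}\|\nabla B\|_{H^r}^{7/2-r}$, and the exponent $7/2-r$ equals $2$ at the critical index $r=3/2$. Your claim that ``at most one factor of $\|\Lambda^{5/2}B\|_{L^2}$ survives'' is true, but that one factor is multiplied by $\|B\|_{\dot H^2}^2\sim\|B\|_{\dot H^{3/2}}\|B\|_{\dot H^{5/2}}$, which already contains a second factor of the dissipation.

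The paper escapes this by abandoning the pointwise-in-time differential inequality altogether. It applies $\dot\Delta_j$ to the $B$-equation, uses the same commutator cancellation blockwise, solves the resulting ODE by Duhamel against $e^{-c2^{2j}t}$, and takes $\widetilde L^p_T$ norms; the Hall commutator is then bounded by $\|B\|_{L^4_T(\dot H^2)}^2$ with no power of $T$ in front, and the quantity $\|B\|_{\widetilde L^4_T(\dot H^2)}$ is itself made small --- not by a power of $T$, but by the term $\bigl[\sum_j(1-e^{-4c2^{2j}T})^{1/2}(2^{3j/2}\|\dot\Delta_jB_0\|_{L^2})^2\bigr]^{1/2}$, which tends to $0$ as $T\to0$ after splitting $B_0$ into finitely many low frequencies and a small high-frequency tail. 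A consequence worth noting is that the existence time is then not a function of $\|B_0\|_{H^{3/2}}$ alone but depends on the frequency profile of $B_0$, as is typical at critical regularity; your claim $T=T(E(0))$ is therefore also too strong. The rest of your scheme (Friedrichs approximation, compactness, uniqueness in $L^2$, time continuity) matches the paper, but without repairing the Hall-term step the proof does not go through.
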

\begin{rem}\upshape
	Compared with the previous local well-posedness results in nonhomogeneous Sobolev spaces (see e.g. \cite{MR4248458,MR3770153,MR4060362,MR3208454,MR3513590}), our result improves the regularity condition on the initial magnetic field $B_0$ to $B_0\in H^{\frac{3}{2}}(\mathbb{R}^3)$. 
\end{rem}
\begin{rem}\upshape
Let us explain why we can tackle the magnetic field $B$ in $H^{\frac{3}{2}}(\mathbb{R}^3)$. In \cite{MR4248458,MR3770153,MR4060362,MR3208454,MR3513590}, the authors derived the uniform bounds for approximate solutions by establishing the inequality of the following type:\begin{equation}\begin{split}
	&\frac{d}{dt}\Big(\|u(t)\|_{{H}^s}^2+\|B(t)\|_{{H}^r}^2\Big)+\|\nabla u(t)\|_{{H}^{s}}^2+\|\nabla B(t)\|_{{H}^{r}}^2
	\\&\lesssim \Big(\|u(t)\|_{{H}^s}^2+\|B(t)\|_{{H}^r}^2\Big)^{1+\alpha}\quad\text{for some}\quad \alpha>0.
\end{split}\end{equation}However, when tackling the $\dot{H}^r$ estimates of the magnetic equation, one usually use \lemref{commues3} and interpolation to treat the Hall term as\begin{equation}\begin{split}
\Big(\Lambda^r \nabla\times(B\times(\nabla\times B)),\Lambda^r B\Big)&\lesssim\|[\Lambda^{r},B\times](\nabla\times B)\|_{L^2}\|B\|_{\dot{H}^{r+1}}\\
&\lesssim\|\Lambda^{r}B\|_{L^{p_1}}\|\nabla B\|_{L^{p_2}}\|B\|_{\dot{H}^{r+1}}\\
&\lesssim\|B\|_{\dot{H}^{r+\frac{3}{p_2}}}\|B\|_{\dot{H}^{1+\frac{3}{p_1}}}\|B\|_{\dot{H}^{r+1}}\\
&\lesssim \Big(\|B\|_{{H}^{r}}^{1-\frac{3}{p_2}}\|\nabla B\|_{{H}^{r}}^{\frac{3}{p_2}}\Big)\Big(\|B\|_{{H}^{r}}^{r-\frac{3}{p_1}}\|\nabla B\|_{{H}^{r}}^{\frac{3}{p_1}+1-r}\Big)\|\nabla B\|_{{H}^{r}}\\
&\lesssim \|B\|_{{H}^{r}}^{r-\frac{1}{2}}\|\nabla B\|_{{H}^{r}}^{\frac{7}{2}-r},
\end{split}
\end{equation}where $\Lambda:=(-\Delta)^{\frac{1}{2}}$ and $\frac{1}{p_1}+\frac{1}{p_2}=\frac{1}{2}$ with $p_2\geq 3$. From above, the requirement $r>\frac{3}{2}$ seems inevitable due to the use of Young's inequality for splitting the last term. Thus, such an analysis inspires us to choose another method to deal with the case $B_0\in H^{\frac{3}{2}}(\mathbb{R}^3)$. Instead, we will use the classical continuity method to establish the uniform bounds in the proof of \thmref{thm1}, where the Fourier localization techniques and paradifferential calculus play a crucial role.
\end{rem}
Notice that the product estimate\begin{equation}\label{bades}
\|\nabla\times(u\times B)\|_{\dot{H}^{\frac{1}{2}}}\lesssim \|u\|_{\dot{H}^{\frac{3}{2}-\eta}}\|B\|_{\dot{H}^{\frac{3}{2}+\eta}}+\|u\|_{\dot{H}^{\frac{3}{2}+\theta}}\|B\|_{\dot{H}^{\frac{3}{2}-\theta}}
\end{equation}only holds for $\eta,\theta>0$, so in order to bound $\|\nabla\times(u\times B)\|_{L_t^2(\dot{H}^{\frac{1}{2}})}$ in the proof of \thmref{thm1}, the velocity $u$ should belong to $L^p_t(\dot{H}^{\frac{3}{2}+\theta})$ for some $p>2$. To attain this, we need to let initial velocity $u_0\in \dot{H}^{\frac{1}{2}+\sigma}$ for some $\sigma>\theta>0$ due to the parabolic smooth effect, which is the reason why the restriction $\sigma>0$ exists in \thmref{thm1}. Recently, Danchin and Tan in \cite{MR4193644} observed that if $\mu=\nu$, regarding $v:=u-\kappa\nabla\times B$ as a new unknown, the system \eqref{main eq} can be rewritten as (one can see \cite[P36-37]{MR4193644} for derivation):\begin{equation}\label{extend eq}
\begin{cases}
	\partial_tu-\mu\Delta u=B\cdot\nabla B-u\cdot\nabla u-\nabla P,\\
	\partial_tB-\mu\Delta B=\nabla\times (v\times B),\\
	\partial_tv-\mu\Delta v=B\cdot\nabla B-u\cdot\nabla u-\kappa\nabla\times((\nabla\times v)\times B)\\+\nabla\times(v\times u)+2\kappa\nabla\times (v\cdot\nabla B)-\nabla P,\\
	\Div u=\Div B=\Div v=0.
\end{cases}
\end{equation}Based on this new formulation, they prove the global well-posedness of \eqref{main eq} for small data in critical Besov spaces $\dot{B}^{\frac{1}{2}}_{2,r}(\mathbb{R}^3)\times (\dot{B}^{\frac{1}{2}}_{2,r}(\mathbb{R}^3)\cap\dot{B}^{\frac{3}{2}}_{2,r}(\mathbb{R}^3))$ with $1\leq r\leq\infty$ and local well-posedness for large data in $\dot{B}^{\frac{1}{2}}_{2,1}(\mathbb{R}^3)\times (\dot{B}^{\frac{1}{2}}_{2,1}(\mathbb{R}^3)\cap\dot{B}^{\frac{3}{2}}_{2,1}(\mathbb{R}^3))$. Actually, in the case of $\mu=\nu$, considering \eqref{extend eq} instead of \eqref{main eq} brings us more advantages. Specifically, as $\|B\|_{\dot{H}^{\frac{3}{2}}}\lesssim \|u\|_{\dot{H}^{\frac{1}{2}}}+\|v\|_{\dot{H}^{\frac{1}{2}}}$, it suffices to derive the $\dot{H}^{\frac{1}{2}}$-bound of $(u,B,v)$, which can help us avoid using the estimate \eqref{bades}. Besides, the quasilinear term $\kappa\nabla\times((\nabla\times v)\times B)$ has a cancelation property. By ultilizing \eqref{extend eq}, we indeed can reduce $\sigma$ to $0$ in \thmref{thm1} when $\mu=\nu$. Precisely, we have
\begin{thm}\label{thm2}
	Assume that $\mu=\nu$. Let $(u_0,B_0)\in H^{\frac{1}{2}}(\mathbb{R}^3)\times H^{\frac{3}{2}}(\mathbb{R}^3)$ with $\Div u_0=\Div B_0=0$. Then there exists a positive time $T$ such that the system \eqref{main eq} admits a unique solution $(u,B)$ with
	\begin{equation}
		\begin{split}
			u&\in C([0,T];H^{\frac{1}{2}}(\mathbb{R}^3))\cap L^2(0,T;H^{\frac{3}{2}}(\mathbb{R}^3)),\\
			B&\in C([0,T];H^{\frac{3}{2}}(\mathbb{R}^3))\cap L^2(0,T;H^{\frac{5}{2}}(\mathbb{R}^3)).
		\end{split}
	\end{equation}
\end{thm}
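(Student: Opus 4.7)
My plan is to work with the Danchin–Tan reformulation \eqref{extend eq} in the unknowns $(u,B,v)$ with $v:=u-\kappa\nabla\times B$, and to close an energy estimate at the single level $\dot H^{\frac12}$ for the pair $(u,v)$. The datum $v_0=u_0-\kappa\nabla\times B_0$ lies in $H^{\frac12}(\mathbb R^3)$ because $B_0\in H^{\frac32}$. Moreover, since $\Div B=0$ gives $-\kappa\Delta B=\nabla\times(u-v)$, standard elliptic regularity yields $\|B\|_{\dot H^s}\lesssim \|u\|_{\dot H^{s-1}}+\|v\|_{\dot H^{s-1}}$ for each $s$; in particular, $(u,v)\in L^\infty_T H^{\frac12}\cap L^2_T H^{\frac32}$ automatically delivers the regularity $B\in L^\infty_T H^{\frac32}\cap L^2_T H^{\frac52}$ claimed in the statement. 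This is what allows us to dispense with the product estimate \eqref{bades} and the condition $\sigma>0$ imposed in \thmref{thm1}.

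For the construction I would use a Friedrichs approximation: apply the frequency cutoff $J_n f:=\mathcal F^{-1}(\mathbf 1_{|\xi|\le n}\hat f)$ together with the Leray projector to every nonlinearity of \eqref{extend eq}, obtaining an ODE on the closed subspace $J_n L^2$ with global-in-time solutions $(u_n,B_n,v_n)$ satisfying $B_n=-\kappa^{-1}(-\Delta)^{-1}\nabla\times(u_n-v_n)$. The core task is then a uniform bound on some interval $[0,T]$ independent of $n$. Applying $\Lambda^{\frac12}$ to the equations for $u$ and $v$ and testing with $\Lambda^{\frac12}u$ and $\Lambda^{\frac12}v$, the viscous part gives $\mu(\|u\|_{\dot H^{\frac32}}^2+\|v\|_{\dot H^{\frac32}}^2)$. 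The quadratic nonlinearities $u\cdot\nabla u$, $B\cdot\nabla B$, $\nabla\times(v\times u)$ and $\nabla\times(v\cdot\nabla B)$ are handled by Bony decomposition, the embedding $\dot H^{\frac12}(\mathbb R^3)\hookrightarrow L^3$, standard product estimates, and the elliptic control of $B$; each contribution is bounded by low-norm prefactors involving $\|u\|_{\dot H^{\frac12}}+\|v\|_{\dot H^{\frac12}}$ times $\|u\|_{\dot H^{\frac32}}^2+\|v\|_{\dot H^{\frac32}}^2$, hence absorbable by the viscous dissipation on a short time interval.

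The principal obstacle is the quasilinear Hall term $-\kappa\nabla\times((\nabla\times v)\times B)$ in the $v$-equation, whose derivative count matches that of the viscous operator. Integrating by parts the outer curl produces
\[
-\kappa\bigl(\Lambda^{\frac12}((\nabla\times v)\times B),\Lambda^{\frac12}(\nabla\times v)\bigr)_{L^2},
\]
and the splitting $\Lambda^{\frac12}(w\times B)=(\Lambda^{\frac12}w)\times B+[\Lambda^{\frac12},B\,\times]w$ with $w:=\nabla\times v$ reveals that the leading piece $((\Lambda^{\frac12}w)\times B,\Lambda^{\frac12}w)_{L^2}$ vanishes pointwise thanks to $(a\times B)\cdot a=0$. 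The commutator remainder is controlled by a Kato–Ponce type estimate by $\|B\|_{\dot H^{\frac32}}\|v\|_{\dot H^{\frac32}}^2$, which, using the elliptic bound for $B$, becomes $\lesssim(\|u\|_{\dot H^{\frac12}}+\|v\|_{\dot H^{\frac12}})\|v\|_{\dot H^{\frac32}}^2$ and is absorbed. This cancellation is exactly what the $v$-formulation provides and what the variable $B$ alone does not; it is the reason the critical endpoint $\sigma=0$ is reachable when $\mu=\nu$.

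Combining all these estimates yields a differential inequality of the form $\frac{d}{dt}X(t)+\mu Y(t)\le C X(t)(1+Y(t))$ with $X(t):=\|u\|_{H^{\frac12}}^2+\|v\|_{H^{\frac12}}^2$ and $Y(t):=\|u\|_{H^{\frac32}}^2+\|v\|_{H^{\frac32}}^2$, from which a standard continuity argument provides a uniform lifespan $T=T(\|u_0\|_{H^{\frac12}}+\|v_0\|_{H^{\frac12}})$ for the approximate solutions. Aubin–Lions compactness then passes to the limit $n\to\infty$ to give a solution of \eqref{extend eq}, hence of \eqref{main eq}, with the required regularity, and continuity in time follows from the energy equality. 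For uniqueness, since $(u,v)$ sits at scaling-critical regularity, I would estimate the difference of two solutions in the one-derivative-lower space $L^\infty_T L^2\cap L^2_T\dot H^1$; the identity $(a\times B)\cdot a=0$ applies verbatim to the Hall contribution $(\nabla\times\delta v)\times B$ in the difference equation, so a Gronwall argument closes the estimate and completes the proof.
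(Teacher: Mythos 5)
There is a genuine gap in the central step, namely the uniform a priori bound. You claim a differential inequality of the form $\frac{d}{dt}X+\mu Y\le CX(1+Y)$ with $X=\|(u,v)\|_{H^{1/2}}^2$, $Y=\|(u,v)\|_{H^{3/2}}^2$, and that the nonlinear contributions are ``absorbable by the viscous dissipation on a short time interval.'' At the scaling-critical level this does not work for large data. Every nonlinear term you list scales exactly like the dissipation: e.g.\ $|(\Lambda^{1/2}(u\cdot\nabla u),\Lambda^{1/2}u)|\lesssim\|u\|_{\dot H^{1}}^2\|u\|_{\dot H^{3/2}}\lesssim\|u\|_{\dot H^{1/2}}\|u\|_{\dot H^{3/2}}^2$, and likewise the Hall commutator is bounded by $\|B\|_{\dot H^{3/2}}\|v\|_{\dot H^{3/2}}^2\lesssim(\|u\|_{\dot H^{1/2}}+\|v\|_{\dot H^{1/2}})\|v\|_{\dot H^{3/2}}^2$. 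Absorbing these into $\mu Y$ requires $\|(u,v)(t)\|_{\dot H^{1/2}}\le \mu/(2C)$, a condition that already fails at $t=0$ for large data; shrinking the time interval does not help because the critical norm does not start small. (Your stated inequality $CX(1+Y)$ is also not what these estimates produce --- they give $X^{1/2}Y$, and converting $X^{1/2}Y$ into $XY+Y$ by Young leaves a term $CY$ that cannot be absorbed either.) This is the same obstruction as for Navier--Stokes in $H^{1/2}$: a pointwise-in-time energy inequality at critical regularity only closes under a smallness assumption on the data.

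The missing idea --- and the crux of the paper's proof of \propref{secondproof} --- is to replace the pointwise energy inequality by a Duhamel-type estimate in the Chemin--Lerner space $\widetilde L^4_t(\dot H^1)$, see \eqref{muchim} and \eqref{cool}. There the contribution of the data enters through $\bigl[\sum_{j}(1-e^{-4c2^{2j}t})^{1/2}(2^{j/2}\|\dot\Delta_j(u_0,B_0,v_0)\|_{L^2})^2\bigr]^{1/2}$, which tends to $0$ as $t\to0^+$ by dominated convergence \emph{even for large data}; all nonlinearities are then controlled by $\|(u,B,v)\|_{L^4_t(\dot H^1)}^2$, and the bootstrap closes from the quadratic inequality $Z\le C_3(\delta+Z^2)$ with $\delta$ small. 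Your other ingredients --- the reformulation \eqref{extend eq}, the elliptic recovery $\|B\|_{\dot H^{s+1}}\lesssim\|u\|_{\dot H^s}+\|v\|_{\dot H^s}$, the cancellation $((\Lambda^{1/2}w)\times B)\cdot\Lambda^{1/2}w=0$ for the quasilinear Hall term, and the $L^2$-level uniqueness argument --- all match the paper and are sound; but without the vanishing-as-$t\to0$ mechanism for a scale-invariant space-time norm, the uniform lifespan for large $H^{1/2}\times H^{3/2}$ data is not established.
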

\begin{rem}\upshape
Because $\dot{B}^s_{2,1}(\mathbb{R}^3)$ and $H^s(\mathbb{R}^3)$ are not embedded into each other when $s>0$, \thmref{thm2} is different from the local well-posedness result in $\dot{B}^{\frac{1}{2}}_{2,1}(\mathbb{R}^3)\times (\dot{B}^{\frac{1}{2}}_{2,1}(\mathbb{R}^3)\cap\dot{B}^{\frac{3}{2}}_{2,1}(\mathbb{R}^3))$ mentioned above (see \cite[Theorem 2.2]{MR4193644}). 
\end{rem}
To the best of our knowledge, there are no available global well-posedness result concerning \eqref{main eq} with $B_0\in H^{\frac{3}{2}}(\mathbb{R}^3)$ for the general $\mu$ and $\nu$, so the natural next step is to extend the local strong solutions constructed in \thmref{thm1} globally in time. Actually, we find that this can be achieved for small initial data. Furthermore, we obtain the optimal time-decay rates of solutions.
\begin{thm}\label{thm3}
	Assume that the initial data $(u_0,B_0)$ satisfies the conditions in \thmref{thm1}. There exists a small positive constant $c_0=c_0(\mu,\nu,\kappa,\sigma)$ such that if 
	\begin{equation}\label{smallcon}
\|u_0\|_{H^{\frac{1}{2}+\sigma}}+\|B_0\|_{H^{\frac{3}{2}}}\leq c_0,
	\end{equation}then the system \eqref{main eq} has a unique global solution $(u,B)$ with  
\begin{equation}
	\begin{split}
		u&\in C([0,\infty);H^{\frac{1}{2}+\sigma}(\mathbb{R}^3)),\quad\nabla u\in L^2(0,\infty;H^{\frac{1}{2}+\sigma}(\mathbb{R}^3)),\\
		B&\in C([0,\infty);H^{\frac{3}{2}}(\mathbb{R}^3)),\quad\nabla B\in L^2(0,\infty;H^{\frac{3}{2}}(\mathbb{R}^3)).
	\end{split}
\end{equation}Moreover, if in addition $(u_0,B_0)\in\dot{B}^{-\gamma}_{2,\infty}(\mathbb{R}^3)$ for some $ \gamma\in[0,\frac{5}{2}]$, then it holds that for all $t\geq 1$,
\begin{equation}\label{decay}
	\|\Lambda^s u(t)\|_{L^2}+\|\Lambda^s B(t)\|_{L^2}\lesssim (1+t)^{-\frac{s+\gamma}{2}}\quad\text{if}\quad s\geq 0.
\end{equation}
\end{thm}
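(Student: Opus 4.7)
The plan is to start from the local solution of \thmref{thm1}, close a continuation argument through an a priori estimate using the smallness hypothesis, and then derive the decay by first propagating the $\dot{B}^{-\gamma}_{2,\infty}$ norm and finally by the Schonbek Fourier-splitting method. For the global existence, I would close an inequality of the form
\[
\frac{d}{dt}E(t)+c\,D(t)\leq C\sqrt{E(t)}\,D(t),
\]
with $E(t):=\|u\|_{H^{1/2+\sigma}}^2+\|B\|_{H^{3/2}}^2$ and $D(t):=\|\nabla u\|_{H^{1/2+\sigma}}^2+\|\nabla B\|_{H^{3/2}}^2$. All nonlinear contributions from $u\cdot\nabla u$, $B\cdot\nabla B$, $\nabla\times(u\times B)$ and the Hall term can be cast in this quadratic-in-dissipation form by recycling the paraproduct decompositions and commutator estimates of \thmref{thm1}, with one factor $\sqrt{E}$ separated off; crucially, the $L^2$-cancellation $\int \nabla\times((\nabla\times B)\times B)\cdot B\,dx=0$ feeds into all higher estimates via commutators. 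Under \eqref{smallcon} with $c_0$ so small that $C\sqrt{2}c_0\leq c/2$, a bootstrap yields $E(t)\leq 2E(0)$ and $\int_0^\infty D(s)\,ds\lesssim E(0)$, extending the local solution globally in the prescribed class.

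For the decay rates I would next show the uniform control
\[
\mathcal{N}(t):=\|u(t)\|_{\dot{B}^{-\gamma}_{2,\infty}}+\|B(t)\|_{\dot{B}^{-\gamma}_{2,\infty}}\leq C,\qquad t\geq 0.
\]
Applying $\dot\Delta_j$ to the Duhamel formula and using $\|e^{t\Delta}\dot\Delta_j f\|_{L^2}\lesssim e^{-c2^{2j}t}\|\dot\Delta_j f\|_{L^2}$, this reduces to bounding the nonlinearities in $L^1_t(\dot{B}^{-\gamma}_{2,\infty})$. The divergence-free conditions rewrite $u\cdot\nabla u=\Div(u\otimes u)$, $\nabla\times(u\times B)=\Div(B\otimes u-u\otimes B)$, and the Hall term as $\nabla\times\Div(B\otimes B)$ modulo an exact gradient absorbed in the pressure; each nonlinearity thus carries at least one free derivative that is traded for a Besov-product estimate of the quadratic expression, closed via the uniform bound on $E(t)$ and the time-integrability of $D(t)$.

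With $\mathcal{N}(t)\leq C$ in hand, the decay follows from a Schonbek-type Fourier-splitting argument in the spirit of \cite{MR3097244}. Testing the system against $\Lambda^{2s}(u,B)$ and absorbing the nonlinear remainder, of order $\sqrt{E(t)}\|\Lambda^{s+1}(u,B)\|_{L^2}^2$, into the dissipation via smallness, I obtain
\[
\frac{d}{dt}E_s(t)+c\int_{\mathbb{R}^3}|\xi|^{2(s+1)}\bigl(|\widehat u|^2+|\widehat B|^2\bigr)d\xi\leq 0,\qquad E_s(t):=\|\Lambda^s u\|_{L^2}^2+\|\Lambda^s B\|_{L^2}^2.
\]
Splitting the dissipation integral over $S(t)=\{|\xi|\leq g(t)\}$ with $g(t)^2=\alpha/(1+t)$, $\alpha>s+\gamma$, and bounding the low-frequency mass by $\int_{|\xi|\leq g(t)}|\xi|^{2s}(|\widehat u|^2+|\widehat B|^2)d\xi\lesssim g(t)^{2(s+\gamma)}\mathcal{N}(t)^2$, yields the differential inequality $\frac{d}{dt}E_s+\tfrac{\alpha}{1+t}E_s\lesssim (1+t)^{-(s+\gamma)-1}$, which integrates to $E_s(t)\lesssim (1+t)^{-(s+\gamma)}$. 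For indices $s$ beyond the range where the remainder is immediately absorbed, one proceeds by induction on $s$, feeding previously obtained lower-order decay into the treatment of the commutator terms.

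The hardest part is the propagation of $\mathcal{N}(t)$: when $\gamma$ approaches $5/2$, the Hall nonlinearity $\nabla\times\Div(B\otimes B)$ forces control of $B\otimes B$ in a Besov space of regularity as low as $-1/2$, so the high--high paraproduct interactions are not tame under the $H^{3/2}$ norm of $B$ alone. Closing this step requires exploiting the interpolation between the $L^\infty_t(H^{3/2})$ uniform bound and the $L^2_t(H^{5/2})$ dissipation obtained in Step 1, together with the divergence structure that removes the outermost derivative.
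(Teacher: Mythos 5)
Your overall architecture coincides with the paper's: a small-data energy closure, propagation of the $\dot{B}^{-\gamma}_{2,\infty}$ norm, and decay extracted from low-frequency control. Two of your choices are genuine (and workable) alternatives. For global existence you close a pointwise differential inequality $\frac{d}{dt}E+cD\leq C\sqrt{E}\,D$, whereas the paper recycles the time-integrated estimate \eqref{gg2} and bootstraps $\mathcal{E}(t)\leq C_4(\mathcal{E}(0)+\mathcal{E}^2(t))$; your version does work, but only if in each product law you systematically pair the low-regularity factor with $\sqrt{E}$ and the high-regularity factor with $\sqrt{D}$ (using that $D$ controls $\|u\|_{\dot{H}^s}$ for $1\leq s\leq\frac{3}{2}+\sigma$ and $\|B\|_{\dot{H}^s}$ for $1\leq s\leq\frac{5}{2}$); interpolating between $\|B\|_{\dot{H}^{3/2}}$ and $\|B\|_{\dot{H}^{5/2}}$ as in the paper's local estimates would instead produce terms $E^aD^b$ with $b\neq1$ that cannot be absorbed. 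For the decay, your Schonbek Fourier-splitting is equivalent to the paper's route, which uses the interpolation $\|\Lambda^s f\|_{L^2}\lesssim\|f\|_{\dot{B}^{-\gamma}_{2,\infty}}^{\frac{1}{s+\gamma+1}}\|\Lambda^{s+1}f\|_{L^2}^{\frac{s+\gamma}{s+\gamma+1}}$ to obtain a Lyapunov inequality $\frac{d}{dt}E_s+cE_s^{1+\frac{1}{s+\gamma}}\leq0$.

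The genuine gap is in the propagation of $\mathcal{N}(t)$ for $\gamma\in[\frac{3}{2},\frac{5}{2}]$. For $\gamma<\frac{3}{2}$ one can place one factor of each bilinear term in $\dot{B}^{-\gamma}_{2,\infty}$ via \lemref{product3} and close by Gronwall, as in \eqref{a111}; but for $\gamma\geq\frac{3}{2}$ the remainder estimate in \lemref{product1} requires the sum of regularities $-\gamma+\frac{3}{2}$ to be positive, so no such product law is available and $\mathcal{N}$ cannot appear linearly under the time integral. One is forced to bound the nonlinearities directly, e.g. $\|\nabla\times((\nabla\times B)\times B)\|_{\dot{B}^{-\gamma}_{2,\infty}}\lesssim\|B\|_{\dot{H}^{\frac{9-2\gamma}{4}}}\|B\|_{\dot{H}^{\frac{5-2\gamma}{4}}}$, and the closure \eqref{xingj} then demands this quantity in $L^1_t$. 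The uniform bound $\mathcal{E}\lesssim\mathcal{E}(0)$ and the dissipation $\int_0^\infty D\,dt<\infty$ only place it in $L^\infty_t\cap L^2_t$, so your proposed fix (interpolating $L^\infty_t(H^{3/2})$ against $L^2_t(H^{5/2})$) cannot supply the missing integrability. What actually closes this case in the paper is a bootstrap in $\gamma$: since $(u_0,B_0)\in\dot{B}^{-5/4}_{2,\infty}$ by interpolation with $L^2$, one first runs the entire argument for $\gamma'=\frac{5}{4}<\frac{3}{2}$, obtains the decay rates \eqref{decay} with $\gamma'=\frac{5}{4}$, and only then uses these rates (as in \eqref{hhaodff}) to show the forcing is $O((1+t)^{-\frac{5-\gamma}{2}})\in L^1_t$. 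Your induction is on the derivative index $s$, not on $\gamma$, so this step is missing. A minor secondary point: continuing the local solution requires a life-span from \thmref{thm1} that is uniform along the trajectory, which depends on the frequency profile of $B(t_0)$ (the third condition in \eqref{fd2}) and not merely on its norm; this is why the paper invokes the uniform $\widetilde{L}^\infty(\dot{H}^{3/2})$ bound at that stage.
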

\begin{rem}\upshape
The time-decay rates \eqref{decay} are optimal in the sense that they concide with that of the heat semi-group.
\end{rem}
\begin{rem}\upshape
From the proof of \thmref{thm1}, we see that the solution $(u,B)$ belongs to all Sobolev spaces after $t>0$, so the time-decay rates \eqref{decay} make sense.
\end{rem}
\begin{rem}\upshape
Actually, we do not need to impose additional regularity on initial data when $\gamma=0$ because $L^2(\mathbb{R}^3)\hookrightarrow\dot{B}^{0}_{2,\infty}(\mathbb{R}^3)$.
\end{rem}
The rest of the paper is organized as follows. In section \ref{sec2}, we recall some basic facts on the Littlewood-Paley theory. Section \ref{sec3}, Section \ref{sec4} and Section \ref{section3} are devoted to the proof of \thmref{thm1}, \thmref{thm2} and \thmref{thm3}, respectively.\\[10pt] 
\textbf{Notation: }Throughout the paper, $C$ stands for a generic positive constant, which may vary on different lines. For brevity, we sometimes use $a\lesssim b$ to replace $a\leq Cb$. The notation $a\approx b$ means that both $a\lesssim b$ and $b\lesssim a$ hold. For any two operators $A$ and $B$, we denote by $[A,B]=AB-BA$ the commutator between $A$ and $B$. For any interval $I$ of $\mathbb{R}$, we denote by $C(I;X)$ the set of continuous functions on $I$ with values in $X$. For $p\in[1,+\infty]$, $L^p(I;X)$ denotes the set of measurable functions $u:I\rightarrow X$ such that $t\rightarrow \|u(t)\|_X$ belongs to $L^p(I)$. Also, we use the shorthand notation $L_T^p(X)$ for $L^p(0,T;X)$.

\section{Preliminaries}\label{sec2}
\quad In this section, we briefly introduce the Littlewood-Paley decomposition, the definition of the homogeneous Besov space and some related analysis tools. For more details, we refer readers to \cite{MR2768550}.\par 
Let $\varphi,\chi\in\mathcal{S}(\mathbb{R}^3)$ be two radial functions valued in the interval $[0,1]$ and supported in  $\mathcal{C}=\{\xi\in\mathbb{R}^3:\frac{3}{4}\leq|\xi|\leq\frac{8}{3}\}$ and  $\mathcal{B}=\{\xi\in\mathbb{R}^3:|\xi|\leq\frac{4}{3}\}$ respectively such that\begin{equation*} 
	\sum_{j\in\mathbb{Z}}\varphi(2^{-j}\xi)=1\,\,\,\,\text{in \,\,}\mathbb{R}^3 \backslash \{0\},
\end{equation*}and\begin{equation*} 
	\chi(\xi)+\sum_{j\geq 0}\varphi(2^{-j}\xi)=1\,\,\,\,\text{in \,\,}\mathbb{R}^3.
\end{equation*}The dyadic block $\dot{\Delta}_j$ and the low frequency cut-off operator  $\dot{S}_j$ with $j\in\mathbb{Z}$ are defined as follows\begin{equation*}
\begin{split}
\dot{\Delta}_j u&:=\varphi(2^{-j}D)u=2^{3j}\int_{\mathbb{R}^3}g(2^jy)u(x-y)dy\quad\text{with}\quad g=\mathcal{F}^{-1}\phi,\\
\dot{S}_j u&:=\chi(2^{-j}D)u=2^{3j}\int_{\mathbb{R}^3}\tilde{g}(2^jy)u(x-y)dy\quad\text{with}\quad \tilde{g}=\mathcal{F}^{-1}\chi.
\end{split}
\end{equation*}We denote by $\mathcal{S}_h^\prime(\mathbb{R}^3)$ the set of tempered distribution $u$ such that\begin{equation*}
	\lim_{j\rightarrow-\infty}\|\dot{S}_ju\|_{L^\infty}=0,
\end{equation*}then the following decompositions hold for all $u\in\mathcal{S}_h^\prime(\mathbb{R}^3)$:\begin{equation*}\begin{split}
	u=\sum_{j\in\mathbb{Z}}\dot{\Delta}_j u\quad\text{and}\quad
		\dot{S}_j u=\sum_{j^\prime\leq j-1}\dot{\Delta}_{j^\prime} u.
\end{split}
\end{equation*}Moreover, it is easy to check that the Littlewood-Paley decomposition satisfies the almost orthogonality:\begin{equation}\label{orthogon}
	\dot{\Delta}_k\dot{\Delta}_j u=0\quad\text{if}\quad|k-j|\geq2,\quad\dot{\Delta}_k(\dot{S}_{j-1}u\dot{\Delta}_jv)=0\quad\text{if}\quad|k-j|\geq5.
\end{equation}\par
The following well-known Berstein inequalities will be frequently used in this sequel.
\begin{lem}[{\cite[Lemma 2.1]{MR2768550}}]\label{berstein}
	Let $0<r<R$. There exists a constant $C$ such that for any nonnegative integer $k$, any couple $(p,q)\in[1,\infty]^2$ with $q\geq p\geq1$, and any function $u$ in $L^p(\mathbb{R}^3)$, there holds\begin{equation*}
		\begin{split}
			&\operatorname{supp}\hat{u}\subset\{\xi\in\mathbb{R}^3:|\xi|\leq\lambda R \}\Longrightarrow\sup_{|\alpha|=k}\|\partial^\alpha u\|_{L^q}\leq C^{k+1}\lambda^{k+3(\frac{1}{p}-\frac{1}{q})}\|u\|_{L^p},\\ 
			&\operatorname{supp}\hat{u}\subset\{\xi\in\mathbb{R}^3:\lambda r\leq|\xi|\leq\lambda R \}\Longrightarrow C^{-k-1}\lambda^{k}\|u\|_{L^p}\leq\sup_{|\alpha|=k}\|\partial^\alpha u\|_{L^p}\leq C^{k+1}\lambda^{k}\|u\|_{L^p}. 
		\end{split}
	\end{equation*}
\end{lem}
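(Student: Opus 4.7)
The plan is to prove both inequalities by the same general scheme: rewrite $u$ (or its derivatives) as a convolution of the input with a smooth, explicit, scale-dependent kernel built from a frequency cutoff, apply Young's convolution inequality, and then use a change of variables to extract the correct power of $\lambda$. Throughout I will fix once and for all a smooth radial bump $\phi\in C_c^\infty(\mathbb{R}^3)$ equal to $1$ on $\{|\xi|\le R\}$ and supported in $\{|\xi|\le 2R\}$, and a smooth radial bump $\psi\in C_c^\infty(\mathbb{R}^3\setminus\{0\})$ equal to $1$ on $\{r\le|\xi|\le R\}$ and supported in a slightly wider annulus.

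For the first (ball) estimate, the support hypothesis gives $\widehat{u}(\xi)=\phi(\xi/\lambda)\widehat{u}(\xi)$, and hence for $|\alpha|=k$,
\[
\partial^\alpha u=g_{\alpha,\lambda}\ast u,\qquad g_{\alpha,\lambda}(x)=\lambda^{3+k}g_\alpha(\lambda x),\qquad g_\alpha:=\mathcal{F}^{-1}\bigl((i\xi)^\alpha\phi(\xi)\bigr).
\]
Young's convolution inequality with $1/s=1+1/q-1/p$ yields $\|\partial^\alpha u\|_{L^q}\le\|g_{\alpha,\lambda}\|_{L^s}\|u\|_{L^p}$, and a change of variables gives $\|g_{\alpha,\lambda}\|_{L^s}=\lambda^{k+3(1/p-1/q)}\|g_\alpha\|_{L^s}$. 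It therefore suffices to dominate $\|g_\alpha\|_{L^s}$ by $C^{k+1}$ uniformly in $\alpha$ with $|\alpha|=k$, which I will deduce by applying Leibniz's rule to $(i\xi)^\alpha\phi$, exploiting the Schwartz decay of $\phi$ and its derivatives, and using the elementary combinatorial bound $\binom{k}{\alpha}\le 3^k$.

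For the second (annulus) estimate, the upper bound is proved exactly as above with $\phi$ replaced by $\psi$ and $p=q$. The lower bound is the one genuinely new point. Because $\widehat{u}$ is supported in $\{\lambda r\le|\xi|\le\lambda R\}$, I can write $\widehat{u}=\psi(\cdot/\lambda)\widehat{u}$ and use the polynomial identity $|\xi|^{2k}=\bigl(\sum_{i=1}^3\xi_i^2\bigr)^k=\sum_{|\alpha|=k}\binom{k}{\alpha}\xi^{2\alpha}$ together with $|\xi|\ge\lambda r$ on the support to obtain
\[
\widehat{u}(\xi)=\sum_{|\alpha|=k}\binom{k}{\alpha}\frac{\xi^\alpha\,\psi(\xi/\lambda)}{|\xi|^{2k}}\cdot\xi^\alpha\widehat{u}(\xi).
\]
Taking the inverse Fourier transform expresses $u$ as a finite sum of convolutions of the rescaled Schwartz functions $\mathcal{F}^{-1}\bigl(\xi^\alpha|\xi|^{-2k}\psi(\xi)\bigr)$ against the derivatives $\partial^\alpha u$ (up to factors of $i^k$), and Young's inequality combined with the same scaling argument as before delivers the claimed $\|u\|_{L^p}\le C^{k+1}\lambda^{-k}\sup_{|\alpha|=k}\|\partial^\alpha u\|_{L^p}$.

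The main technical obstacle is controlling the constants so that the final bound reads $C^{k+1}$ rather than something like $k!\,C^k$, which would be useless when summing Littlewood--Paley series. This hinges on the combinatorial bound $\binom{k}{\alpha}\le 3^k$ together with uniform-in-$\alpha$ Schwartz estimates for the families $\{g_\alpha\}$ and $\{\mathcal{F}^{-1}(\xi^\alpha|\xi|^{-2k}\psi)\}$; these are obtained by Leibniz's rule, noting that on the annulus the function $\xi^\alpha/|\xi|^{2k}$ is smooth with each derivative bounded by a constant depending only on $r$, $R$, and the dimension, to a power absorbed into $C^{k+1}$. Beyond this careful bookkeeping of constants, the argument is a standard exercise in frequency localization and scaling.
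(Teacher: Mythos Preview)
The paper does not supply its own proof of this lemma; it is quoted verbatim from \cite[Lemma~2.1]{MR2768550} (Bahouri--Chemin--Danchin), and your argument is precisely the standard one given there---convolution with rescaled Schwartz kernels plus Young's inequality for the upper bounds, and the multinomial identity $|\xi|^{2k}=\sum_{|\alpha|=k}\binom{k}{\alpha}\xi^{2\alpha}$ on the annulus for the lower bound. Your bookkeeping of the constant $C^{k+1}$ is also in line with that reference, so the proposal is correct and matches the cited source.
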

Let us recall the definition of the homogeneous Besov space.\begin{definition}[{\cite{MR2768550}}]
	Let $s\in\mathbb{R}$ and $1\leq p,r\leq\infty$, the homogeneous Besov space $\dot{B}^{s}_{p,r}(\mathbb{R}^3)$ is defined by\begin{equation*}
		\dot{B}^{s}_{p,r}(\mathbb{R}^3):=\{u\in\mathcal{S}^\prime_h(\mathbb{R}^3):\|u\|_{\dot{B}^s_{p,r}}<\infty\}
	\end{equation*}with\begin{equation*}
		\|u\|_{\dot{B}^s_{p,r}}:=\Big\|\Big(2^{js}\|\dot{\Delta}_j u\|_{L^p}\Big)_{j\in\mathbb{Z} }\Big\|_{\ell^r(\mathbb{Z})}.
	\end{equation*}
\end{definition}
\begin{rem}\upshape
	It is easy to verify that the norm $\|\cdot\|_{\dot{B}^s_{2,2}}$ and the classical homogeneous Sobolev norm $\|\cdot\|_{\dot{H}^s}:=\|\Lambda^s\cdot\|_{L^2}$ with $\Lambda^s:=(-\Delta)^{\frac{s}{2}}$ are equivalent, so $\dot{B}^s_{2,2}(\mathbb{R}^3)$ coincides with $\dot{H}^s(\mathbb{R}^3)$.
\end{rem} 
\begin{lem}[{\cite{MR2768550}}]\label{prop} Let $s\in\mathbb{R}$ and $1\leq p,r\leq \infty$, the following properties hold true:
	\begin{enumerate} 
		\item[(i)] Derivatives: for any $k$-th order derivative, it holds that\begin{equation*}
			\sup_{|\alpha|=k}\|\partial^\alpha u\|_{\dot{{B}}^{s}_{p,r}}\approx \|u\|_{\dot{{B}}^{s+k}_{p,r}}.
		\end{equation*}
	\item[(ii)] Embedding: for $1\leq p\leq \tilde{p}\leq \infty$ and $1\leq r\leq \tilde{r}\leq \infty$, one has\begin{equation*}\begin{split}
	&\dot{{B}}^{s}_{p,r}(\mathbb{R}^3)\hookrightarrow \dot{{B}}^{s-3(\frac{1}{p}-\frac{1}{\tilde{p}})}_{\tilde{p},\tilde{r}}(\mathbb{R}^3),\quad \dot{{B}}^{\frac{3}{p}}_{p,1}(\mathbb{R}^3)\hookrightarrow L^\infty(\mathbb{R}^3),\\
	&\quad\text{and}\quad L^p(\mathbb{R}^3)\hookrightarrow\dot{{B}}^{0}_{p,\infty}(\mathbb{R}^3). 
		\end{split}
	\end{equation*}
		\item[(iii)] Interpolation: for any  $s_1<s_2$ and $0<\theta<1$, there holds\begin{align}
				&\|u\|_{\dot{B}^{\theta s_1+(1-\theta)s_2}_{p,r}}\leq \|u\|^\theta_{\dot{B}^{s_1}_{p,r}}\|u\|^{1-\theta}_{\dot{B}^{s_2}_{p,r}},\label{inter1}\\
				&\|u\|_{\dot{B}^{\theta s_1+(1-\theta)s_2}_{p,1}}\leq\frac{C}{s_2-s_1}\Big(\frac{1}{\theta}+\frac{1}{1-\theta}\Big) \|u\|^\theta_{\dot{B}^{s_1}_{p,\infty}}\|u\|^{1-\theta}_{\dot{B}^{s_2}_{p,\infty}}.\label{inter2}
		\end{align}
		\item [(iv)] Action of multiplier: Let $f$ be a smooth function on $\mathbb{R}^n\backslash\{0\}$ which is homogeneous of degree $m$, then we have\begin{equation*}
			\|f(D)u\|_{\dot{B}^{s-m}_{p,r}}\lesssim\|u\|_{\dot{B}^{s}_{p,r}}.
		\end{equation*}
	\end{enumerate}		
\end{lem}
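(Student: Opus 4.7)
The lemma collects four standard properties of the homogeneous Besov spaces. In each case the strategy is the same: reduce to a statement about a single dyadic block $\dot{\Delta}_ju$, whose Fourier support lies in an annulus of size $2^j$, apply the appropriate Bernstein inequality from \lemref{berstein}, and reassemble the estimate by taking the $\ell^r(\mathbb{Z})$-norm weighted by the corresponding power of $2^j$.

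For (i), \lemref{berstein} gives $\sup_{|\alpha|=k}\|\partial^\alpha\dot{\Delta}_ju\|_{L^p}\approx 2^{jk}\|\dot{\Delta}_ju\|_{L^p}$, and multiplying by $2^{js}$ and taking the $\ell^r$-norm in $j$ yields the claimed equivalence. For (ii), the general embedding is obtained by combining the Bernstein bound $\|\dot{\Delta}_ju\|_{L^{\tilde p}}\lesssim 2^{3j(\frac{1}{p}-\frac{1}{\tilde p})}\|\dot{\Delta}_ju\|_{L^p}$ with the monotonicity $\ell^r\hookrightarrow\ell^{\tilde r}$; the embedding $\dot{B}^{3/p}_{p,1}\hookrightarrow L^\infty$ comes from the telescoping estimate $\|u\|_{L^\infty}\le \sum_j\|\dot{\Delta}_ju\|_{L^\infty}\lesssim\sum_j 2^{3j/p}\|\dot{\Delta}_ju\|_{L^p}$; and $L^p\hookrightarrow\dot{B}^{0}_{p,\infty}$ follows since each $\dot{\Delta}_j$ is convolution with a rescaled but uniformly $L^1$-bounded kernel, giving $\sup_j\|\dot{\Delta}_ju\|_{L^p}\lesssim\|u\|_{L^p}$.

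For (iii), the first interpolation inequality \eqref{inter1} is an immediate consequence of Hölder's inequality in $\ell^r(\mathbb{Z})$ applied to the factorisation
\[
2^{j(\theta s_1+(1-\theta)s_2)}\|\dot{\Delta}_ju\|_{L^p}=\bigl(2^{js_1}\|\dot{\Delta}_ju\|_{L^p}\bigr)^{\theta}\bigl(2^{js_2}\|\dot{\Delta}_ju\|_{L^p}\bigr)^{1-\theta}
\]
with conjugate exponents $1/\theta$ and $1/(1-\theta)$. The sharper inequality \eqref{inter2}, with its explicit constant $1/(s_2-s_1)$, requires a split-and-optimise argument. Setting $A_i:=\|u\|_{\dot{B}^{s_i}_{p,\infty}}$ yields $\|\dot{\Delta}_ju\|_{L^p}\le\min(A_12^{-js_1},A_22^{-js_2})$, and for any $N\in\mathbb{Z}$ one splits
\[
\|u\|_{\dot{B}^{\theta s_1+(1-\theta)s_2}_{p,1}}\le A_1\sum_{j>N}2^{j((1-\theta)(s_2-s_1))}+A_2\sum_{j\le N}2^{-j\theta(s_2-s_1)},
\]
where both geometric series converge with partial-sum constants of order $1/((1-\theta)(s_2-s_1))$ and $1/(\theta(s_2-s_1))$ respectively; choosing $N$ so that the two contributions balance (i.e.\ $2^{N(s_2-s_1)}\sim A_2/A_1$) produces the advertised constant $\frac{C}{s_2-s_1}\bigl(\frac{1}{\theta}+\frac{1}{1-\theta}\bigr)$ together with the product $A_1^\theta A_2^{1-\theta}$.

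For (iv), pick an enlarged cut-off $\tilde\varphi\in C_c^\infty(\mathbb{R}^n\setminus\{0\})$ identically $1$ on the support of $\varphi$, so that $f(D)\dot{\Delta}_ju=m_j(D)\dot{\Delta}_ju$ with $m_j(\xi)=f(\xi)\tilde\varphi(2^{-j}\xi)$; homogeneity of $f$ gives $m_j(\xi)=2^{jm}h(2^{-j}\xi)$ for the Schwartz function $h(\eta):=f(\eta)\tilde\varphi(\eta)$, whose associated convolution kernel is $2^{j(m+n)}\check h(2^j\cdot)$, and Young's inequality yields $\|f(D)\dot{\Delta}_ju\|_{L^p}\lesssim 2^{jm}\|\check h\|_{L^1}\|\dot{\Delta}_ju\|_{L^p}$. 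Weighting by $2^{j(s-m)}$ and taking the $\ell^r$-norm in $j$ completes the proof. The only genuinely delicate step is the sharp interpolation \eqref{inter2}: tracking the $1/(s_2-s_1)$ factor and the singular behaviour as $\theta\to 0^+,1^-$ is exactly what the explicit split-and-optimise argument above provides, while the remaining items are essentially one-line consequences of Bernstein's inequality combined with elementary $\ell^r$-summation.
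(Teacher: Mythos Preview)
The paper does not prove this lemma at all; it merely quotes the statement from \cite{MR2768550} without argument. So there is nothing to compare your approach against, and your sketch of (i), (ii), (iv) and the first part of (iii) is the standard proof and is fine.

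There is, however, a genuine slip in your argument for \eqref{inter2}. You split
\[
\|u\|_{\dot{B}^{\theta s_1+(1-\theta)s_2}_{p,1}}\le A_1\sum_{j>N}2^{j(1-\theta)(s_2-s_1)}+A_2\sum_{j\le N}2^{-j\theta(s_2-s_1)},
\]
but with $s_2>s_1$ both of these geometric series \emph{diverge}: the first has ratio $>1$ summed over $j>N$, the second has ratio $<1$ summed over $j\le N$ (i.e.\ towards $-\infty$). The correct split is the opposite one: use the $A_1$ bound for $j\le N$ and the $A_2$ bound for $j>N$, giving
\[
A_1\sum_{j\le N}2^{j(1-\theta)(s_2-s_1)}+A_2\sum_{j> N}2^{-j\theta(s_2-s_1)}
\lesssim \frac{A_1\,2^{N(1-\theta)(s_2-s_1)}}{(1-\theta)(s_2-s_1)}+\frac{A_2\,2^{-N\theta(s_2-s_1)}}{\theta(s_2-s_1)}.
\]
Choosing $N$ with $2^{N(s_2-s_1)}\sim A_2/A_1$ then balances the two terms and produces the stated constant $\frac{C}{s_2-s_1}\bigl(\frac{1}{\theta}+\frac{1}{1-\theta}\bigr)A_1^\theta A_2^{1-\theta}$. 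With this correction your argument is complete.
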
\par
We need the following mixed time-space norm, which was introduced by Chemin in \cite{MR1753481}.
\begin{definition}
Let $T>0$, $s\in\mathbb{R}$ and $1\leq p,r,q\leq\infty$. For any tempered distribution $u$ on $(0,T)\times \mathbb{R}^3$, we set\begin{equation*}
		\|u\|_{\widetilde{L}^q_T(\dot{B}^s_{p,r})}:=\Big\|\Big(2^{js}\|\dot{\Delta}_j u\|_{L_T^q(L^p)}\Big)_{j\in\mathbb{Z}}\Big\|_{\ell^r(\mathbb{Z})}.
	\end{equation*}
\end{definition}
\begin{rem}\upshape\label{minkow}
By the Minkowski's inequality, one easily observe that\begin{equation*}
		\begin{cases}
			\|u\|_{\widetilde{L}^q_T(\dot{B}^s_{p,r})}\leq\|u\|_{{L}^q_T(\dot{B}^s_{p,r})}\quad \text{if}\quad q\leq r,  \\
			\|u\|_{{L}^q_T(\dot{B}^s_{p,r})}\leq\|u\|_{\widetilde{L}^q_T(\dot{B}^s_{p,r})}\quad \text{if}\quad q\geq r.
		\end{cases}
	\end{equation*}
\end{rem}
\par 
To obtain the nonlinear estimates in Besov spaces, we shall make use of the following Bony's decomposition from \cite{MR631751} in the homogeneous context:\begin{equation}\label{BD}
	uv=\dot{T}_uv+\dot{T}_vu+\dot{R}(u,v),
\end{equation}where \begin{equation*}
	\dot{T}_uv:=\sum_{j\in\mathbb{Z}}\dot{S}_{j-1}u\dot{\Delta}_jv\quad\text{and}\quad\dot{R}(u,v):=\sum_{j\in\mathbb{Z}}\sum_{|j^\prime-j|\leq 1}\dot{\Delta}_ju{\dot{\Delta}}_{j^\prime}v.
\end{equation*}The operators $\dot{T}$ and $\dot{R}$ above are called paraproduct operator and remainder operator, respectively.
\begin{lem}[\cite{MR2768550}]\label{product1}
	Let $s\in\mathbb{R}$, $t<0$ and $1\leq p,r\leq\infty$, there exists a constant $C$ such that
	\begin{equation}
		\begin{split}
			\|\dot{T}_uv\|_{\dot{B}^{s}_{p,r}}&\leq C \|u\|_{{L^{\infty}}}\|v\|_{\dot{B}^{s}_{p,r}}\quad\text{and}\quad\|\dot{T}_uv\|_{\dot{B}^{s+t}_{p,r}}\leq C\|u\|_{\dot{B}^{t}_{\infty,\infty}}\|v\|_{\dot{B}^{s}_{p,r}}.
		\end{split}
	\end{equation}
Let $s_1,s_2\in\mathbb{R}$ and $1\leq p,p_1,p_2,r,r_1,r_2\leq\infty$ with $\frac{1}{p}=\frac{1}{p_1}+\frac{1}{p_2}$ and $\frac{1}{r}= \frac{1}{r_1}+\frac{1}{r_2}$. If $s_1+s_2>0$, there exists a constant $C$ such that\begin{equation}
	\begin{split}
		\|\dot{R}(u,v)\|_{\dot{B}^{s_1+s_2}_{p,r}}\leq C\|u\|_{\dot{B}^{s_1}_{p_1,r_1}}\|v\|_{\dot{B}^{s_2}_{p_2,r_2}}.
	\end{split}
\end{equation}If $r=1$ and $s_1+s_2=0$, there exists a constant $C$ such that\begin{equation}\label{zero}
	\begin{split}
		\|\dot{R}(u,v)\|_{\dot{B}^{0}_{p,\infty}}\leq C\|u\|_{\dot{B}^{s_1}_{p_1,r_1}}\|v\|_{\dot{B}^{s_2}_{p_2,r_2}}.
	\end{split}
\end{equation}
\end{lem}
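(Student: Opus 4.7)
The plan is to prove each of the four inequalities by inserting the Littlewood-Paley decomposition into the paraproduct/remainder and reducing everything to a discrete summation estimate. The central inputs are the Bernstein inequalities from \lemref{berstein} together with the spectral-support properties recorded in \eqref{orthogon}.

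\textbf{Paraproduct bounds.} For the paraproduct I would use \eqref{orthogon} to truncate
\begin{equation*}
\dot{\Delta}_k \dot{T}_u v = \sum_{|j-k|\leq 4} \dot{\Delta}_k(\dot{S}_{j-1}u\,\dot{\Delta}_j v),
\end{equation*}
so only a bounded number of $j$ contribute to each dyadic block. Hölder in $x$ yields
\begin{equation*}
\|\dot{\Delta}_k(\dot{S}_{j-1}u\,\dot{\Delta}_j v)\|_{L^p} \leq \|\dot{S}_{j-1}u\|_{L^\infty}\,\|\dot{\Delta}_j v\|_{L^p}.
\end{equation*}
For the first inequality the bound $\|\dot{S}_{j-1}u\|_{L^\infty}\leq \|u\|_{L^\infty}$ is immediate, and after multiplying by $2^{ks}$ and taking the $\ell^r$ norm one recovers the target $\dot{B}^s_{p,r}$ norm of $v$ up to a shift by at most four indices. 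For the second one I would exploit $t<0$ to sum a geometric series,
\begin{equation*}
\|\dot{S}_{j-1}u\|_{L^\infty}\leq \sum_{j'\leq j-2}\|\dot{\Delta}_{j'}u\|_{L^\infty} \leq \|u\|_{\dot{B}^t_{\infty,\infty}}\sum_{j'\leq j-2}2^{-j't}\lesssim 2^{-jt}\|u\|_{\dot{B}^t_{\infty,\infty}},
\end{equation*}
which is exactly where the sign assumption on $t$ enters; multiplying by $2^{k(s+t)}$ and taking the $\ell^r$ norm in $k$ then yields the desired bound.

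\textbf{Remainder bounds for $s_1+s_2>0$.} For the remainder I would exploit that whenever $|j-j'|\leq 1$ the product $\dot{\Delta}_j u\,\dot{\Delta}_{j'} v$ is spectrally supported in a ball of radius $O(2^j)$, which forces $\dot{\Delta}_k(\dot{\Delta}_j u\,\dot{\Delta}_{j'}v)=0$ for $k\geq j+N_0$ with a universal $N_0$. Setting $v_j:=\sum_{|j-j'|\leq 1}\dot{\Delta}_{j'}v$, Hölder with $1/p_1+1/p_2=1/p$ gives
\begin{equation*}
\|\dot{\Delta}_k\dot{R}(u,v)\|_{L^p}\lesssim \sum_{j\geq k-N_0}\|\dot{\Delta}_j u\|_{L^{p_1}}\|v_j\|_{L^{p_2}}.
\end{equation*}
Multiplying by $2^{k(s_1+s_2)}$ produces a discrete convolution
\begin{equation*}
2^{k(s_1+s_2)}\|\dot{\Delta}_k\dot{R}(u,v)\|_{L^p}\lesssim \sum_{j\geq k-N_0}2^{(k-j)(s_1+s_2)}\,c^{(1)}_j c^{(2)}_j,
\end{equation*}
where $c^{(1)}_j:=2^{js_1}\|\dot{\Delta}_j u\|_{L^{p_1}}$ and $c^{(2)}_j:=2^{js_2}\|v_j\|_{L^{p_2}}$. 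Hölder in the index $j$ with $1/r_1+1/r_2=1/r$ places $(c^{(1)}_j c^{(2)}_j)$ in $\ell^r$, and Young's inequality for discrete convolutions, combined with the summability $\sum_{\ell\geq -N_0}2^{-\ell(s_1+s_2)}<\infty$ (this is exactly where $s_1+s_2>0$ is used), produces the desired $\dot{B}^{s_1+s_2}_{p,r}$ bound.

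\textbf{The endpoint $s_1+s_2=0$, $r=1$.} This is where I expect the only genuine subtlety. The geometric weight $2^{(k-j)(s_1+s_2)}$ collapses to $1$, so Young's inequality is no longer available to transfer $\ell^r$ summability back in $k$. Instead I would apply Hölder directly to the series, using $1/r_1+1/r_2=1$, to get
\begin{equation*}
\sum_{j\geq k-N_0}c^{(1)}_j c^{(2)}_j\leq \|c^{(1)}\|_{\ell^{r_1}}\|c^{(2)}\|_{\ell^{r_2}}=\|u\|_{\dot{B}^{s_1}_{p_1,r_1}}\|v\|_{\dot{B}^{s_2}_{p_2,r_2}},
\end{equation*}
uniformly in $k$, which is precisely the $\dot{B}^0_{p,\infty}$ bound for $\dot{R}(u,v)$. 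The reason the target norm must be weakened from $\ell^r$ to $\ell^\infty$ in the last coordinate is exactly that without the positive gap $s_1+s_2>0$ the convolution structure provides no $k$-summability, only a $k$-uniform bound.
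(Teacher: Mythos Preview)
Your argument is correct and is precisely the standard proof from \cite{MR2768550}; the paper itself does not prove this lemma but simply cites it as a known result. There is nothing to add.
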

We have the following product law in Besov spaces.
\begin{lem}\label{product3} Let $s>-\frac{3}{2}$, $1\leq r\leq\infty$ and $\eta,\theta>0$, it holds that
\begin{equation}\label{pr2}
	\begin{split}
\|fg\|_{\dot{B}^s_{2,r}}&\lesssim \|f\|_{\dot{B}^{\frac{3}{2}-\eta}_{2,\infty}}\|g\|_{\dot{B}^{s+\eta}_{2,r}}+\|g\|_{\dot{B}^{\frac{3}{2}-\theta}_{2,\infty}}\|f\|_{\dot{B}^{s+\theta}_{2,r}}.
	\end{split}
\end{equation}
\end{lem}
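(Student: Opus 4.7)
My plan is to prove \lemref{product3} via Bony's decomposition \eqref{BD}, namely
\[
fg = \dot{T}_f g + \dot{T}_g f + \dot{R}(f,g),
\]
and bound each of the three pieces separately using \lemref{product1}, combined with the Besov embedding
\[
\dot{B}^{\frac{3}{2}-\alpha}_{2,\infty}(\mathbb{R}^3)\hookrightarrow\dot{B}^{-\alpha}_{\infty,\infty}(\mathbb{R}^3),\qquad \alpha>0,
\]
from \lemref{prop}(ii). This embedding costs exactly $\frac{3}{2}$ derivatives in order to pass from an $L^2$-based index to an $L^\infty$-based one, and it is the mechanism by which the low-regularity factor enters the paraproduct estimates.

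For the first paraproduct I would apply the second inequality of \lemref{product1} with $t=-\eta<0$, placing the low-regularity factor into $\dot{B}^{-\eta}_{\infty,\infty}$, to obtain
\[
\|\dot{T}_f g\|_{\dot{B}^{s}_{2,r}}\lesssim \|f\|_{\dot{B}^{-\eta}_{\infty,\infty}}\|g\|_{\dot{B}^{s+\eta}_{2,r}}\lesssim \|f\|_{\dot{B}^{\frac{3}{2}-\eta}_{2,\infty}}\|g\|_{\dot{B}^{s+\eta}_{2,r}},
\]
which matches the first term on the right-hand side of \eqref{pr2}. Swapping the roles of $f$ and $g$ and taking $t=-\theta<0$ handles $\dot{T}_g f$ and produces the second term. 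This part imposes no restriction on $s$.

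The remainder $\dot{R}(f,g)$ is where the hypothesis $s>-\frac{3}{2}$ will appear. My idea is to first estimate $\dot{R}(f,g)$ in a space with $p=1$ and then re-embed into $\dot{B}^{s}_{2,r}$: by \lemref{prop}(ii) we have $\dot{B}^{s+\frac{3}{2}}_{1,r}(\mathbb{R}^3)\hookrightarrow\dot{B}^{s}_{2,r}(\mathbb{R}^3)$, and the first remainder estimate in \lemref{product1} applies with the parameters $p_1=p_2=2$ (so that $p=1$), $r_1=\infty$, $r_2=r$, $s_1=\frac{3}{2}-\eta$, $s_2=s+\eta$, provided $s_1+s_2=s+\frac{3}{2}>0$. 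This yields
\[
\|\dot{R}(f,g)\|_{\dot{B}^{s}_{2,r}}\lesssim \|\dot{R}(f,g)\|_{\dot{B}^{s+\frac{3}{2}}_{1,r}}\lesssim \|f\|_{\dot{B}^{\frac{3}{2}-\eta}_{2,\infty}}\|g\|_{\dot{B}^{s+\eta}_{2,r}},
\]
which is again absorbed by the first term of \eqref{pr2}.

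I do not anticipate a genuine obstacle: the proof amounts to bookkeeping with Bony's decomposition, and the threshold $s>-\frac{3}{2}$ is forced by (and only by) the strict positivity $s_1+s_2>0$ required in the remainder part of \lemref{product1}. The only mildly delicate point is the matching of the summation indices $r_1,r_2$, which is dictated by having $f$ in a space with third index $\infty$ and $g$ with third index $r$, so that $\frac{1}{r_1}+\frac{1}{r_2}=\frac{1}{r}$ holds automatically with $r_1=\infty$, $r_2=r$.
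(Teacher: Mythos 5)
Your proposal is correct and follows essentially the same route as the paper: Bony's decomposition, the two paraproduct bounds from \lemref{product1} combined with the embedding $\dot{B}^{\frac{3}{2}-\alpha}_{2,\infty}\hookrightarrow\dot{B}^{-\alpha}_{\infty,\infty}$, and the remainder estimated in $\dot{B}^{s+\frac{3}{2}}_{1,r}$ before re-embedding into $\dot{B}^{s}_{2,r}$, with $s>-\frac{3}{2}$ entering only through $s_1+s_2>0$. The sole (immaterial) difference is that the paper absorbs the remainder into the $\theta$-term rather than the $\eta$-term.
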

\begin{proof}
It follows from the Bony's decompostion that\begin{equation}
	fg=\dot{T}_fg+\dot{T}_gf+\dot{R}(f,g).
\end{equation}Since $s>-\frac{3}{2}$ and $\eta,\theta>0$, by \lemref{product1} and embeddings (see \lemref{prop} (ii)), we have\begin{equation}
\begin{split}
\|\dot{T}_fg\|_{\dot{B}^s_{2,r}}&\lesssim \|f\|_{\dot{B}^{-\eta}_{\infty,\infty}}\|g\|_{\dot{B}^{s+\eta}_{2,r}}\lesssim \|f\|_{\dot{B}^{\frac{3}{2}-\eta}_{2,\infty}}\|g\|_{\dot{B}^{s+\eta}_{2,r}},\\
\|\dot{T}_gf\|_{\dot{B}^s_{2,r}}&\lesssim \|g\|_{\dot{B}^{-\theta}_{\infty,\infty}}\|f\|_{\dot{B}^{s+\theta}_{2,r}}\lesssim \|g\|_{\dot{B}^{\frac{3}{2}-\theta}_{2,\infty}}\|f\|_{\dot{B}^{s+\theta}_{2,r}},\\
\|\dot{R}(f,g)\|_{\dot{B}^s_{2,r}}&\lesssim \|\dot{R}(f,g)\|_{\dot{B}^{s+\frac{3}{2}}_{1,r}}\lesssim \|g\|_{\dot{B}^{\frac{3}{2}-\theta}_{2,\infty}}\|f\|_{\dot{B}^{s+\theta}_{2,r}}.
\end{split}
\end{equation}This completes the proof of \eqref{pr2}.
\end{proof}
We establish the following commutator estimates, which plays a key role in the analysis of the Hall term.
\begin{lem}\label{commutator}
Let $s>-\frac{3}{2}$, $1\leq r\leq\infty$, $0<\eta<\frac{5}{2}$ and $\theta>0$, the following estimate holds true,
\begin{equation}\label{commues}
\|2^{js}\|[\dot{\Delta}_j,f]g\|_{L^2}\|_{\ell^r(\mathbb{Z})}\lesssim \|f\|_{\dot{B}^{\frac{5}{2}-\eta}_{2,r}}\|g\|_{\dot{B}^{s+\eta-1}_{2,r}}+\|g\|_{\dot{B}^{\frac{3}{2}-\theta}_{2,\infty}}\|f\|_{\dot{B}^{s+\theta}_{2,r}}.
\end{equation}
\end{lem}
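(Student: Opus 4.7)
My plan is to apply Bony's paraproduct decomposition simultaneously to $fg$ and to $f\dot{\Delta}_j g$. After regrouping one obtains
\begin{equation*}
[\dot{\Delta}_j,f]g = [\dot{\Delta}_j,\dot{T}_f]g + \bigl(\dot{\Delta}_j\dot{T}_g f - \dot{T}_{\dot{\Delta}_j g}f\bigr) + \bigl(\dot{\Delta}_j\dot{R}(f,g) - \dot{R}(f,\dot{\Delta}_j g)\bigr) =: I_j^1 + I_j^2 + I_j^3.
\end{equation*}
The principal commutator $I_j^1$ will generate the first term on the right of \eqref{commues}, while the lower-order pieces $I_j^2, I_j^3$ will generate the second.

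For $I_j^1$, the almost-orthogonality \eqref{orthogon} restricts the sum to $|k-j|\leq 4$, and writing $\dot{\Delta}_j u = 2^{3j}h(2^j\cdot)\ast u$ with $h\in\mathcal{S}$ I have the classical identity
\begin{equation*}
[\dot{\Delta}_j,\dot{S}_{k-1}f]\dot{\Delta}_k g(x) = 2^{3j}\int_{\mathbb{R}^3}h(2^j(x-y))\bigl(\dot{S}_{k-1}f(y)-\dot{S}_{k-1}f(x)\bigr)\dot{\Delta}_k g(y)\,dy.
\end{equation*}
A first-order mean-value estimate together with $\int 2^{3j}|y||h(2^jy)|\,dy\lesssim 2^{-j}$ produces $\|I_j^1\|_{L^2}\lesssim 2^{-j}\sum_{|k-j|\leq 4}\|\nabla\dot{S}_{k-1}f\|_{L^\infty}\|\dot{\Delta}_kg\|_{L^2}$. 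Bernstein plus the splitting $\|\nabla\dot{\Delta}_{k'}f\|_{L^\infty}\lesssim 2^{k'\eta}\cdot 2^{k'(5/2-\eta)}\|\dot{\Delta}_{k'}f\|_{L^2}$, summed as a convolution over $k'\leq k-2$ (summable since $\eta>0$), gives $\|\nabla\dot{S}_{k-1}f\|_{L^\infty}\lesssim 2^{k\eta}c_k\|f\|_{\dot{B}^{5/2-\eta}_{2,r}}$ with $(c_k)\in\ell^r$. Multiplying by $2^{js}$ and taking $\ell^r$ in $j$ produces $\|f\|_{\dot{B}^{5/2-\eta}_{2,r}}\|g\|_{\dot{B}^{s+\eta-1}_{2,r}}$.

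For $I_j^2$, both summands are controlled by placing a $g$-factor in $L^\infty$ via Bernstein: $\|\dot{\Delta}_{k'}g\|_{L^\infty}\lesssim 2^{k'\theta}\|g\|_{\dot{B}^{3/2-\theta}_{2,\infty}}$, whence $\|\dot{S}_{k-1}g\|_{L^\infty}\lesssim 2^{k\theta}\|g\|_{\dot{B}^{3/2-\theta}_{2,\infty}}$ (spectral support forces $|k-j|\leq 4$ for $\dot{\Delta}_j\dot{T}_gf$) and $\|\dot{\Delta}_j g\|_{L^\infty}\lesssim 2^{j\theta}\|g\|_{\dot{B}^{3/2-\theta}_{2,\infty}}$ for $\dot{T}_{\dot{\Delta}_j g}f$. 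In $I_j^3$, the remainder $\dot{R}(f,\dot{\Delta}_j g)$ reduces to three terms with $k'\in\{j-1,j,j+1\}$ and is treated identically; the high-frequency piece $\dot{\Delta}_j\dot{R}(f,g)$, nonzero only for $k\geq j-C$, is bounded by combining Bernstein $\|\dot{\Delta}_j\cdot\|_{L^2}\lesssim 2^{3j/2}\|\cdot\|_{L^1}$ with Cauchy--Schwarz and $\|\dot{\Delta}_k g\|_{L^2}\lesssim 2^{-k(3/2-\theta)}\|g\|_{\dot{B}^{3/2-\theta}_{2,\infty}}$, giving
\begin{equation*}
2^{js}\|\dot{\Delta}_j\dot{R}(f,g)\|_{L^2}\lesssim \|g\|_{\dot{B}^{3/2-\theta}_{2,\infty}}\sum_{k\geq j-C}2^{(j-k)(s+3/2)}\cdot 2^{k(s+\theta)}\|\dot{\Delta}_k f\|_{L^2};
\end{equation*}
the hypothesis $s>-3/2$ makes the weight $\ell^1$-summable in $k-j$ and Young's convolution inequality yields the desired $\|g\|_{\dot{B}^{3/2-\theta}_{2,\infty}}\|f\|_{\dot{B}^{s+\theta}_{2,r}}$ bound.

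The main obstacle I anticipate is the convolutional $\ell^r$ bookkeeping in the two summed-over-$k'$ steps for $I_j^1$ and the remainder piece of $I_j^3$: in each case one must extract a sequence of the form $2^{k\alpha}c_k$ with $(c_k)\in\ell^r$ from a weighted sum of $\|\dot{\Delta}_{k'}\cdot\|_{L^2}$, which requires recognizing the inner sum as a convolution with a summable geometric weight. The hypotheses $\eta>0$ (for $I_j^1$) and $s>-3/2$ (for $I_j^3$) are precisely what guarantees this summability; the upper bound $\eta<5/2$ merely ensures that the Besov norm on the right is a homogeneous regularity index below $5/2$.
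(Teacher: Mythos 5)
Your decomposition is the same Bony-type splitting the paper uses, and your treatments of $[\dot{\Delta}_j,\dot{T}_f]g$, of $\dot{\Delta}_j\dot{T}_gf$, of $\dot{\Delta}_j\dot{R}(f,g)$, and of the remainder $\dot{R}(f,\dot{\Delta}_jg)$ (a finite sum in $k$) are all sound. The gap is in the remaining piece $\dot{T}_{\dot{\Delta}_jg}f=\sum_{k}\dot{S}_{k-1}\dot{\Delta}_jg\,\dot{\Delta}_kf$. Here the spectral support forces $k\geq j-C$, i.e.\ the $f$-frequencies sit \emph{above} $j$, so after you put $\|\dot{\Delta}_jg\|_{L^\infty}\lesssim 2^{j\theta}\|g\|_{\dot{B}^{3/2-\theta}_{2,\infty}}$ you are left with
\begin{equation*}
2^{js}\|\dot{T}_{\dot{\Delta}_jg}f\|_{L^2}\lesssim \|g\|_{\dot{B}^{3/2-\theta}_{2,\infty}}\sum_{k\geq j-C}2^{(j-k)(s+\theta)}\,2^{k(s+\theta)}\|\dot{\Delta}_kf\|_{L^2},
\end{equation*}
and the kernel $2^{m(s+\theta)}$, $m\leq C$, is summable only if $s+\theta>0$. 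This is \emph{not} implied by the hypotheses $s>-\frac{3}{2}$, $\theta>0$; and the lemma is actually invoked in the paper with $(s,\theta)=(-\gamma,\frac{1}{2})$ for $\gamma$ up to $\frac{3}{2}$ (Proposition~\ref{neevolu}), where $s+\theta<0$. So your argument, as written, does not prove the stated lemma.

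The repair is to send this term into the \emph{first} product on the right of \eqref{commues} rather than the second: group $\dot{T}_{\dot{\Delta}_jg}f$ with $\dot{R}(\dot{\Delta}_jg,f)$ as $\sum_{j'\geq j-2}\dot{S}_{j'+2}\dot{\Delta}_jg\,\dot{\Delta}_{j'}f$, bound $\|\dot{\Delta}_jg\|_{L^\infty}\lesssim 2^{3j/2}\|\dot{\Delta}_jg\|_{L^2}$, and split $2^{j(s+3/2)}=2^{j(s+\eta-1)}\cdot 2^{j(5/2-\eta)}$, so that $g$ is measured in $\dot{B}^{s+\eta-1}_{2,\infty}$ and the convolution weight becomes $2^{(j-j')(5/2-\eta)}$ acting on $2^{j'(5/2-\eta)}\|\dot{\Delta}_{j'}f\|_{L^2}\in\ell^r$. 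That sum over $j'\geq j-2$ converges precisely because $\eta<\frac{5}{2}$ — this, not a labelling convention, is where the upper bound on $\eta$ is used, so your closing remark about its role is also incorrect.
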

\begin{proof}
By the Bony's decomposition,\begin{equation}\label{decom1}
	[\dot{\Delta}_j,f]g=[\dot{\Delta}_j,\dot{T}_f]g+\dot{\Delta}_j(\dot{T}_gf+\dot{R}(f,g))-(\dot{T}_{\dot{\Delta}_jg}f+\dot{R}(\dot{\Delta}_jg,f)).
\end{equation}From \eqref{orthogon}, the first term can be written as\begin{equation}
	[\dot{\Delta}_j,\dot{T}_f]g=\sum_{|j-j^\prime|\leq 4} [\dot{\Delta}_j,\dot{S}_{j^\prime-1}f]\dot{\Delta}_{j^\prime}g.
\end{equation}According to \cite[Lemma 2.97]{MR2768550}, one has\begin{equation}
	\|[\dot{\Delta}_j,\dot{S}_{j^\prime-1}f]\dot{\Delta}_{j^\prime}g\|_{L^2}\lesssim 2^{-j}\|\nabla \dot{S}_{j^\prime-1}f\|_{L^\infty}\|\dot{\Delta}_{j^\prime}g\|_{L^2}.
\end{equation}Because $\eta>0$, we have\begin{equation}
	\begin{split}
		\|\nabla \dot{S}_{j^\prime-1}f\|_{L^\infty}&\leq \sum_{j^{\prime\prime}\leq j^\prime-2} \|\nabla \dot{\Delta}_{j^{\prime\prime}}f\|_{L^\infty}\\
		&\leq \|\nabla f\|_{\dot{B}^{-\eta}_{\infty,\infty}}\sum_{j^{\prime\prime}\leq j^\prime-2} 2^{j^{\prime\prime}\eta}\lesssim 2^{j^{\prime}\eta} \|\nabla f\|_{\dot{B}^{-\eta}_{\infty,\infty}}.
	\end{split}
\end{equation}Therefore,\begin{equation}\label{p1}\begin{split}
		2^{js}\|[\dot{\Delta}_j,\dot{T}_f]g\|_{L^2}
		&\leq2^{js}\sum_{|j-j^\prime|\leq 4}\|[\dot{\Delta}_j,\dot{S}_{j^\prime-1}f]\dot{\Delta}_{j^\prime}g\|_{L^2}\\&\lesssim \|\nabla f\|_{\dot{B}^{-\eta}_{\infty,\infty}}\sum_{|j-j^\prime|\leq 4}2^{j^\prime(s+\eta-1)}\|\dot{\Delta}_{j^\prime}g\|_{L^2}.
	\end{split}
\end{equation}Taking the $\ell^r(\mathbb{Z})$ norm for both sides of \eqref{p1} and using \lemref{prop} (i), (ii) implies that\begin{equation}\label{k1}\begin{split}
	\|2^{js}\|[\dot{\Delta}_j,\dot{T}_f]g\|_{L^2}\|_{\ell^r(\mathbb{Z})}&\lesssim \|\nabla f\|_{\dot{B}^{-\eta}_{\infty,\infty}}\|g\|_{\dot{B}^{s+\eta-1}_{2,r}}\lesssim \|f\|_{\dot{B}^{\frac{5}{2}-\eta}_{2,\infty}}\|g\|_{\dot{B}^{s+\eta-1}_{2,r}}.
\end{split}
\end{equation}Since $\theta>0$ and $s>-\frac{3}{2}$, we get by applying \lemref{product1} and embeddings that\begin{align}
	\|2^{js}\|\dot{\Delta}_j\dot{T}_gf\|_{L^2}\|_{\ell^r(\mathbb{Z})}&\lesssim\|g\|_{\dot{B}^{-\theta}_{\infty,\infty}}\|f\|_{\dot{B}^{s+\theta}_{2,r}}\lesssim\|g\|_{\dot{B}^{\frac{3}{2}-\theta}_{2,\infty}}\|f\|_{\dot{B}^{s+\theta}_{2,r}},\label{k2}\\
	\|2^{js}\|\dot{\Delta}_j\dot{R}(f,g)\|_{L^2}\|_{\ell^r(\mathbb{Z})}&\lesssim\|\dot{R}(f,g)\|_{\dot{B}^{s+\frac{3}{2}}_{1,r}}\lesssim\|g\|_{\dot{B}^{\frac{3}{2}-\theta}_{2,\infty}}\|f\|_{\dot{B}^{s+\theta}_{2,r}}.\label{kk}
\end{align}Note that\begin{equation}
\dot{T}_{\dot{\Delta}_jg}f+\dot{R}(\dot{\Delta}_jg,f)=\sum_{j^\prime\geq j-2}\dot{S}_{j^\prime+2}\dot{\Delta}_jg\dot{\Delta}_{j^\prime}f.
\end{equation}Hence, by using Berstein inequalities, we get\begin{equation}\label{clo}
\begin{split}
	2^{js}\|\dot{T}_{\dot{\Delta}_jg}f+\dot{R}(\dot{\Delta}_jg,f)\|_{L^2}&\lesssim \sum_{j^\prime\geq j-2}2^{js}\|\dot{\Delta}_jg\|_{L^\infty}\|\dot{\Delta}_{j^\prime}f\|_{L^2}\\
	&\lesssim \sum_{j^\prime\geq j-2}2^{j(s+\frac{3}{2})}\|\dot{\Delta}_jg\|_{L^2}\|\dot{\Delta}_{j^\prime}f\|_{L^2}\\
	&\lesssim \|g\|_{\dot{B}^{s+\eta-1}_{2,\infty}}\sum_{j^\prime\geq j-2}2^{(j-j^\prime)(\frac{5}{2}-\eta)}2^{j^\prime(\frac{5}{2}-\eta)}\|\dot{\Delta}_{j^\prime}f\|_{L^2}.
\end{split}
\end{equation}Since $\eta<\frac{5}{2}$, taking the $\ell^r(\mathbb{Z})$ norm of both sides of \eqref{clo} and using the convolution inequality, we have\begin{equation}
\begin{split}
\|2^{js}\|\dot{T}_{\dot{\Delta}_jg}f+\dot{R}(\dot{\Delta}_jg,f)\|_{L^2}\|_{\ell^r(\mathbb{Z})}&\lesssim \|g\|_{\dot{B}^{s+\eta-1}_{2,\infty}}\|f\|_{\dot{B}^{\frac{5}{2}-\eta}_{2,r}}
\end{split}
\end{equation}Thus, the proof of \eqref{commues} is finished.
\end{proof}
We end this section with the following classical Kato-Ponce commutator estimates and Sobolev embedding inequality.
\begin{lem}[\cite{MR951744}]\label{commues3}
	Let $p,p_1,p_3\in (1,\infty)$ and $p_2,p_4\in[1,\infty]$ satisfying $\frac{1}{p}=\frac{1}{p_1}+\frac{1}{p_2}=\frac{1}{p_3}+\frac{1}{p_4}$. Then for any $s>0$, there holds\begin{equation}
		\|[\Lambda^s,f]g\|_{L^p}\lesssim \|\Lambda^sf\|_{L^{p_1}}\|g\|_{L^{p_2}}+\|\Lambda^{s-1}g\|_{L^{p_3}}\|\nabla f\|_{L^{p_4}}.
	\end{equation}
\end{lem}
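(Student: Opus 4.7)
The estimate is the classical Kato--Ponce commutator inequality, attributed to \cite{MR951744}, so the plan is to outline the standard paradifferential proof rather than reinvent it. The overall strategy is to exploit Bony's decomposition to isolate the single piece of $[\Lambda^s, f]g$ for which the commutator structure genuinely gains a derivative on $f$, separating it from pieces where $\Lambda^s$ can simply be transferred to the factor of highest frequency at the cost of a power of $2^j$.

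First, I would apply the Bony decomposition \eqref{BD} to both $fg$ and $f\,\Lambda^s g$ and subtract, which yields
\begin{equation*}
[\Lambda^s, f]g = [\Lambda^s, \dot{T}_f]g + \bigl(\Lambda^s \dot{T}_g f - \dot{T}_{\Lambda^s g} f\bigr) + \bigl(\Lambda^s \dot{R}(f,g) - \dot{R}(f, \Lambda^s g)\bigr).
\end{equation*}
Only the first term is a genuine paraproduct commutator; in the latter two, the frequency localization allows $\Lambda^s$ to land on the high-frequency factor at a cost of a $2^{j's}$ factor, which after H\"older's inequality yields the contribution $\|\Lambda^s f\|_{L^{p_1}}\|g\|_{L^{p_2}}$.

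Second, for the paraproduct commutator I would frequency-localize
\begin{equation*}
\dot{\Delta}_j [\Lambda^s, \dot{T}_f]g = \sum_{|j-j'|\leq 4}[\Lambda^s, \dot{S}_{j'-1}f]\dot{\Delta}_{j'}g
\end{equation*}
and note that on the relevant Fourier support the symbol $|\xi|^s - |\xi - \eta|^s$ satisfies $|\eta|\ll|\xi|\sim 2^{j'}$; a Taylor expansion gives $s|\xi|^{s-2}\xi\cdot\eta + O(|\xi|^{s-2}|\eta|^2)$, which is a Coifman--Meyer symbol of order $s-1$ in $\xi$ and of order $1$ in $\eta$. Invoking the Coifman--Meyer multiplier theorem together with H\"older's inequality and summing in $j$ produces the contribution $\|\nabla f\|_{L^{p_4}}\|\Lambda^{s-1}g\|_{L^{p_3}}$.

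Third, for the two grouped pieces $\Lambda^s \dot{T}_g f - \dot{T}_{\Lambda^s g} f$ and $\Lambda^s \dot{R}(f,g) - \dot{R}(f,\Lambda^s g)$, Bernstein's inequalities from \lemref{berstein} let us move $\Lambda^s$ onto $\dot{\Delta}_{j'}f$ with equivalent constants, while the almost-orthogonality relations \eqref{orthogon} close the $j$-sum. The main technical obstacle is the rigorous treatment of the non-integer fractional commutator $[\Lambda^s, \dot{S}_{j'-1}f]$ via the Coifman--Meyer machinery; since this is classical and here used only as a black box, we content ourselves with citing \cite{MR951744} for the complete proof.
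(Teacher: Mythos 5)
The paper gives no proof of this lemma: it is imported verbatim from the Kato--Ponce reference \cite{MR951744} and used as a black box, which is ultimately what you do as well, so your treatment is consistent with the paper's. Your outline of the standard paradifferential proof is essentially correct: the Bony splitting of $[\Lambda^s,f]g$ into the paraproduct commutator $[\Lambda^s,\dot{T}_f]g$ plus the two pieces $\Lambda^s\dot{T}_g f-\dot{T}_{\Lambda^s g}f$ and $\Lambda^s\dot{R}(f,g)-\dot{R}(f,\Lambda^s g)$ is the right decomposition, the latter two produce $\|\Lambda^s f\|_{L^{p_1}}\|g\|_{L^{p_2}}$ by transferring $\Lambda^s$ onto the high-frequency factor $f$, and the paraproduct commutator produces $\|\nabla f\|_{L^{p_4}}\|\Lambda^{s-1}g\|_{L^{p_3}}$. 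One small imprecision: on the support of $\widehat{\dot{S}_{j'-1}f}(\eta)\,\widehat{\dot{\Delta}_{j'}g}(\xi-\eta)$ one only has $|\eta|\lesssim 2^{j'}$ and $|\xi|\gtrsim 2^{j'}$ with comparable constants, not $|\eta|\ll|\xi|$, so the naive Taylor expansion of $|\xi|^s-|\xi-\eta|^s$ about $\eta=0$ is not uniformly justified as stated; the standard fix is either a smooth truncation of $\Lambda^s$ near frequency $2^{j'}$ or the physical-space kernel estimate of \cite[Lemma 2.97]{MR2768550}. Since you explicitly defer these Coifman--Meyer technicalities to \cite{MR951744}, this is a cosmetic issue rather than a gap.
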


\begin{lem}[{\cite[Theorem 1.38]{MR2768550}}]\label{GN}
	Let $0\leq s< \frac{3}{2}$, the following embedding relation holds\begin{equation}
		\dot{H}^s(\mathbb{R}^3)\hookrightarrow L^{\frac{6}{3-2s}}(\mathbb{R}^3).\label{Sobolevem}
	\end{equation}
\end{lem}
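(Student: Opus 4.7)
The plan is to give the standard Littlewood--Paley proof of the Sobolev embedding, which fits naturally with the tools already introduced in Section~\ref{sec2}. First I observe that the case $s=0$ reduces to $L^2\hookrightarrow L^2$, which is trivial, so I may assume $0<s<\frac{3}{2}$ and set $p:=\frac{6}{3-2s}$, noting that $2<p<\infty$ and, crucially, $3\bigl(\frac{1}{2}-\frac{1}{p}\bigr)=s$. Since $u\in\dot{H}^s(\mathbb{R}^3)\subset\mathcal{S}_h'(\mathbb{R}^3)$, I may write $u=\sum_{j\in\mathbb{Z}}\dot{\Delta}_j u$.

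The first step is to upgrade the $L^2$ information on each block to $L^p$ information. Because $\dot{\Delta}_j u$ has Fourier support in an annulus of size $\sim 2^j$, Bernstein's inequality (\lemref{berstein}) yields
\begin{equation*}
\|\dot{\Delta}_j u\|_{L^p}\lesssim 2^{3j(\frac{1}{2}-\frac{1}{p})}\|\dot{\Delta}_j u\|_{L^2}=2^{js}\|\dot{\Delta}_j u\|_{L^2}.
\end{equation*}
Squaring and summing in $j$ gives the continuous embedding $\dot{H}^s=\dot{B}^{s}_{2,2}(\mathbb{R}^3)\hookrightarrow\dot{B}^{0}_{p,2}(\mathbb{R}^3)$.

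The second step is the embedding $\dot{B}^{0}_{p,2}(\mathbb{R}^3)\hookrightarrow L^p(\mathbb{R}^3)$ for $p\in(1,\infty)$. Here I would invoke the Littlewood--Paley characterization of $L^p$ (a Mikhlin--Hörmander/Khintchine consequence, valid for $1<p<\infty$), which yields
\begin{equation*}
\|u\|_{L^p}\lesssim\Bigl\|\Bigl(\sum_{j\in\mathbb{Z}}|\dot{\Delta}_j u|^2\Bigr)^{1/2}\Bigr\|_{L^p}.
\end{equation*}
Since $p\geq 2$, Minkowski's inequality applied to the function $j\mapsto|\dot{\Delta}_j u|^2$ in $L^{p/2}$ gives
\begin{equation*}
\Bigl\|\Bigl(\sum_{j\in\mathbb{Z}}|\dot{\Delta}_j u|^2\Bigr)^{1/2}\Bigr\|_{L^p}\leq\Bigl(\sum_{j\in\mathbb{Z}}\|\dot{\Delta}_j u\|_{L^p}^2\Bigr)^{1/2}=\|u\|_{\dot{B}^{0}_{p,2}}.
\end{equation*}
Chaining the two steps yields $\|u\|_{L^p}\lesssim\|u\|_{\dot{H}^s}$, which is the desired embedding.

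The only delicate points are the use of the Littlewood--Paley square function equivalence for $L^p$ (which fails at the endpoints $p=1,\infty$, so the hypothesis $s<3/2$ ruling out $p=\infty$ is essential) and the fact that the series $\sum_j\dot{\Delta}_j u$ genuinely reconstructs $u$, which is guaranteed by the standing assumption $u\in\mathcal{S}_h'(\mathbb{R}^3)$. As an alternative I could bypass Littlewood--Paley entirely and argue via the Riesz potential: write $u=\Lambda^{-s}f$ with $f:=\Lambda^s u\in L^2$, realize $\Lambda^{-s}$ as convolution against $c_s|x|^{s-3}$, and apply the Hardy--Littlewood--Sobolev inequality with exponents $2$ and $p=\frac{6}{3-2s}$ to conclude $\|u\|_{L^p}\lesssim\|f\|_{L^2}=\|u\|_{\dot{H}^s}$; this route is shorter but less consistent with the paradifferential toolbox deployed elsewhere in the paper.
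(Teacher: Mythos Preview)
The paper does not actually prove this lemma; it is stated as a citation of \cite[Theorem 1.38]{MR2768550} and used as a black box. Your argument is a correct and complete proof of the embedding, so there is no gap to flag.

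A brief comparison: the source \cite{MR2768550} proves the statement directly on the Fourier side by splitting $\hat u$ into a low-frequency piece in $L^1$ and a high-frequency piece in $L^2$ and optimizing the cut-off radius (a ``real-variable'' interpolation), without invoking the Littlewood--Paley square function equivalence for $L^p$. Your route (Bernstein to get $\dot B^{s}_{2,2}\hookrightarrow\dot B^{0}_{p,2}$, then the square-function characterization plus Minkowski for $p\ge 2$ to reach $L^p$) is equally standard and meshes well with the dyadic machinery of Section~\ref{sec2}; the price is the appeal to the Mikhlin/Khintchine theorem, which is heavier than what the cited reference uses. Your alternative via $\Lambda^{-s}$ and Hardy--Littlewood--Sobolev is also valid and is arguably the most economical self-contained argument.
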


\section{Proof of \thmref{thm1}}\label{sec3}
In this section, we are going to prove \thmref{thm1}. For simplicity, we hope to avoid tracking the dependency of $\mu,\nu,\kappa$ for the constants in the proofs, so we set $\mu=\nu=\kappa=1$ in the rest of this paper. For presentation clarity, we divide the proof into the following several steps.\par 
\noindent\textbf{First step: A priori estimates}\par
Our goal in this step is to establish the following statement.
\begin{prop}\label{aprior}
Let $0<\sigma<2$ and $T>0$. If $(u,B)$ solves the system \eqref{main eq} on $[0,T]$, then there exist postive constants $C_1>1$, $C_2$ and $c$ such that the following a priori estimates hold true,
\begin{equation}\label{apriores}
	\begin{split}
&\|u\|_{L^\infty_T({H}^{\frac{1}{2}+\sigma})}+\|\nabla u\|_{L^2_T({H}^{\frac{1}{2}+\sigma})}+\|B\|_{L^\infty_T({H}^{\frac{3}{2}})}+\|\nabla B\|_{L^2_T({H}^{\frac{3}{2}})}\\&\leq C_1\Big( \|u_0\|_{{H}^{\frac{1}{2}+\sigma}}+\|B_0\|_{{H}^{\frac{3}{2}}}+T^{\frac{\sigma}{4\sigma+6}}\|u\|_{L^\infty_T({H}^{\frac{1}{2}+\sigma})}^{\frac{3\sigma+3}{2\sigma+3}}\|\nabla u\|_{L^2_T({H}^{\frac{1}{2}+\sigma})}^{\frac{\sigma+3}{2\sigma+3}}\\&\quad+T^{\frac{1}{2}-\frac{\sigma}{4}}\|B\|_{L^\infty_T({H}^{\frac{3}{2}})}^{2-\frac{\sigma}{2}}\|\nabla B\|_{L_T^{2}({H}^{\frac{3}{2}})}^{\frac{\sigma}{2}} \\&\quad+T^{\frac{\sigma}{4}}\|u\|_{L^\infty_T({H}^{\frac{1}{2}+\sigma})}\|B\|_{L^\infty_T({H}^{\frac{3}{2}})}^{\frac{\sigma}{2}}\|\nabla B\|_{L^2_T({H}^{\frac{3}{2}})}^{1-\frac{\sigma}{2}}\\&\quad+T^{\frac{\sigma}{4}}\|u\|_{L^\infty_T({H}^{\frac{1}{2}+\sigma})}^{\frac{\sigma}{2}}\|\nabla u\|_{L^2_T({H}^{\frac{1}{2}+\sigma})}^{1-\frac{\sigma}{2}}\|B\|_{L^\infty_T({H}^{\frac{3}{2}})}+\|B\|_{L^4_T(\dot{H}^{2})}^2\Big),
	\end{split}
\end{equation}and\begin{equation}\label{apriores2}
\begin{split}
\|B\|_{{L}^4_T(\dot{H}^{2})}&\leq  C_2\Big(\Big[\sum_{j\in\mathbb{Z}}({1-e^{-4c2^{2j}T}})^{\frac{1}{2}}(2^{j\frac{3}{2}}\|\dot{\Delta}_jB_0\|_{L^2})^2\Big]^{\frac{1}{2}}\\&\quad+ T^{\frac{\sigma}{4}}\|u\|_{L^\infty_T({H}^{\frac{1}{2}+\sigma})}\|B\|_{L^\infty_T({H}^{\frac{3}{2}})}^{\frac{\sigma}{2}}\|\nabla B\|_{L^2_T({H}^{\frac{3}{2}})}^{1-\frac{\sigma}{2}}\\&\quad+T^{\frac{\sigma}{4}}\|u\|_{L^\infty_T({H}^{\frac{1}{2}+\sigma})}^{\frac{\sigma}{2}}\|\nabla u\|_{L^2_T({H}^{\frac{1}{2}+\sigma})}^{1-\frac{\sigma}{2}}\|B\|_{L^\infty_T({H}^{\frac{3}{2}})}+\|B\|_{L^4_T(\dot{H}^{2})}^2\Big).
\end{split}
\end{equation}
\end{prop}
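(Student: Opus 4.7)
My plan is to establish both estimates by parallel Littlewood--Paley energy arguments at the dyadic level, treating each nonlinear contribution separately and then using H\"older in time to produce the fractional powers of $T$ that appear in the statement. For \eqref{apriores}, I would apply $\dot\Delta_j$ to each equation of \eqref{main eq}, take the $L^2$ inner product of the momentum equation with $\dot\Delta_j u$ and of the induction equation with $\dot\Delta_j B$, and multiply by $2^{j(1+2\sigma)}$ and $2^{3j}$ respectively before summing in $j$. The pressure drops out by $\Div u=0$, leaving on the left the full energy $\tfrac12\tfrac{d}{dt}(\|u\|_{\dot H^{1/2+\sigma}}^2+\|B\|_{\dot H^{3/2}}^2)$ plus the dissipation $\|\nabla u\|_{\dot H^{1/2+\sigma}}^2+\|\nabla B\|_{\dot H^{3/2}}^2$; combining with the standard $L^2$-level inequality upgrades this to the full $H^s$ energy identity. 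Four nonlinear pairings remain. The Navier--Stokes term $(u\cdot\nabla u,u)_{H^{1/2+\sigma}}$ is rewritten as a commutator via $\Div u=0$ and estimated by \lemref{commues3} together with \lemref{GN} and the interpolation \lemref{prop}(iii); a H\"older split in time then produces the $T^{\sigma/(4\sigma+6)}$-weighted contribution with fractional exponents $(3\sigma+3)/(2\sigma+3)$ and $(\sigma+3)/(2\sigma+3)$ (which sum to $2$). The Lorentz coupling $(B\cdot\nabla B,u)_{H^{1/2+\sigma}}$ is controlled by the product law \lemref{product3} with $\eta,\theta$ chosen so that both $B$-factors land at $\dot H^{3/2\pm\sigma/2}$, yielding the $T^{1/2-\sigma/4}$-weighted term; the induction pairing $(\nabla\times(u\times B),B)_{\dot H^{3/2}}$ again uses \lemref{product3} and produces the two mixed $u$--$B$ terms with $T^{\sigma/4}$ prefactors.

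The crux is the Hall pairing $(\nabla\times((\nabla\times B)\times B),B)_{\dot H^{3/2}}$, which sits precisely at critical $\dot H^{3/2}$ scaling. I plan to expand via the vector identity $\nabla\times(J\times B)=B\cdot\nabla J-J\cdot\nabla B$ with $J=\nabla\times B$, localize by $\dot\Delta_j$, and split each piece into a transport part ($B\cdot\nabla\dot\Delta_j J$ or $J\cdot\nabla\dot\Delta_j B$) plus a commutator ($[\dot\Delta_j,B]\nabla J$ or $[\dot\Delta_j,J]\nabla B$). The transport parts are handled by integration by parts against $\Div B=\Div J=0$ combined with the interpolation $\|B\|_{L^\infty}\lesssim\|B\|_{\dot H^1}^{1/2}\|B\|_{\dot H^2}^{1/2}$, while the commutators are estimated dyadic-by-dyadic via the new bound \lemref{commutator} (applied with $s=\tfrac12$, $r=2$ and suitable $\eta\in(0,\tfrac52)$, $\theta>0$). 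After $\ell^2$ summation and a time-H\"older split with exponent $1/4$ on the dissipation factor, every resulting contribution is majorized by $\|B\|_{L^4_T(\dot H^2)}^2$ multiplied by quantities already controlled by the energy, which is precisely why $\|B\|_{L^4_T(\dot H^2)}$ must be tracked alongside the standard energy norms via the separate estimate \eqref{apriores2}.

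For \eqref{apriores2}, I would write $B$ through the Duhamel formula $B(t)=e^{t\Delta}B_0+\int_0^t e^{(t-\tau)\Delta}F(\tau)\,d\tau$, where $F$ collects the induction and Hall source terms (preferably in divergence form). A direct dyadic computation yields $\|e^{t\Delta}\dot\Delta_j B_0\|_{L^4_T L^2}\lesssim 2^{-j/2}(1-e^{-8c2^{2j}T})^{1/4}\|\dot\Delta_j B_0\|_{L^2}$; multiplying by $2^{2j}$, squaring and $\ell^2$-summing in $j$ reproduces exactly the free-data term in \eqref{apriores2}. The source contributions are re-estimated by the same product-law and commutator machinery as above, now paired with an $L^4_t L^2_x$ maximal-regularity bound for the heat semigroup instead of the usual $L^2_t L^2_x$ one, producing the remaining $T^{\sigma/4}$-weighted mixed terms together with the self-referential $\|B\|_{L^4_T(\dot H^2)}^2$. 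The main subtlety I anticipate throughout is the careful choice of the free parameters $\eta,\theta$ in \lemref{product3} and \lemref{commutator} so that the time-H\"older exponents in the resulting bounds exactly match those displayed in \eqref{apriores}--\eqref{apriores2}; closure of this self-referential system on a small time interval belongs to the next step of the proof of \thmref{thm1} rather than to \propref{aprior} itself.
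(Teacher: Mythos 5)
Much of your plan coincides with the paper's route: the $\dot\Delta_j$-localization of the induction equation, the Duhamel/convolution-in-time argument producing the free-data factor $\big[\sum_j(1-e^{-4c2^{2j}T})^{\frac12}(2^{\frac{3j}{2}}\|\dot\Delta_jB_0\|_{L^2})^2\big]^{\frac12}$ (which is exactly how the paper obtains \eqref{apriores2}), and the use of \lemref{product3} with time-H\"older to generate the $T$-weighted mixed terms. (For the velocity the paper is more elementary than you propose: it does not commute $\Lambda^{\frac12+\sigma}$ with $u\cdot\nabla$ but simply bounds $\|\Lambda^{\sigma-\frac12}\Div(u\otimes u)\|_{L^2}$ and absorbs half the dissipation; your Kato--Ponce route is workable but unnecessary.)

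The genuine gap is in your treatment of the Hall term, which is the heart of the proposition. Writing $\nabla\times(J\times B)=B\cdot\nabla J-J\cdot\nabla B$ with $J=\nabla\times B$ and splitting into ``transport plus commutator'' leaves the non-commutator piece $(B\cdot\nabla\dot\Delta_jJ,\dot\Delta_jB)=-(\dot\Delta_jJ,\,B\cdot\nabla\dot\Delta_jB)$, which carries two full derivatives on the two localized factors. After multiplying by $2^{3j}$ and summing, this piece is of size $\|B\|_{L^\infty}\|B\|_{\dot H^{5/2}}^2$ (or, exploiting the additional curl antisymmetry, $\|\nabla B\|_{L^\infty}\|B\|_{\dot H^{2}}^2$): in either form the prefactor is neither small for large data nor vanishing as $T\to0$, and it cannot be reduced to $\|B\|_{L^4_T(\dot H^2)}^2$ at the $H^{3/2}$ regularity level -- this is precisely the $r>\frac32$ obstruction described in Remark 1.2. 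The paper avoids the transport piece altogether: it keeps the Hall term in the form $\nabla\times(B\times J)$, integrates by parts once so that $\dot\Delta_j(B\times J)$ is paired with $\nabla\times\dot\Delta_jB$, and uses the pointwise identity $(B\times(\nabla\times\dot\Delta_jB))\cdot(\nabla\times\dot\Delta_jB)=0$, so that \emph{only} the commutator $[\dot\Delta_j,B\times](\nabla\times B)$ survives (see \eqref{convo}); that commutator is then estimated by \lemref{commutator} with $(s,r,\eta,\theta)=(\frac32,2,\frac12,\frac12)$ (not $s=\frac12$ as you suggest) and lands exactly at $\|B\|_{L^4_T(\dot H^2)}^2$. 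Without this specific cancellation your energy inequality does not close, so this step of the proposal must be replaced rather than merely tightened.
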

\begin{proof}
Taking the inner product of $\eqref{main eq}$ with $u$ and $B$, respectively, and adding up the resulting equations, one easily have\begin{equation}\label{basicener}
	\frac{1}{2}\frac{d}{dt}(\|u\|_{L^2}^2+\|B\|_{L^2}^2)+\|\nabla u\|_{L^2}^2+\|\nabla B\|_{L^2}^2=0,
\end{equation}where we have used the fact that\begin{equation*}
	\int_{\mathbb{R}^3}\nabla\times((\nabla\times B)\times B)\cdot Bdx=\int_{\mathbb{R}^3}((\nabla\times B)\times B)\cdot (\nabla\times B)dx=0.
\end{equation*}Hence, integrating \eqref{basicener} over $[0,T]$ both sides leads to\begin{equation}\label{nice0}
\begin{split}
	\|u\|_{L^\infty_T(L^2)}+\|B\|_{L^\infty_T(L^2)}+\|\nabla u\|_{L^2_T(L^2)}+\|\nabla B\|_{L^2_T(L^2)}\lesssim\|u_0\|_{L^2}+\|B_0\|_{L^2}.
\end{split}
\end{equation}\par 
Applying the operator $\Lambda^{\frac{1}{2}+\sigma}$ to the first equation of \eqref{main eq} and taking the inner product with $\Lambda^{\frac{1}{2}+\sigma}u$, then using H\"{o}lder's inequality and Young's inequality, we obtain\begin{equation}\label{basic2}
\begin{split}
&\frac{1}{2}\frac{d}{dt}\|\Lambda^{\frac{1}{2}+\sigma}u\|_{L^2}^2+\|\Lambda^{\frac{3}{2}+\sigma}u\|_{L^2}^2\\&=-\Big(\Lambda^{\frac{1}{2}+\sigma}(u\cdot\nabla u),\Lambda^{\frac{1}{2}+\sigma} u\Big)+\Big(\Lambda^{\frac{1}{2}+\sigma}(B\cdot\nabla B),\Lambda^{\frac{1}{2}+\sigma} u\Big)\\
&\leq C\|\Lambda^{\sigma-\frac{1}{2}}(u\cdot\nabla u)\|_{L^2}^2+C\|\Lambda^{\sigma-\frac{1}{2}}(B\cdot\nabla B)\|_{L^2}^2+\frac{1}{2}\|\Lambda^{\frac{3}{2}+\sigma}u\|_{L^2}^2,
\end{split}
\end{equation}whence, integrating with respect to time gives\begin{equation}\label{gg1}
\begin{split}
	\|u\|_{L^\infty_T(\dot{H}^{\frac{1}{2}+\sigma})}&+\|\nabla u\|_{L^2_T(\dot{H}^{\frac{1}{2}+\sigma})}\lesssim\|u_0\|_{\dot{H}^{\frac{1}{2}+\sigma}}+\|u\cdot\nabla u\|_{L^2_T(\dot{H}^{\sigma-\frac{1}{2}})}+\|B\cdot\nabla B\|_{L^2_T(\dot{H}^{\sigma-\frac{1}{2}})}.
\end{split}
\end{equation}\par To derive the high-order estimates of $B$, we use the Fourier localization techniques. Applying the operator $\dot{\Delta}_j$ to the second equation in \eqref{main eq} and taking the inner product with $\dot{\Delta}_jB$, one has\begin{equation}\label{Bes}
\begin{split}
	\frac{1}{2}\frac{d}{dt}\|\dot{\Delta}_jB\|_{L^2}^2+\|\nabla\dot{\Delta}_j B\|_{L^2}^2&\leq \|\dot{\Delta}_j\nabla\times(u\times B)\|_{L^2}\|\dot{\Delta}_jB\|_{L^2}\\&\quad+\Big(\dot{\Delta}_j(\nabla\times(B\times(\nabla\times B) )),\dot{\Delta}_jB\Big).
\end{split}
\end{equation}Thanks to Berstein inequalities and the cancelation property, there exist universal constants $c$ and $C$ such that\begin{equation}\label{haosan}
\|\nabla \dot{\Delta}_jB\|_{L^2}^2 \geq c 2^{2j}\|\dot{\Delta}_jB\|_{L^2}^2,
\end{equation}and\begin{equation}\label{convo}
\begin{split}
	&\Big(\dot{\Delta}_j(\nabla\times(B\times(\nabla\times B) )),\dot{\Delta}_jB\Big)
	\\=&	\Big(\dot{\Delta}_j(B\times(\nabla\times B) )-B\times(\nabla\times\dot{\Delta}_j B) ,\nabla\times\dot{\Delta}_jB\Big)
	\\=&	\Big([\dot{\Delta}_j,B\times](\nabla\times B) ,\nabla\times\dot{\Delta}_jB\Big)\\
	\leq &\|[\dot{\Delta}_j,B\times](\nabla\times B)\|_{L^2}\|\nabla\times\dot{\Delta}_jB\|_{L^2}\\
	\leq &C2^j\|[\dot{\Delta}_j,B\times](\nabla\times B)\|_{L^2}\|\dot{\Delta}_jB\|_{L^2}
\end{split}
\end{equation}Inserting \eqref{haosan} and \eqref{convo} into \eqref{Bes}, and dividing through by $\|\dot{\Delta}_jB\|_{L^2}$, we find that\begin{equation}\label{1}
\begin{split}
	\frac{d}{dt}\|\dot{\Delta}_jB\|_{L^2}+c2^{2j}\|\dot{\Delta}_j B\|_{L^2}\leq& \|\dot{\Delta}_j\nabla\times(u\times B)\|_{L^2}+C2^j\|[\dot{\Delta}_j,B\times](\nabla\times B)\|_{L^2}.
\end{split}
\end{equation}Multiplying both sides of \eqref{1} by $e^{c2^{2j}t}$ gives\begin{equation}\label{2}
\begin{split}
\frac{d}{dt}\Big(e^{c2^{2j}t}\|\dot{\Delta}_jB\|_{L^2}\Big)\leq& e^{c2^{2j}t}\Big(\|\dot{\Delta}_j\nabla\times(u\times B)\|_{L^2}+C2^j\|[\dot{\Delta}_j,B\times](\nabla\times B)\|_{L^2}\Big).
\end{split}
\end{equation}After time integration, we get that for any $t\in [0,T]$,\begin{equation}\label{juanji}
\begin{split}
		\|\dot{\Delta}_jB(t)\|_{L^2}\leq& e^{-c2^{2j}t}\|\dot{\Delta}_jB_0\|_{L^2}+\int_{0}^{t}e^{-c2^{2j}(t-\tau)}\Big(\|\dot{\Delta}_j\nabla\times(u\times B)\|_{L^2}\\&+C2^j\|[\dot{\Delta}_j,B\times](\nabla\times B)\|_{L^2}\Big)d\tau.
\end{split}
\end{equation}Applying convolution inequalities to \eqref{juanji} implies that for any $p,q\in [2,\infty]$ with $1+\frac{1}{p}=\frac{1}{2}+\frac{1}{q}$, \begin{equation}\label{3}
\begin{split}
	\|\dot{\Delta}_jB\|_{L^p_T(L^2)}\leq& \Big(\frac{1-e^{-pc2^{2j}T}}{pc2^{2j}}\Big)^{\frac{1}{p}}\|\dot{\Delta}_jB_0\|_{L^2}+\Big(\frac{1-e^{-qc2^{2j}T}}{qc2^{2j}}\Big)^{\frac{1}{q}}\Big(\|\dot{\Delta}_j\nabla\times(u\times B)\|_{L^2_T(L^2)}\\&+C2^j\|[\dot{\Delta}_j,B\times](\nabla\times B)\|_{L^2_T(L^2)}\Big).
\end{split}
\end{equation}
Multiplying both sides of \eqref{3} by $2^{j(\frac{3}{2}+\frac{2}{p})}$ and taking the $\ell^2(\mathbb{Z})$ norm, we arrive at\begin{equation}\label{important}
	\begin{split}
		\|B&\|_{\widetilde{L}_T^p(\dot{H}^{\frac{3}{2}+\frac{2}{p}})} \leq \frac{1}{(pc)^{\frac{1}{p}}} \Big[\sum_{j\in\mathbb{Z}}({1-e^{-pc2^{2j}T}})^{\frac{2}{p}}(2^{j\frac{3}{2}}\|\dot{\Delta}_jB_0\|_{L^2})^2\Big]^{\frac{1}{2}}\\&+C\Big(\|\nabla\times(u\times B)\|_{L^2_T(\dot{H}^{\frac{1}{2}})}+\Big\|\|2^{j\frac{3}{2}}\|[\dot{\Delta}_j,B\times](\nabla\times B)\|_{L^2}\|_{\ell^2(\mathbb{Z})}\Big\|_{L^2_T}\Big),
	\end{split}
\end{equation}
Thus, due to fact that $\|f\|_{H^a}\approx\|f\|_{L^2}+\|f\|_{\dot{H}^a}$ and \remref{minkow}, we see that
\begin{equation}\label{gg2}
	\begin{split}
		&\|u\|_{L_T^\infty({H}^{\frac{1}{2}+\sigma})}+\|\nabla u\|_{L_T^2({H}^{\frac{1}{2}+\sigma})}+\|B\|_{L_T^\infty({H}^{\frac{3}{2}})}+\|\nabla B\|_{L_T^2({H}^{\frac{3}{2}})}\\&\lesssim\|u\|_{L_T^\infty(L^2)}+\|\nabla u\|_{L_T^2({L^2})}+\|B\|_{L_T^\infty(L^2)}+\|\nabla B\|_{L_T^2({L^2})}\\
		&\qquad+\|u\|_{{L}_T^\infty(\dot{H}^{\frac{1}{2}+\sigma})}+\| u\|_{{L}_T^2(\dot{H}^{\frac{3}{2}+\sigma})}+\|B\|_{\widetilde{L}_T^\infty(\dot{H}^{\frac{3}{2}})}+\|B\|_{\widetilde{L}_T^2(\dot{H}^{\frac{5}{2}})}\\&\lesssim \|u_0\|_{H^{\frac{1}{2}+\sigma}}+\|B_0\|_{H^{\frac{3}{2}}}+\|u\cdot\nabla u\|_{L^2_T(\dot{H}^{\sigma-\frac{1}{2}})}+\|B\cdot\nabla B\|_{L^2_T(\dot{H}^{\sigma-\frac{1}{2}})}\\&\qquad+\|\nabla\times(u\times B)\|_{L^2_T(\dot{H}^{\frac{1}{2}})}+\Big\|\|2^{j\frac{3}{2}}\|[\dot{\Delta}_j,B\times](\nabla\times B)\|_{L^2}\|_{\ell^2(\mathbb{Z})}\Big\|_{L^2_T},
	\end{split}
\end{equation}and\begin{equation}\label{gg3}
	\begin{split}
\|B\|_{L_T^4(\dot{H}^2)}&\leq \|B\|_{\widetilde{L}_T^4(\dot{H}^2)}\lesssim \Big[\sum_{j\in\mathbb{Z}}({1-e^{-4c2^{2j}T}})^{\frac{1}{2}}(2^{j\frac{3}{2}}\|\dot{\Delta}_jB_0\|_{L^2})^2\Big]^{\frac{1}{2}}\\&+\|\nabla\times(u\times B)\|_{L^2_T(\dot{H}^{\frac{1}{2}})}+\Big\|\|2^{j\frac{3}{2}}\|[\dot{\Delta}_j,B\times](\nabla\times B)\|_{L^2}\|_{\ell^2(\mathbb{Z})}\Big\|_{L^2_T}.
	\end{split}
\end{equation}
\par 
Let us now turn to the nonlinear estimates for the right-hand sides of \eqref{gg2} and \eqref{gg3}. Recall that $0<\sigma<2$, it is clear that the following two product laws stem from \lemref{product3} with $(s,r,\eta,\theta)=(\frac{1}{2}+\sigma,2,1-\frac{\sigma}{2},1-\frac{\sigma}{2})$ and $(s,r,\eta,\theta)=(\frac{3}{2},2,1-\frac{\sigma}{2},\frac{\sigma}{2})$ respectively:\begin{align}
\|fg\|_{\dot{H}^{\frac{1}{2}+\sigma}}&\lesssim \|f\|_{\dot{H}^{\frac{1}{2}+\frac{\sigma}{2}}}\|g\|_{\dot{H}^{\frac{3}{2}+\frac{\sigma}{2}}}+\|f\|_{\dot{H}^{\frac{3}{2}+\frac{\sigma}{2}}}\|g\|_{\dot{H}^{\frac{1}{2}+\frac{\sigma}{2}}},\label{ppr1}\\
\|fg\|_{\dot{H}^{\frac{3}{2}}}&\lesssim\|f\|_{\dot{H}^{\frac{1}{2}+\frac{\sigma}{2}}}\|g\|_{\dot{H}^{\frac{5}{2}-\frac{\sigma}{2}}}+\|f\|_{\dot{H}^{\frac{3}{2}+\frac{\sigma}{2}}}\|g\|_{\dot{H}^{\frac{3}{2}-\frac{\sigma}{2}}}.\label{ppr2}
\end{align}Due to the condition $\Div u=\Div B=0$, one has\begin{equation*}
\begin{split}
	&u\cdot\nabla u=\Div(u\otimes u),\quad  B\cdot\nabla B=\Div(B\otimes B).
\end{split}\end{equation*}Using \eqref{ppr1}, H\"{o}lder's inequality, interpolation and the embedding $H^a\hookrightarrow H^b\hookrightarrow \dot{H}^b$ for $a\geq b\geq 0$, the right-hand side of \eqref{gg1} can be bounded as follows\begin{equation}\label{target1}
\begin{split}
\|u\cdot\nabla u\|_{L^2_T(\dot{H}^{\sigma-\frac{1}{2}})}&\lesssim \|u\otimes u\|_{L^2_T(\dot{H}^{\frac{1}{2}+\sigma})}\\
&\lesssim \|u\|_{L^\infty_T(\dot{H}^{\frac{1}{2}+\frac{\sigma}{2}})}\|u\|_{L^2_T(\dot{H}^{\frac{3}{2}+\frac{\sigma}{2}})}\\
&\lesssim \|u\|_{L^\infty_T(\dot{H}^{\frac{1}{2}+\frac{\sigma}{2}})}\Big(\|u\|_{L^2_T(L^2)}^{\frac{\sigma}{2\sigma+3}}\|\nabla u\|_{L^2_T(\dot{H}^{\frac{1}{2}+\sigma})}^{\frac{\sigma+3}{2\sigma+3}}\Big)\\
&\lesssim \|u\|_{L^\infty_T(\dot{H}^{\frac{1}{2}+\frac{\sigma}{2}})}\Big(T^{\frac{\sigma}{4\sigma+6}}\|u\|_{L^\infty_T(L^2)}^{\frac{\sigma}{2\sigma+3}}\|\nabla u\|_{L^2_T(\dot{H}^{\frac{1}{2}+\sigma})}^{\frac{\sigma+3}{2\sigma+3}}\Big)\\
&\lesssim T^{\frac{\sigma}{4\sigma+6}}\|u\|_{L^\infty_T({H}^{\frac{1}{2}+\sigma})}^{\frac{3\sigma+3}{2\sigma+3}}\|\nabla u\|_{L^2_T({H}^{\frac{1}{2}+\sigma})}^{\frac{\sigma+3}{2\sigma+3}},\end{split}
\end{equation}and\begin{equation}\label{target11}
	\begin{split}
\|B\cdot\nabla B\|_{L^2_T(\dot{H}^{\sigma-\frac{1}{2}})}&\lesssim\|B\otimes B\|_{L^2_T(\dot{H}^{\frac{1}{2}+\sigma})}\\
&\lesssim \|B\|_{L^\infty_T(\dot{H}^{\frac{1}{2}+\frac{\sigma}{2}})}\|B\|_{L_T^{2}(\dot{H}^{\frac{3}{2}+\frac{\sigma}{2}})}\\
&\lesssim \|B\|_{L^\infty_T(\dot{H}^{\frac{1}{2}+\frac{\sigma}{2}})}\|B\|_{L_T^{2}(\dot{H}^{\frac{3}{2}})}^{1-\frac{\sigma}{2}}\|\nabla B\|_{L_T^{2}(\dot{H}^{\frac{3}{2}})}^{\frac{\sigma}{2}}\\
&\lesssim T^{\frac{1}{2}-\frac{\sigma}{4}}\|B\|_{L^\infty_T({H}^{\frac{3}{2}})}^{2-\frac{\sigma}{2}}\|\nabla B\|_{L_T^{2}({H}^{\frac{3}{2}})}^{\frac{\sigma}{2}}.\end{split}
\end{equation}Likewise, resorting to \eqref{ppr2}, H\"{o}lder's inequality and interpolation gives
\begin{equation}\label{target111}
\begin{split}
		&\|\nabla\times(u\times B)\|_{L^2_T(\dot{H}^{\frac{1}{2}})}
		\lesssim \|u\times B\|_{L^2_T(\dot{H}^{\frac{3}{2}})}\\
		&\lesssim  \|u\|_{L^\infty_T(\dot{H}^{\frac{1}{2}+\frac{\sigma}{2}})}\|B\|_{L^2_T(\dot{H}^{\frac{5}{2}-\frac{\sigma}{2}})}+\|u\|_{L^2_T(\dot{H}^{\frac{3}{2}+\frac{\sigma}{2}})}\|B\|_{L^\infty_T(\dot{H}^{\frac{3}{2}-\frac{\sigma}{2}})}\\
		&\lesssim \|u\|_{L^\infty_T(\dot{H}^{\frac{1}{2}+\frac{\sigma}{2}})}\Big(\|B\|_{L^2_T(\dot{H}^{\frac{3}{2}})}^{\frac{\sigma}{2}}\|\nabla B\|_{L^2_T(\dot{H}^{\frac{3}{2}})}^{1-\frac{\sigma}{2}}\Big)\\&\qquad+\Big(\|u\|_{L^2_T(\dot{H}^{\frac{1}{2}+\sigma})}^{\frac{\sigma}{2}}\|\nabla u\|_{L^2_T(\dot{H}^{\frac{1}{2}+\sigma})}^{1-\frac{\sigma}{2}}\Big)\|B\|_{L^\infty_T(\dot{H}^{\frac{3}{2}-\frac{\sigma}{2}})}
		\\&\lesssim T^{\frac{\sigma}{4}}\|u\|_{L^\infty_T({H}^{\frac{1}{2}+\sigma})}\|B\|_{L^\infty_T({H}^{\frac{3}{2}})}^{\frac{\sigma}{2}}\|\nabla B\|_{L^2_T({H}^{\frac{3}{2}})}^{1-\frac{\sigma}{2}}\\&\qquad+T^{\frac{\sigma}{4}}\|u\|_{L^\infty_T({H}^{\frac{1}{2}+\sigma})}^{\frac{\sigma}{2}}\|\nabla u\|_{L^2_T({H}^{\frac{1}{2}+\sigma})}^{1-\frac{\sigma}{2}}\|B\|_{L^\infty_T({H}^{\frac{3}{2}})}.
	\end{split}
\end{equation}
For the commutator term, applying \lemref{commutator} with $(s,r,\eta,\theta)=(\frac{3}{2},2,\frac{1}{2},\frac{1}{2})$ yields that\begin{equation}\label{target2}
	\begin{split}
\Big\|\|2^{j\frac{3}{2}}\|[\dot{\Delta}_j,B\times](\nabla\times B)\|_{L^2}\|_{\ell^2(\mathbb{Z})}\Big\|_{L^2_T}\lesssim&  \|B\|_{L^4_T(\dot{H}^{2})}\|\nabla\times B\|_{L^4_T(\dot{H}^{1})}\lesssim \|B\|_{L^4_T(\dot{H}^{2})}^2.
	\end{split}
\end{equation}
Thus, plugging \eqref{target1}-\eqref{target2} into \eqref{gg2} and \eqref{gg3} yields \eqref{apriores} and \eqref{apriores2} respectively.
\end{proof}

\noindent\textbf{Second step: Construction of approximate solutions}\par 
We will construct a sequence of smooth approximate solutions $\{(u_n,B_n)\}$. For each positive integer $n$, the spectral cut-off operator $\mathcal{J}_n$ is defined as\begin{equation*}
	\mathcal{F}(\mathcal{J}_nf)(\xi)=1_{\{|\xi|\leq n\}}\mathcal{F}(f)(\xi).
\end{equation*}We smooth out the initial data $(u_0,B_0)$ by $
u_{0,n}=\mathcal{J}_nu_{0}$ and $B_{0,n}=\mathcal{J}_nB_{0}$. It is clear that $(u_{0,n},B_{0,n})\in H^\infty$ where $H^\infty:=\bigcap_{s>0}H^s$, and \begin{equation}\label{tends}
	\begin{split}
(u_{0,n},B_{0,n})\rightarrow (u_{0},B_{0})\quad\text{in}\quad H^{\frac{1}{2}+\sigma}\times H^{\frac{3}{2}}.
	\end{split}
\end{equation}\par
Consider the Hall-MHD system supplemented with smooth initial data $(u_{0,n},B_{0,n})$:
\begin{equation}\label{trun}
	\begin{cases}
		\partial_tu_n-\Delta u_n+u_n\cdot\nabla u_n+\nabla P_n=B_n\cdot\nabla B_n,\\
		\partial_tB_n-\Delta B_n+u_n\cdot\nabla B_n+\nabla\times((\nabla\times B_n)\times B_n)=B_n\cdot\nabla u_n,\\
		\Div u_n=\Div B_n=0,\\
		u_n(x,0)=u_{0,n}(x),\quad B_n(x,0)=B_{0,n}(x).
	\end{cases}
\end{equation}According to the local well-posedness result for smooth initial data (see \cite{MR3208454}), the above system admits a unique solution $(u_n,B_n)\in C([0,T_n);H^\infty)$ on some maximal time interval $[0,T_n)$. Furthermore, the blow-up criterion in \cite[Theorem 2]{MR3186849} indicates that if the maximal time $T_n<\infty$, there holds\begin{equation}\label{blowup}
\int_{0}^{T_n}(\|u_n\|_{BMO}^2+\|\nabla B_n\|_{BMO}^2)dt=\infty.
\end{equation}
\\
\noindent\textbf{Third step: Uniform bounds for the approximate solutions}\par 
Notice that the definition of cut-off operator ensures $\|\mathcal{J}_nu_0\|_{{H}^{\frac{1}{2}+\sigma}}\leq\|u_0\|_{{H}^{\frac{1}{2}+\sigma}}$, $\|\mathcal{J}_nB_0\|_{{H}^{\frac{3}{2}}}\leq\|B_0\|_{{H}^{\frac{3}{2}}}$ and $\|\dot{\Delta}_j\mathcal{J}_nB_0\|_{L^2}\leq\|\dot{\Delta}_jB_0\|_{L^2}$. Hence, denote\begin{equation*}
	\begin{split}
		X_n(t)&:=\|u_n\|_{L^\infty_t({H}^{\frac{1}{2}+\sigma})}+\|\nabla u_n\|_{L^2_t({H}^{\frac{1}{2}+\sigma})}+\|B_n\|_{L^\infty_t({H}^{\frac{3}{2}})}+\|\nabla B_n\|_{L^2_t({H}^{\frac{3}{2}})},\\
		Y_n(t)&:=\|B_n\|_{{L}^4_t(\dot{H}^{2})},
	\end{split}
\end{equation*}we can deduce from \propref{aprior} that for all $t\in [0,T_n)$,\begin{equation}\label{fd}
\begin{split}
	X_n(t)&\leq C_1\Big( \|u_0\|_{{H}^{\frac{1}{2}+\sigma}}+\|B_0\|_{{H}^{\frac{3}{2}}}+t^{\frac{\sigma}{4\sigma+6}}X_n^2(t)+t^{\frac{1}{2}-\frac{\sigma}{4}}X_n^2(t)+2t^{\frac{\sigma}{4}}X_n^2(t)+Y_n^2(t)\Big),\\
	Y_n(t)&\leq C_2\Big( \Big[\sum_{j\in\mathbb{Z}}({1-e^{-4c2^{2j}t}})^{\frac{1}{2}}(2^{j\frac{3}{2}}\|\dot{\Delta}_jB_0\|_{L^2})^2\Big]^{\frac{1}{2}}+2t^{\frac{\sigma}{4}}X_n^2(t)+Y_n^2(t)\Big).
\end{split}
\end{equation}Next, we define\begin{equation}\label{fd1}
T_{n}^\prime:=\sup\Big\{t\in [0,T_n): X_n(t)\leq 2C_1(\|u_0\|_{{H}^{\frac{1}{2}+\sigma}}+\|B_0\|_{{H}^{\frac{3}{2}}}),\,\,Y_n(t)\leq \delta  \Big\},
\end{equation}where the constant $\delta$ will be determined later, and choose a positive constant $T_*$ such that\begin{equation}\label{fd2}
\begin{cases}
T_*^{\frac{\sigma}{4\sigma+6}}4C_1^2(\|u_0\|_{{H}^{\frac{1}{2}+\sigma}}+\|B_0\|_{{H}^{\frac{3}{2}}})^2\leq\frac{1}{8}(\|u_0\|_{{H}^{\frac{1}{2}+\sigma}}+\|B_0\|_{{H}^{\frac{3}{2}}}),\\
T_*^{\frac{1}{2}-\frac{\sigma}{4}}4C_1^2(\|u_0\|_{{H}^{\frac{1}{2}+\sigma}}+\|B_0\|_{{H}^{\frac{3}{2}}})^2\leq\frac{1}{8}(\|u_0\|_{{H}^{\frac{1}{2}+\sigma}}+\|B_0\|_{{H}^{\frac{3}{2}}}),\\
\Big[\sum_{j\in\mathbb{Z}}({1-e^{-4c2^{2j}T_*}})^{\frac{1}{2}}(2^{j\frac{3}{2}}\|\dot{\Delta}_jB_0\|_{L^2})^2\Big]^{\frac{1}{2}}\leq\frac{\delta}{8C_2},\\
T_*^{\frac{\sigma}{4}}4C_1^2(\|u_0\|_{{H}^{\frac{1}{2}+\sigma}}+\|B_0\|_{{H}^{\frac{3}{2}}})^2\leq\frac{\delta}{8C_2}.
\end{cases}
\end{equation}Note that $T_*$ is a fixed constant independent of $n$. Let us explain why the third condition in \eqref{fd2} can be fulfilled. Actually, since $
\|B_0\|_{\dot{H}^{\frac{3}{2}}}=\Big[\sum_{j\in\mathbb{Z}}(2^{j\frac{3}{2}}\|\dot{\Delta}_jB_0\|_{L^2})^2\Big]^{\frac{1}{2}}<\infty
$, taking $N_0$ large enough can ensure that\begin{equation}\label{sum1}
\Big[\sum_{|j|>N_0}(2^{j\frac{3}{2}}\|\dot{\Delta}_jB_0\|_{L^2})^2\Big]^{\frac{1}{2}}\leq\frac{\delta}{16C_2}.
\end{equation}For the remaining finite terms, choosing $T_{*}$ sufficiently small such that\begin{equation}\label{sum2}
\Big[\sum_{|j|\leq N_0}({1-e^{-4c2^{2j}T_*}})^{\frac{1}{2}}(2^{j\frac{3}{2}}\|\dot{\Delta}_jB_0\|_{L^2})^2\Big]^{\frac{1}{2}}\leq\frac{\delta}{16C_2}.
\end{equation}Therefore,\begin{equation}\label{sum3}
\begin{split}
	&\Big[\sum_{j\in\mathbb{Z}}({1-e^{-4c2^{2j}T_*}})^{\frac{1}{2}}(2^{j\frac{3}{2}}\|\dot{\Delta}_jB_0\|_{L^2})^2\Big]^{\frac{1}{2}}\\&\leq \Big[\sum_{|j|\leq N_0}({1-e^{-4c2^{2j}T_*}})^{\frac{1}{2}}(2^{j\frac{3}{2}}\|\dot{\Delta}_jB_0\|_{L^2})^2\Big]^{\frac{1}{2}}\\&\quad+ \Big[\sum_{|j|>N_0}(2^{j\frac{3}{2}}\|\dot{\Delta}_jB_0\|_{L^2})^2\Big]^{\frac{1}{2}}\\&\leq \frac{\delta}{8C_2}.
\end{split}
\end{equation}\par We assert that when $\delta$ is sufficiently small, there holds $T_*<T_n^\prime$. Actually, if $T_n^\prime\leq T_*<\infty$, it follows from \eqref{fd}, \eqref{fd1} and \eqref{fd2} that for all $t\in[0,T_n^\prime)$,\begin{equation}\label{hahauni}
\begin{split}
	X_n(t)&\leq\frac{5C_1}{4}\Big( \|u_0\|_{{H}^{\frac{1}{2}+\sigma}}+\|B_0\|_{{H}^{\frac{3}{2}}}\Big)+\frac{C_1\delta }{4C_2}+C_1\delta^2,\\
	Y_n(t)&\leq \frac{3\delta}{8}+C_2\delta^2.
\end{split}
\end{equation}Taking $\delta$ small enough such that\begin{equation*}
\begin{split}
\frac{C_1\delta}{4C_2}+C_1\delta^2\leq \frac{C_1}{4}( \|u_0\|_{{H}^{\frac{1}{2}+\sigma}}+\|B_0\|_{{H}^{\frac{3}{2}}})\quad\text{and}\quad C_2\delta\leq \frac{1}{8},
\end{split}
\end{equation*}which leads to that for all $t\in[0,T_n^\prime)$,\begin{equation}
 	\begin{split}
 		X_n(t)&\leq\frac{3C_1}{2}\Big( \|u_0\|_{{H}^{\frac{1}{2}+\sigma}}+\|B_0\|_{{H}^{\frac{3}{2}}}\Big),\\
 		Y_n(t)&\leq \frac{\delta}{2}.
 	\end{split}
 \end{equation}At this stage, by the definition of $T_n^\prime$ and the continuity of $X_n(t)$ and $Y_n(t)$, one may conclude that $T_n=T_n^\prime<\infty$. However, due to the embedding $\dot{H}^{\frac{3}{2}}(\mathbb{R}^3)\hookrightarrow BMO(\mathbb{R}^3)$ (see \cite[Theorem 1.48]{MR2768550}),\begin{equation}\label{blowtime}
	\begin{split}
		\int_{0}^{T_n}(\|u_n\|_{BMO}^2+\|\nabla B_n\|_{BMO}^2)dt&\leq C\int_{0}^{T_n}(\|\nabla u_n\|_{\dot{H}^{\frac{1}{2}}}^2+\|\nabla B_n\|_{\dot{H}^{\frac{3}{2}}}^2)dt\\
	&\leq C\sup_{t\in [0,T_{n}^\prime)}X_n^2(t)<\infty,
	\end{split}
\end{equation}which contradicts with \eqref{blowup}. Thus, $T_*<T_n^\prime\leq T_n$ for all $n\geq 1$. By the definition of $T_n^\prime$, we have for all $n\geq 1$,\begin{equation}\label{realunibound}
\begin{split}
&\|u_n\|_{L^\infty_{T_*}({H}^{\frac{1}{2}+\sigma})}+\|\nabla u_n\|_{L^2_{T_*}({H}^{\frac{1}{2}+\sigma})}+\|B_n\|_{L^\infty_{T_*}({H}^{\frac{3}{2}})}+\|\nabla B_n\|_{L^2_{T_*}({H}^{\frac{3}{2}})}\\
&\leq 2C_1(\|u_0\|_{{H}^{\frac{1}{2}+\sigma}}+\|B_0\|_{{H}^{\frac{3}{2}}}).
\end{split}
\end{equation}

\noindent\textbf{Fourth step: Existence and uniqueness}\par 
We first show that the solution sequence $\{(u_n,B_n)\}$ constructed in the second step tends to some limit $(u,B)$ in $C([0,T_*];L^2)$. Making the difference between the system \eqref{trun} with $m$ and $n$, and taking the inner product with $u_m-u_n$ and $B_m-B_n$ respectively, one has\begin{equation}\label{total}
	\begin{split}
&\frac{1}{2}\frac{d}{dt}\Big(\|u_m-u_n\|_{L^2}^2+\|B_m-B_n\|_{L^2}^2\Big)+\|\nabla(u_m-u_n)\|_{L^2}^2+\|\nabla(B_m-B_n)\|_{L^2}^2\\
=&-\Big(u_m\cdot\nabla u_m-u_n\cdot\nabla u_n,u_m-u_n\Big)+\Big(B_m\cdot\nabla B_m-B_n\cdot\nabla B_n,u_m-u_n\Big)\\
&-\Big(u_m\cdot\nabla B_m-u_n\cdot\nabla B_n,B_m-B_n\Big)+\Big(B_m\cdot\nabla u_m-B_n\cdot\nabla u_n,B_m-B_n\Big)\\
&-\Big(\nabla\times((\nabla\times B_m)\times B_m))-\nabla\times((\nabla\times B_n)\times B_n)),B_m-B_n\Big)\\
=&A_1+A_2+A_3+A_4+A_5.
	\end{split}
\end{equation}We split $A_2$ and $A_4$ as follows:\begin{equation}
\begin{split}
A_2=&\Big((B_m-B_n)\cdot\nabla B_m,u_m-u_n\Big)+\Big(B_n\cdot\nabla (B_m-B_n),u_m-u_n\Big),\\
=&A_{2,1}+A_{2,2},\\
A_4=&\Big((B_m-B_n)\cdot\nabla u_m,B_m-B_n\Big)+\Big(B_n\cdot\nabla (u_m-u_n),B_m-B_n\Big)\\
=&A_{4,1}+A_{4,2}.
\end{split}
\end{equation}
Applying H\"{o}lder's inequality, \lemref{GN} and Young's inequality gives
\begin{equation}
	\begin{split}
		A_{2,1}&\leq \|B_m-B_n\|_{L^2}\|\nabla B_m\|_{{L}^3}\| u_m-u_n\|_{L^6}\\
		&\leq C\|B_n-B_m\|_{L^2}\|\nabla B_m\|_{{H}^{\frac{1}{2}}}\|\nabla (u_n-u_m)\|_{L^2}\\
		&\leq C\|\nabla B_m\|_{{H}^{\frac{1}{2}}}^2\|B_m-B_n\|_{L^2}^2+\frac{1}{8}\|\nabla (u_m-u_n)\|_{L^2}^2
	\end{split}
\end{equation}
Proceeding in a similar way,
\begin{equation}
	\begin{split}
		A_{4,1}&\leq C\|\nabla u_m\|_{{H}^{\frac{1}{2}}}^2\|B_n-B_m\|_{L^2}^2+\frac{1}{8}\|\nabla (B_m-B_n)\|_{L^2}^2.
	\end{split}
\end{equation}
By the divergence-free property, it is easily seen that $A_{2,2}+A_{4,2}=0$. As a result,
\begin{equation}\label{total1}
	\begin{split}
		A_2+A_4=&A_{2,1}+A_{4,1}\\
		\leq&C(\|\nabla B_m\|_{{H}^{\frac{3}{2}}}^2+\|\nabla u_m\|_{{H}^{\frac{1}{2}+\sigma}}^2)\|B_m-B_n\|_{L^2}^2
	\\	&+\frac{1}{8}\|\nabla (u_m-u_n)\|_{L^2}^2+\frac{1}{8}\|\nabla (B_m-B_n)\|_{L^2}^2
	\end{split}
\end{equation}Similarly,\begin{equation}\label{total2}
	\begin{split}
		A_1\leq&C\|\nabla u_m\|_{{H}^{\frac{1}{2}+\sigma}}^2\|u_m-u_n\|_{L^2}^2+\frac{1}{8}\|\nabla (u_m-u_n)\|_{L^2}^2,\\
		A_3\leq&C\|\nabla B_m\|_{{H}^{\frac{3}{2}}}^2\|u_m-u_n\|_{L^2}^2+\frac{1}{8}\|\nabla (B_m-B_n)\|_{L^2}^2.
	\end{split}
\end{equation}For the last term $A_5$, we rewrite it as\begin{equation}
\begin{split}
A_5=&-\Big(\nabla\times((\nabla\times (B_m-B_n))\times B_m),B_m-B_n\Big)\\
&-\Big(\nabla\times((\nabla\times B_n)\times (B_m-B_n)),B_m-B_n\Big)\\
=&A_{5,1}+A_{5,2}.
\end{split}
\end{equation}
Obviously,
\begin{equation}
	\begin{split}
		A_{5,1}=-\Big((\nabla\times (B_m-B_n))\times B_m,\nabla\times(B_m-B_n)\Big)=0.
	\end{split}
\end{equation}
Therefore, we get by using H\"{o}lder's inequality, \lemref{GN}, interpolation and Young's inequality that
\begin{equation}\label{total3}
	\begin{split}
		A_5&=A_{5,2}\\&=-\Big((\nabla\times B_n)\times (B_m-B_n),\nabla\times(B_m-B_n)\Big)\\
		&\leq \|\nabla\times B_n\|_{L^4}\|B_m-B_n\|_{L^4}\|\nabla\times(B_m-B_n)\|_{L^2}\\
		&\leq C\|B_n\|_{\dot{H}^{\frac{7}{4}}}\|B_m-B_n\|_{\dot{H}^{\frac{3}{4}}}\|\nabla(B_m-B_n)\|_{L^2}\\
		&\leq C\Big(\|B_n\|_{\dot{H}^{\frac{3}{2}}}^{\frac{3}{4}}\|\nabla B_n\|_{\dot{H}^{\frac{3}{2}}}^{\frac{1}{4}}\Big)\Big(\|B_m-B_n\|_{{L}^{2}}^{\frac{1}{4}}\|\nabla (B_m-B_n)\|_{L^2}^{\frac{3}{4}}\Big)\|\nabla(B_m-B_n)\|_{L^2}\\
		&\leq C\|B_n\|_{{H}^{\frac{3}{2}}}^{\frac{3}{4}}\|\nabla B_n\|_{{H}^{\frac{3}{2}}}^{\frac{1}{4}}\|B_m-B_n\|_{{L}^{2}}^{\frac{1}{4}}\|\nabla(B_m-B_n)\|_{L^2}^{\frac{7}{4}}\\
		&\leq C\|B_n\|_{H^{\frac{3}{2}}}^{6}\|\nabla B_n\|_{{H}^{\frac{3}{2}}}^{2}\|B_m-B_n\|_{{L}^{2}}^{2}+\frac{1}{8}\|\nabla(B_m-B_n)\|_{L^2}^{2}.
	\end{split}
\end{equation}Plugging \eqref{total1}, \eqref{total2} and \eqref{total3} into \eqref{total}, we arrive at\begin{equation}\label{totally}
\begin{split}
	&\frac{d}{dt}\Big(\|u_m-u_n\|_{L^2}^2+\|B_m-B_n\|_{L^2}^2\Big)+\|\nabla(u_m-u_n)\|_{L^2}^2+\|\nabla(B_m-B_n)\|_{L^2}^2\\
	&\leq C\Big(\|\nabla B_m\|_{{H}^{\frac{3}{2}}}^2+\|\nabla u_m\|_{{H}^{\frac{1}{2}+\sigma}}^2+\|B_n\|_{H^{\frac{3}{2}}}^{6}\|\nabla B_n\|_{{H}^{\frac{3}{2}}}^{2}\Big)\Big(\|u_m-B_n\|_{L^2}^2+\|B_m-B_n\|_{L^2}^2\Big).
\end{split}
\end{equation}By Gronwall's inequality and uniform bounds in \eqref{realunibound}, one has\begin{equation}\label{totally1}
\begin{split}
\sup_{0\leq t\leq T_*}\|u_m-u_n\|_{L^2}^2+\|B_m-B_n\|_{L^2}^2\leq C\Big(\|u_{0,m}-u_{0,n}\|_{L^2}^2+\|B_{0,m}-B_{0,n}\|_{L^2}^2\Big)
\end{split}
\end{equation}for some constant $C=C(\|u_0\|_{{H}^{\frac{1}{2}+\sigma}},\|B_0\|_{{H}^{\frac{3}{2}}})$. Obviously, the right-hand side above tends to zero as $n,m\rightarrow \infty$, which means that $\{(u_n,B_n)\}$ is a Cauchy sequence in $C([0,T_*];L^2)$. Consequently, there is a function pair $(u,B)\in C([0,T_*];L^2)$ such that\begin{equation}
(u_n,B_n)\rightarrow (u,B)\quad\text{in}\quad C([0,T_*];L^2),
\end{equation}From \eqref{totally}, we also get\begin{equation}
(\nabla u_n,\nabla B_n)\rightarrow (\nabla u,\nabla B)\quad\text{in}\quad L^2(0,T_*;L^2).
\end{equation}Using the uniform bounds in \eqref{realunibound} and interpolation, one can easily show that for any $s_1\in(0,\frac{1}{2}+\sigma)$ and any $s_2\in(0,\frac{3}{2})$,\begin{equation}\label{hello1}
\begin{split}
	u_n\rightarrow u\quad&\text{in}\quad C([0,T_*];H^{s_1}),\\
	\nabla u_n\rightarrow \nabla u\quad&\text{in}\quad L^2(0,T_*;H^{s_1}),\\
	B_n\rightarrow B\quad&\text{in}\quad C([0,T_*];H^{s_2}),\\
	\nabla B_n\rightarrow \nabla B\quad&\text{in}\quad L^2(0,T_*;H^{s_2}).
\end{split}
\end{equation}
Furthermore, due to the uniform bounds in \eqref{realunibound}, the Banach-Alaoglu theorem ensures that there exists a subsequence $\{(u_{n,k},B_{n,k})\}$ such that\begin{equation}\label{hello2}
	\begin{split}
		u_{n,k}\rightharpoonup u\quad&\text{weakly* in}\quad L^\infty(0,T_*;H^{\frac{1}{2}+\sigma})\cap L^2(0,T_*;H^{\frac{3}{2}+\sigma}),\\ B_{n,k}\rightharpoonup B\quad&\text{weakly* in}\quad L^\infty(0,T_*;H^{\frac{3}{2}})\cap L^2(0,T_*;H^{\frac{5}{2}}).
	\end{split}
\end{equation}\par
In order to verify the limit $(u,B)$ is indeed the solution to \eqref{main eq}, we need to pass to the limit in the nonlinear terms in \eqref{trun}. This procedure is relatively simple, as an example, we only deal with the Hall term, the other nonlinear terms can be treated by a similar way. Observe that\begin{equation}
\begin{split}
&\nabla\times((\nabla\times B_n)\times B_n)-\nabla\times((\nabla\times B)\times B)\\
=&\nabla\times((\nabla\times (B_n-B))\times B_n)+\nabla\times((\nabla\times B)\times (B_n-B)).
\end{split}
\end{equation}Using H\"{o}lder's inequality and \lemref{GN} gives\begin{equation}
\begin{split} 
\|\nabla\times((\nabla\times (B_n-B))\times B_n)\|_{\dot{H}^{-1}}&\leq C \|\nabla\times (B_n-B)\|_{L^3}\|B_n\|_{L^6}\\
&\leq C\|\nabla(B_n-B)\|_{{H}^{\frac{1}{2}}}\|B_n\|_{H^1},\\\|\nabla\times((\nabla\times B)\times (B_n-B))\|_{\dot{H}^{-1}}&\leq C\|\nabla\times B\|_{L^3}\|B_n-B\|_{L^6}\\
&\leq C\|\nabla B\|_{{H}^{\frac{1}{2}}}\|B_n-B\|_{H^1},
\end{split}
\end{equation}In view of \eqref{realunibound}, \eqref{hello1} and \eqref{hello2}, we have\begin{equation*}
\nabla\times((\nabla\times B_n)\times B_n)\rightarrow\nabla\times((\nabla\times B)\times B)\quad\text{in}\quad L^2(0,T_*;\dot{H}^{-1}).
\end{equation*}Thus, we end up with that the limit $(u,B)$ solves the Hall-MHD system \eqref{main eq} with some suitable pressure function $P$. The proof of uniqueness is quite similar to the $L^2$-convergence procedure above and so is omitted.\par 
What is left is to prove the continuity of $(u,B)$ in $H^{\frac{1}{2}+\sigma}\times H^{\frac{3}{2}}$ with respect to time. We see that $u$ satisfies the heat equation\begin{equation}
	\begin{split}
\partial_tu-\Delta u=\mathcal{P}(B\cdot\nabla B-u\cdot\nabla u),
	\end{split}
\end{equation}where $\mathcal{P}:=\Id+(-\Delta)^{-1}\nabla\Div$ is the Leray projection operator. Since $\mathcal{P}$ is bounded on Sobolev spaces, it follows from \eqref{target1} and \eqref{target11} that the right-hand side above belongs to $L^2(0,T_*;\dot{H}^{\sigma-\frac{1}{2}})$. Therefore, by the standard property of solutions to the linear heat equation (see \cite[Proposition A.4]{MR4193644}), we can deduce that $u\in C([0,T_*];H^{\frac{1}{2}+\sigma})$. However, the classical product laws do not allow us to prove that $\nabla\times((\nabla\times B)\times B)$ belongs $L^2(0,T_*;\dot{H}^{\frac{1}{2}})$, so we can not obtain the continuity of $B$ in $H^{\frac{3}{2}}$ by the way mentioned above. Observe that $(u(t),B(t))\in H^{\frac{3}{2}+\sigma}\times H^{\frac{5}{2}}$ almost everywhere on $[0,T_*]$ since $(u,B)\in L^2(0,T_*;H^{\frac{3}{2}+\sigma}\times H^{\frac{5}{2}})$. Hence, for any $\delta>0$, there exists some $T_1\in (0,\delta)$ such that $(u(T_1),B(T_1))\in H^{r}\times H^{r}$ with $r=\min\{\frac{3}{2}+\sigma,\frac{5}{2}\}>\frac{3}{2}$. Taking $(u(T_1),B(T_1))$ as initial data for \eqref{main eq} at $t=T_1$, according to the local well-posedness result for the Hall-MHD system in $H^s$ with $s>\frac{3}{2}$ (see \cite{MR4060362,MR3770153}), we obtain a solution $(\widetilde{u},\widetilde{B})$ on some time interval $[T_1,T_1^\prime]\subset [0,T_*]$ with $(\widetilde{u},\widetilde{B})\in C([T_1,T_1^\prime]; H^r)\cap L^2(T_1,T_1^\prime; H^{r+1})$. Thanks to the uniqueness stated above, we have $(u,B)=(\widetilde{u},\widetilde{B})$ on $[T_1,T_1^\prime]$. By a same way, we also can get that there exists some $T_2\in (T_1,\delta)$ such that $(u,B)\in C([T_2,T_2^\prime]; H^{r+1})\cap L^2(T_2,T_2^\prime; H^{r+2})$ on some time interval $[T_2,T_2^\prime]\subset [0,T_*]$. Arguing as in \eqref{blowtime} (the blow-up criterion \eqref{blowup} is corresponding to the local smooth solution in $H^s$ with $s>\frac{5}{2}$, see \cite[Theorem 2]{MR3186849}), $T_2^\prime$ can be choosen to be equal to $T_*$, which together with the arbitrariness of $\delta$ yields that $(u,B)\in C((0,T_*];H^{r+1})$. It is not hard to see that by iteration, we can prove that $(u,B)\in C((0,T_*];H^{\infty})$. On the other hand, remember that $B\in C([0,T_*];H^{s_2})$ with $s_2<\frac{3}{2}$ and $B\in L^\infty(0,T_*;H^{\frac{3}{2}})$, since $H^{-s_2}$ is dense in $H^{-\frac{3}{2}}$, by means of an $\frac{\epsilon}{2}$-argument, it is easy to show that $B$ is continuous in the weak topology of $H^{\frac{3}{2}}$ with respect to time, which implies that\begin{equation}
\varliminf _{t\rightarrow 0^+}\|B(t)\|_{H^{\frac{3}{2}}}\geq \|B_0\|_{H^{\frac{3}{2}}}.
\end{equation}Besides, it follows from \eqref{important} with $p=\infty$ that for all $t\in (0,T_*]$,\begin{equation}
 \begin{split}
 	\|B\|_{{L}_t^\infty(\dot{H}^{\frac{3}{2}})}\leq& \|B_0\|_{\dot{H}^{\frac{3}{2}}}+C\Big(\|B\cdot\nabla u\|_{L^2_t(\dot{H}^{\frac{1}{2}})}+\|u\cdot\nabla B\|_{L^2_t(\dot{H}^{\frac{1}{2}})}\\&+\Big\|\|2^{j\frac{3}{2}}\|[\dot{\Delta}_j,B\times](\nabla\times B)\|_{L^2}\|_{\ell^2(\mathbb{Z})}\Big\|_{L_t^2} \Big),
 \end{split}
\end{equation}so taking $t\rightarrow 0^+$ gives\begin{equation}
 	\varlimsup _{t\rightarrow 0^+}\|B(t)\|_{\dot{H}^{\frac{3}{2}}}\leq \|B_0\|_{\dot{H}^{\frac{3}{2}}}.
\end{equation}Meanwhile, recall that $B\in C([0,T_*];L^2)$, we thus have\begin{equation}
 	\lim _{t\rightarrow 0^+}\|B(t)\|_{{H}^{\frac{3}{2}}}=\|B_0\|_{{H}^{\frac{3}{2}}},
\end{equation}namely, $B$ is strongly right continuous at $t=0$. As a result, $B\in C([0,T_*];H^{\frac{3}{2}})$.\par 
This means that the proof of \thmref{thm1} is finished.
\section{Proof of \thmref{thm2}}\label{sec4}
From the previous section, we see that the core of the proof of \thmref{thm1} is to establish the uniform bounds for the approximate solutions. In what follows, we only show how to use \eqref{extend eq} to derive the desired uniform bounds, the remainder of the proof of \thmref{thm2} can proceed in a similar way as that in the previous section and thus is omitted.\par 
\begin{prop}\label{secondproof}
	Assume that $\mu=\nu$. If $(u,B)$ is a smooth solution to \eqref{main eq} on the maximal time interval $[0,T)$, then there exists a positive time $T_*<T$ and a positive constant $C$ such that\begin{equation}
		\begin{split}
			\|u\|_{L^\infty_{T_*}({H}^{\frac{1}{2}})}+\|\nabla u\|_{L^2_{T_*}({H}^{\frac{1}{2}})}+\|B\|_{L^\infty_{T_*}({H}^{\frac{3}{2}})}+\|\nabla B\|_{L^2_{T_*}({H}^{\frac{3}{2}})}\leq C(\|u_0\|_{{H}^{\frac{1}{2}}}+\|B_0\|_{{H}^{\frac{3}{2}}}+1),
		\end{split}
	\end{equation}where $T_*$ depends on the initial data and is uniform with respect to all approximate solutions.
\end{prop}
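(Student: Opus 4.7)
The plan is to work with the reformulation \eqref{extend eq} and control the triple $(u,B,v)$ with $v=u-\kappa\nabla\times B$ rather than the pair $(u,B)$ directly. The key algebraic identity is $\nabla\times B=\kappa^{-1}(u-v)$, so combined with $\Div B=0$ one gets
\begin{equation*}
\|B\|_{\dot{H}^{3/2}}\lesssim\|\nabla\times B\|_{\dot{H}^{1/2}}\lesssim\|u\|_{\dot{H}^{1/2}}+\|v\|_{\dot{H}^{1/2}}.
\end{equation*}
Hence, the proposition reduces to deriving uniform bounds for $(u,B,v)$ at the common critical regularity $L^\infty_{T_*}(\dot{H}^{1/2})\cap L^2_{T_*}(\dot{H}^{3/2})$, supplemented with the basic $L^2$ energy identity for $(u,B)$ on \eqref{main eq} obtained exactly as in \eqref{nice0}; note that the initial value $v_0=u_0-\kappa\nabla\times B_0$ lies in $H^{1/2}$ so it is admissible at this level.

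For $u$, I would apply $\Lambda^{1/2}$ to the first equation of \eqref{extend eq}, test against $\Lambda^{1/2}u$, and bound $\|u\cdot\nabla u\|_{L^2_{T_*}(\dot{H}^{-1/2})}\lesssim\|u\otimes u\|_{L^2_{T_*}(\dot{H}^{1/2})}$ (and similarly for $B\cdot\nabla B$) by means of \lemref{product3} with $(s,\eta,\theta)=(1/2,1/2,1/2)$ and the interpolation $\|f\|_{\dot{H}^1}\lesssim\|f\|_{\dot{H}^{1/2}}^{1/2}\|f\|_{\dot{H}^{3/2}}^{1/2}$. Up to a small positive power of $T_*$, this produces a super-linear power of the energy norm on the right-hand side, of the same shape as \eqref{apriores}.

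The decisive step is the $\dot{H}^{1/2}$ estimate of $v$, which I would carry out by Littlewood--Paley localization in the spirit of \eqref{Bes}--\eqref{important}. Applying $\dot{\Delta}_j$ to the third equation of \eqref{extend eq} and pairing with $\dot{\Delta}_jv$, the quasilinear Hall-type term is rewritten as
\begin{equation*}
-\kappa\bigl(\dot{\Delta}_j\nabla\times((\nabla\times v)\times B),\dot{\Delta}_jv\bigr)=-\kappa\bigl([\dot{\Delta}_j,B\times](\nabla\times v),\nabla\times\dot{\Delta}_jv\bigr),
\end{equation*}
where the leading contribution $(B\times(\nabla\times\dot{\Delta}_jv),\nabla\times\dot{\Delta}_jv)$ vanishes pointwise from $(B\times w)\cdot w=0$. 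The remaining commutator is then controlled via \lemref{commutator} with $(s,r,\eta,\theta)=(1/2,2,1/2,1/2)$, which fits exactly within the available regularity $B\in\dot{H}^{3/2}$, $v\in\dot{H}^{1/2}$. The other nonlinear sources $B\cdot\nabla B$, $u\cdot\nabla u$, $\nabla\times(v\times u)$ and $2\kappa\nabla\times(v\cdot\nabla B)$ are handled by \lemref{product3}, invoking the identity $\|B\|_{\dot{H}^{3/2}}\lesssim\|u\|_{\dot{H}^{1/2}}+\|v\|_{\dot{H}^{1/2}}$ whenever $B$ is coupled with $v$. After dividing by $\|\dot{\Delta}_jv\|_{L^2}$, multiplying by $2^{j/2}$, taking the $\ell^2(\mathbb{Z})$-norm, and integrating in time (mirroring \eqref{juanji}--\eqref{important}), this yields the bound for $v$ in $\widetilde{L}^\infty_{T_*}(\dot{H}^{1/2})\cap\widetilde{L}^2_{T_*}(\dot{H}^{3/2})$, whence the $H^{3/2}$ bound on $B$ follows from $\nabla\times B=\kappa^{-1}(u-v)$.

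Collecting all the estimates produces a self-improving inequality of the same shape as \eqref{fd}, and a bootstrap argument mirroring \eqref{fd1}--\eqref{realunibound} selects a time $T_*>0$ depending only on $\|u_0\|_{H^{1/2}}+\|B_0\|_{H^{3/2}}$, uniform in the approximation parameter. The main obstacle is precisely the quasilinear term in the $v$ equation: without the cancelation $(B\times w)\cdot w=0$ one would face the trilinear expression $\|B\|_{\dot{H}^{3/2}}\|v\|_{\dot{H}^{3/2}}\|v\|_{\dot{H}^{1/2}}$ which cannot be absorbed into the dissipation at a purely $\dot{H}^{1/2}$ level. A secondary subtlety is $\nabla\times(v\cdot\nabla B)$: since $v$ and $\nabla B$ both live only in $\dot{H}^{1/2}$ the product is borderline and requires a careful splitting via \lemref{product3} to transfer one derivative onto whichever factor has higher time-integrability.
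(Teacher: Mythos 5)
Your overall architecture coincides with the paper's: pass to the reformulation \eqref{extend eq}, exploit the cancelation $\bigl(B\times(\nabla\times\dot{\Delta}_jv),\nabla\times\dot{\Delta}_jv\bigr)=0$ so that only the commutator $[\dot{\Delta}_j,B\times](\nabla\times v)$ survives, recover $B$ from $u$ and $v$ via $B=(-\Delta)^{-1}\nabla\times(u-v)$, and run a Littlewood--Paley/Duhamel estimate as in \eqref{juanji}--\eqref{important}. However, there is a genuine gap in your smallness mechanism. You claim that the quadratic source terms come with ``a small positive power of $T_*$'' and yield an inequality ``of the same shape as \eqref{apriores}''/\eqref{fd}. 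At the critical index $\sigma=0$ this is false: the product law with $(s,\eta,\theta)=(\tfrac12,\tfrac12,\tfrac12)$ together with interpolation gives exactly
\begin{equation*}
\|u\otimes u\|_{L^2_{T}(\dot{H}^{1/2})}\lesssim\|u\|_{L^4_{T}(\dot{H}^{1})}^{2}\lesssim\|u\|_{L^\infty_{T}(\dot{H}^{1/2})}\|u\|_{L^2_{T}(\dot{H}^{3/2})},
\end{equation*}
which is scaling invariant, so H\"older in time leaves no room for any factor $T^{\alpha}$ with $\alpha>0$ (the exponents $T^{\sigma/(4\sigma+6)}$, $T^{\sigma/4}$ in \eqref{apriores} all degenerate to $T^{0}$ as $\sigma\to0$ --- this is precisely the obstruction that forces the separate Theorem~\ref{thm2}). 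Consequently the bootstrap you describe reads $X\leq C(X_0+X^2)$ with no small prefactor and cannot close for large data. The paper's actual fix is to run the Duhamel estimate \eqref{cjbyfq}--\eqref{muchim} for \emph{all three} unknowns $u,B,v$ simultaneously, close a purely quadratic inequality \eqref{cool} for the single auxiliary quantity $\|(u,B,v)\|_{L^4_t(\dot{H}^1)}$, whose linear part
$\bigl[\sum_{j}(1-e^{-4c2^{2j}t})^{1/2}(2^{j/2}\|\dot{\Delta}_j(u_0,B_0,v_0)\|_{L^2})^2\bigr]^{1/2}$
tends to $0$ as $t\to0^+$ by splitting into finitely many middle frequencies plus a small tail (cf.\ \eqref{sum1}--\eqref{sum3}); only \emph{after} this bootstrap are the $L^\infty_t(\dot{H}^{1/2})$ and $L^2_t(\dot{H}^{3/2})$ norms recovered from \eqref{muchim} with $p=\infty,2$ as data plus $\delta^2$. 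You need this (or an equivalent ``smallness of the free evolution in a critical auxiliary norm'') to replace the nonexistent power of $T_*$; a plain energy estimate for $u$ at the $\dot H^{1/2}$ level will not do.

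A secondary, fixable slip: in \lemref{commutator} you take $(s,r,\eta,\theta)=(\tfrac12,2,\tfrac12,\tfrac12)$, whose second term is $\|\nabla\times v\|_{\dot{B}^{1}_{2,\infty}}\|B\|_{\dot{B}^{1}_{2,2}}\lesssim\|v\|_{\dot{H}^{2}}\|B\|_{\dot{H}^{1}}$, demanding two derivatives of $v$ that are not available (you only control $v$ in $L^2_t(\dot H^{3/2})$). The paper takes $\theta=\tfrac32$, so that both terms read $\|B\|_{\dot{H}^{2}}\|v\|_{\dot{H}^{1}}$ and are absorbed, after H\"older in time, into $\|B\|_{L^4_t(\dot{H}^2)}\|v\|_{L^4_t(\dot{H}^1)}\lesssim\|(u,B,v)\|_{L^4_t(\dot{H}^1)}^2$ via $\|B\|_{\dot{H}^{2}}\lesssim\|u\|_{\dot{H}^{1}}+\|v\|_{\dot{H}^{1}}$.
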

\begin{proof}Apply the operator $\dot{\Delta}_j$ to both sides of \eqref{extend eq}, notice that\begin{equation}\label{lala2}
		\begin{split}
			&\Big(\dot{\Delta}_j(\nabla\times( B\times(\nabla\times v))),\dot{\Delta}_jv\Big)
			\\=&	\Big(\dot{\Delta}_j(B\times(\nabla\times v) )-B\times(\nabla\times\dot{\Delta}_j v) ,\nabla\times\dot{\Delta}_jv\Big)
			\\=&	\Big([\dot{\Delta}_j,B\times](\nabla\times v) ,\nabla\times\dot{\Delta}_jv\Big),
		\end{split}
	\end{equation}then following the arguments similar as \eqref{Bes}-\eqref{3}, we have that for any $t<T$ and any $p,q\in [2,\infty]$ with $1+\frac{1}{p}=\frac{1}{2}+\frac{1}{q}$,  \begin{equation}\label{cjbyfq}
		\begin{split}
			\|(\dot{\Delta}_ju,&\dot{\Delta}_jB,\dot{\Delta}_jv)\|_{L^p_t(L^2)}\lesssim \Big(\frac{1-e^{-pc2^{2j}t}}{pc2^{2j}}\Big)^{\frac{1}{p}}\|(\dot{\Delta}_ju_0,\dot{\Delta}_jB_0,\dot{\Delta}_jv_0)\|_{L^2}\\&+\Big(\frac{1-e^{-qc2^{2j}t}}{qc2^{2j}}\Big)^{\frac{1}{q}}\Big(\|\dot{\Delta}_j\Div(B\otimes B)\|_{L^2_t(L^2)}+\|\dot{\Delta}_j\Div(u\otimes u)\|_{L^2_t(L^2)}\\&+\|\dot{\Delta}_j\nabla\times(v\times  B)\|_{L^2_t(L^2)}+\|\dot{\Delta}_j\nabla\times(v\times  u)\|_{L^2_t(L^2)}\\&+\|\dot{\Delta}_j\nabla\times(v\cdot\nabla B)\|_{L^2_t(L^2)}+2^j\|[\dot{\Delta}_j,B\times](\nabla\times v)\|_{L^2_t(L^2)}\Big).
		\end{split}
	\end{equation}Multiplying both sides of \eqref{3} by $2^{j(\frac{1}{2}+\frac{2}{p})}$ and taking the $\ell^2(\mathbb{Z})$ norm, we arrive at\begin{equation}\label{muchim}
		\begin{split}
			\|(u,B&,v)\|_{\widetilde{L}_t^p(\dot{H}^{\frac{1}{2}+\frac{2}{p}})}\lesssim \Big[\sum_{j\in\mathbb{Z}}({1-e^{-pc2^{2j}t}})^{\frac{2}{p}}(2^{j\frac{1}{2}}\|(\dot{\Delta}_ju,\dot{\Delta}_jB,\dot{\Delta}_jv)\|_{L^2})^2\Big]^{\frac{1}{2}}
			\\&+\|B\otimes B\|_{L^2_t(\dot{H}^{\frac{1}{2}})}+\|u\otimes u\|_{L^2_t(\dot{H}^{\frac{1}{2}})}+\|v\times B\|_{L^2_t(\dot{H}^{\frac{1}{2}})}+\|v\times u\|_{L^2_t(\dot{H}^{\frac{1}{2}})}\\&+\|v\cdot\nabla B\|_{L^2_t(\dot{H}^{\frac{1}{2}})}+\Big\|\|2^{j\frac{1}{2}}\|[\dot{\Delta}_j,B\times](\nabla\times v)\|_{L^2}\|_{\ell^2(\mathbb{Z})}\Big\|_{L^2_t}.
		\end{split}
	\end{equation}Set $(s,r,\eta,\theta)=(\frac{1}{2},2,\frac{1}{2},\frac{1}{2})$ in \lemref{product3}, we have the following product law\begin{equation}\label{haha1}
		\|fg\|_{\dot{H}^{\frac{1}{2}}}\lesssim \|f\|_{\dot{H}^{1}}\|g\|_{\dot{H}^{1}}.
	\end{equation}Hence, applying \eqref{haha1}, the first five nonlinear terms in the right-hand side of \eqref{muchim} can be bounded by $C\|(u,B,\nabla B,v)\|_{L^4_t(\dot{H}^{1})}^2$. Thanks to \lemref{commutator} with
 $(s,r,\eta,\theta)=(\frac{1}{2},2,\frac{1}{2},\frac{3}{2})$, the commutator term can be bounded as follows\begin{equation*}
		\begin{split}
	\Big\|\|2^{j\frac{1}{2}}\|[\dot{\Delta}_j,B\times](\nabla\times v)\|_{L^2}\|_{\ell^2(\mathbb{Z})}\Big\|_{L^2_t}\lesssim
			\|B\|_{L^4_t(\dot{H}^{2})}\|v\|_{L^4_t(\dot{H}^{1})}.
		\end{split}
	\end{equation*}In view of the vector identity\begin{equation*}
		\nabla\times (\nabla\times w)+\Delta w=\nabla\Div w,
	\end{equation*}one sees that\begin{equation*}
		B=(-\Delta)^{-1}\nabla\times (\nabla\times B)=(-\Delta)^{-1}\nabla\times (u-v).
	\end{equation*}Since $(-\Delta)^{-1}\nabla\times$ is a homogeneous Fourier multiplier of degree $-1$, it follows from \lemref{prop} (iv) that\begin{equation*}\begin{split}
			\|B\|_{\dot{H}^{s+1}}\lesssim  \|u\|_{\dot{H}^{s}}+\|v\|_{\dot{H}^{s}}.
		\end{split}
	\end{equation*}Therefore, all the nonlinear terms are controlled by $C\|(u,B,v)\|_{L^4_t(\dot{H}^{1})}^2$. Taking $p=4$ in \eqref{muchim} gives us that there exists a positive constant $C_3$ such that for any $t<T$,\begin{equation}\label{cool}
		\begin{split}
			\|(u,B,v)\|_{{L}_t^4(\dot{H}^{1})}\leq &C_3\Big( \Big[\sum_{j\in\mathbb{Z}}({1-e^{-4c2^{2j}t}})^{\frac{1}{2}}(2^{j\frac{1}{2}}\|(\dot{\Delta}_ju,\dot{\Delta}_jB,\dot{\Delta}_jv)\|_{L^2})^2\Big]^{\frac{1}{2}}\\&+\|(u,B,v)\|_{L^4_t(\dot{H}^{1})}^2\Big).
		\end{split}
	\end{equation}Recall the fact that for any given $\delta>0$, there exists a positive time $T_*$ such that\begin{equation*}
		\Big[\sum_{j\in\mathbb{Z}}({1-e^{-4c2^{2j}T_*}})^{\frac{1}{2}}(2^{j\frac{1}{2}}\|(\dot{\Delta}_ju,\dot{\Delta}_jB,\dot{\Delta}_jv)\|_{L^2})^2\Big]^{\frac{1}{2}}<\delta.
	\end{equation*}Define\begin{equation*}
		T_{**}=\sup\Big\{t\in[0,\min\{T,T_*\}),\,\,\|(u,B,v)\|_{{L}_t^4(\dot{H}^{1})}\leq2C_3\delta\Big\},
	\end{equation*}where $\delta$ will be determined later on. Hence, it follows from \eqref{cool} that for all $t\in[0,T_{**})$,\begin{equation}\begin{split}
			\|(u,B,v)\|_{{L}_t^4(\dot{H}^{1})}&\leq C_3\delta(1+4C_3^2\delta).
		\end{split}
	\end{equation}Taking $\delta$ small enough such that $4C_3^2\delta<\frac{1}{2}$ yields that for all $t\in[0,T_{**})$,\begin{equation}\begin{split}
			\|(u,B,v)\|_{{L}_t^4(\dot{H}^{1})}&< \frac{3}{2}C_3\delta.
		\end{split}
	\end{equation}Then by the continuity of $\|(u,B,v)\|_{{L}_t^4(\dot{H}^{1})}$, we have $T_{**}=\min\{T,T_*\}$.\par We assert that $T_*<T$. Indeed, if $T\leq T_*<\infty$, we infer from \eqref{muchim} with $p=2$ that\begin{equation*}
		\|(u,B,v)\|_{{L}_T^2(\dot{H}^{\frac{3}{2}})}\lesssim \|(u_0,B_0,v_0)\|_{\dot{H}^{\frac{1}{2}}}+\|(u,B,v)\|_{{L}_T^4(\dot{H}^{1})}^2<\infty.
	\end{equation*}By $\dot{H}^{\frac{3}{2}}(\mathbb{R}^3)\hookrightarrow \text{BMO}(\mathbb{R}^3)$, the above inequality contradicts with the blow-up criterion \eqref{blowup}. Consequently, $T_*<T$. Thus, combining \eqref{nice0} and \eqref{muchim} leads to\begin{equation}
		\begin{split}
			&\|u\|_{L^\infty_{T_*}({H}^{\frac{1}{2}})}+\|\nabla u\|_{L^2_{T_*}({H}^{\frac{1}{2}})}+\|B\|_{L^\infty_{T_*}({H}^{\frac{3}{2}})}+\|\nabla B\|_{L^2_{T_*}({H}^{\frac{3}{2}})}
			\\
			\lesssim& \|u\|_{L^\infty_{T_*}(L^2)}+\|B\|_{L^\infty_{T_*}(L^2)}+\|\nabla u\|_{L^2_{T_*}(L^2)}+\|\nabla B\|_{L^2_{T_*}(L^2)}\\&+\|u\|_{\widetilde{L}^\infty_{T_*}(\dot{H}^{\frac{1}{2}})}+\|v\|_{\widetilde{L}^\infty_{T_*}(\dot{H}^{\frac{1}{2}})}+\|u\|_{\widetilde{L}^2_{T_*}(\dot{H}^{\frac{3}{2}})}+\|v\|_{\widetilde{L}^2_{T_*}(\dot{H}^{\frac{3}{2}})}\\\lesssim& \|u_0\|_{{H}^{\frac{1}{2}}}+\|B_0\|_{{H}^{\frac{3}{2}}}+\|(u,B,v)\|_{{L}_{T_*}^4(\dot{H}^{1})}^2\\
			\lesssim& \|u_0\|_{{H}^{\frac{1}{2}}}+\|B_0\|_{{H}^{\frac{3}{2}}}+\delta^2.
		\end{split}
	\end{equation}This completes the proof of \propref{secondproof}.
\end{proof}
\section{Proof of \thmref{thm3}}\label{section3}
This section is devoted to \thmref{thm3}. In the first subsection, we extend the local solutions constructed in \thmref{thm1} globally in time provided the initial data is small enough. The optimal time-decay rates of solutions will be derived in the second subsection.  
\subsection{Global existence for small initial data}
\quad\, We denote by $T^*$ the largest time such that there holds \eqref{local} in \thmref{thm1}. It suffices to prove that $T^*=\infty$ under the assumption \eqref{smallcon}. \par 
From \eqref{gg2}, we have that for all $t\in [0,T^*)$, \begin{equation}\label{globalap}
	\begin{split}
	&\|u\|_{L^\infty_t({H}^{\frac{1}{2}+\sigma})}+\|\nabla u\|_{L^2_t({H}^{\frac{1}{2}+\sigma})}+\|B\|_{L^\infty_T({H}^{\frac{3}{2}})}+\|\nabla B\|_{L^2_t({H}^{\frac{3}{2}})}\\&\lesssim \|u_0\|_{{H}^{\frac{1}{2}+\sigma}}+\|B_0\|_{{H}^{\frac{3}{2}}}+\|u\cdot\nabla u\|_{L^2_t(\dot{H}^{\sigma-\frac{1}{2}})}+\|B\cdot\nabla B\|_{L^2_t(\dot{H}^{\sigma-\frac{1}{2}})}\\&\quad+\|\nabla\times(u\times B)\|_{L^2_t(\dot{H}^{\frac{1}{2}})}+\Big\|\|2^{j\frac{3}{2}}\|[\dot{\Delta}_j,B\times](\nabla\times B)\|_{L^2}\|_{\ell^2(\mathbb{Z})}\Big\|_{L^2_t}.
	\end{split}
\end{equation}Thanks to the product laws \eqref{ppr1} and \eqref{ppr2}, we derive\begin{equation}\label{hope111}
\begin{split}
	\|u\cdot\nabla u\|_{L^2_T(\dot{H}^{\sigma-\frac{1}{2}})}&\lesssim \|u\otimes u\|_{L^2_T(\dot{H}^{\frac{1}{2}+\sigma})}\\
	&\lesssim \|u\|_{L^\infty_T(\dot{H}^{\frac{1}{2}+\frac{\sigma}{2}})}\|u\|_{L^2_T(\dot{H}^{\frac{3}{2}+\frac{\sigma}{2}})}\\
	&\lesssim \|u\|_{L^\infty_T(\dot{H}^{\frac{1}{2}+\frac{\sigma}{2}})}\|\nabla u\|_{L^2_T(\dot{H}^{\frac{1}{2}+\frac{\sigma}{2}})}\\
	&\lesssim \|u\|_{L^\infty_T({H}^{\frac{1}{2}+\sigma})}\|\nabla u\|_{L^2_T({H}^{\frac{1}{2}+\sigma})}.\end{split}
\end{equation}and\begin{equation}
\begin{split}
	\|B\cdot\nabla B\|_{L^2_T(\dot{H}^{\sigma-\frac{1}{2}})}&\lesssim\|B\otimes B\|_{L^2_T(\dot{H}^{\frac{1}{2}+\sigma})}\\
	&\lesssim \|B\|_{L^\infty_T(\dot{H}^{\frac{1}{2}+\frac{\sigma}{2}})}\|B\|_{L_T^{2}(\dot{H}^{\frac{3}{2}+\frac{\sigma}{2}})}\\
	&\lesssim \|B\|_{L^\infty_T(\dot{H}^{\frac{1}{2}+\frac{\sigma}{2}})}\|\nabla B\|_{L_T^{2}(\dot{H}^{\frac{1}{2}+\frac{\sigma}{2}})}\\
	&\lesssim \|B\|_{L^\infty_T({H}^{\frac{3}{2}})}\|\nabla B\|_{L_T^{2}({H}^{\frac{3}{2}})}.\end{split}
\end{equation}and
\begin{equation}
\begin{split}
	&\|\nabla\times(u\times B)\|_{L^2_T(\dot{H}^{\frac{1}{2}})}
	\lesssim \|u\times B\|_{L^2_T(\dot{H}^{\frac{3}{2}})}\\
	&\lesssim  \|u\|_{L^\infty_T(\dot{H}^{\frac{1}{2}+\frac{\sigma}{2}})}\|B\|_{L^2_T(\dot{H}^{\frac{5}{2}-\frac{\sigma}{2}})}+\|u\|_{L^2_T(\dot{H}^{\frac{3}{2}+\frac{\sigma}{2}})}\|B\|_{L^\infty_T(\dot{H}^{\frac{3}{2}-\frac{\sigma}{2}})}\\
	&\lesssim \|u\|_{L^\infty_T(\dot{H}^{\frac{1}{2}+\frac{\sigma}{2}})}\|\nabla B\|_{L^2_T(\dot{H}^{\frac{3}{2}-\frac{\sigma}{2}})}+\|\nabla u\|_{L^2_T(\dot{H}^{\frac{1}{2}+\frac{\sigma}{2}})}\|B\|_{L^\infty_T(\dot{H}^{\frac{3}{2}-\frac{\sigma}{2}})}\\
	&\lesssim \|u\|_{L^\infty_T({H}^{\frac{1}{2}+\sigma})}\|\nabla B\|_{L^2_T({H}^{\frac{3}{2}})}+\|\nabla u\|_{L^2_T({H}^{\frac{1}{2}+\sigma})}\|B\|_{L^\infty_T({H}^{\frac{3}{2}})}.
\end{split}
\end{equation}
Tackling the commutator term as \eqref{target2} and using the interpolation, we can get
\begin{equation}\label{commueses}
\begin{split}
	&\Big\|\|2^{j\frac{3}{2}}\|[\dot{\Delta}_j,B\times](\nabla\times B)\|_{L^2}\|_{\ell^2(\mathbb{Z})}\Big\|_{L^2_t}\\&\lesssim  \|B\|_{L^4_t(\dot{H}^{2})}^2
	\lesssim \Big(\|B\|_{L^\infty_t(\dot{H}^{\frac{3}{2}})}^{\frac{1}{2}}\|\nabla B\|_{L^2_t(\dot{H}^{\frac{3}{2}})}^{\frac{1}{2}}\Big)^2\\
	&\lesssim \|B\|_{L^\infty_t({H}^{\frac{3}{2}})}\|\nabla B\|_{L^2_t({H}^{\frac{3}{2}})}.
\end{split}
\end{equation}Denote\begin{equation*}
\mathcal{E}(t):=\|u\|_{L^\infty_t({H}^{\frac{1}{2}+\sigma})}+\|\nabla u\|_{L^2_t({H}^{\frac{1}{2}+\sigma})}+\|B\|_{L^\infty_t({H}^{\frac{3}{2}})}+\|\nabla B\|_{L^2_t({H}^{\frac{3}{2}})}.
\end{equation*}Plugging \eqref{hope111}-\eqref{commueses} into \eqref{globalap}, we can deduce that there exists a constant $C_4>1$ such that for all $t\in [0,T^*)$, \begin{equation}\label{es123}
\begin{split}
\mathcal{E}(t)\leq C_4(\mathcal{E}(0)+\mathcal{E}^2(t)).
\end{split}
\end{equation}
Define\begin{equation}
T^{**}:=\sup\Big\{t\in [0,T^*):\,\mathcal{E}(t)\leq 2C_4\mathcal{E}(0)\Big\}.
\end{equation}It follows from \eqref{es123} that for all $t\in[0,T^{**})$ that\begin{equation}
\mathcal{E}(t)\leq C_4(\mathcal{E}(0)+4C_4^2\mathcal{E}^2(0)).
\end{equation}Since $\mathcal{E}(0)\leq c_0$ where $c_0$ is given in \eqref{smallcon}, taking $c_0$ small enough such that $4C_4^2c_0<\frac{1}{2}$, we can get\begin{equation}
\mathcal{E}(t)<\frac{3}{2}C_4\mathcal{E}(0)\quad\text{for}\quad t\in[0,T^{**}). 
\end{equation}Then, by the continuity of $\mathcal{E}(t)$, we conclude that $T^{**}=T^*$.\par
We assert that $T^*=\infty$. Becasue $\mathcal{E}(t)$ is uniformly bounded on $[0,T^*)$ (and notice that  $\|B\|_{\widetilde{L}^{\infty}_{T^*}(\dot{H}^{\frac{3}{2}})}<\infty$), it follows from \thmref{thm1} and \eqref{fd2} that for any $t_0\in [0,T^*)$, we can solve \eqref{main eq} on $[t_0,t_0+T^\prime]$ with some fixed constant $T^\prime$. If $T^*<+\infty$, choosing $t_0>T^*-T^\prime$ means that the solution $(u,B)$ can be continued beyond $T^*$, which contradicts with the definition of $T^*$. Thus, $T^*=+\infty$. This completes the proof of the global well-posedness part of \thmref{thm3}.
\par 

\subsection{Optimal time-decay rates of solutions}
\quad\, In this subsection, we are going to derive the optimal time-decay rates of the global solutions constructed in the previous subsection. Recall that $(u(t),B(t))\in H^\infty$ after $t>0$. By performing a standard energy argument, one easily have\begin{equation}\label{ghgjd}
	\begin{split}
		&\frac{d}{dt}(\|\Lambda^su\|_{L^2}^2+\|\Lambda^sB\|_{L^2}^2)+\|\Lambda^{s+1}u\|_{L^2}^2+\|\Lambda^{s+1}B\|_{L^2}^2\\&\lesssim (\|u\cdot \nabla u\|_{\dot{H}^{s-1}}+\|B\cdot \nabla B\|_{\dot{H}^{s-1}})\|\Lambda^{s+1}u\|_{L^2}\\&\quad+(\|\nabla\times(u\times B)\|_{\dot{H}^{s-1}}+\|[\Lambda^s,B\times](\nabla\times B)\|_{L^2})\|\Lambda^{s+1}B\|_{L^2}.
	\end{split}
\end{equation}Set $(s,r,\eta,\theta)=(l,2,1,1)$ in \lemref{product3}, the following product law holds:\begin{equation}\label{plaw2}
\|fg\|_{\dot{H}^l}\lesssim \|f\|_{\dot{H}^{\frac{1}{2}}}\|g\|_{\dot{H}^{l+1}}+\|g\|_{\dot{H}^{\frac{1}{2}}}\|f\|_{\dot{H}^{l+1}}\quad\text{for}\quad  l>-\frac{3}{2}.
\end{equation}Hence, using \eqref{plaw2} gives\begin{equation}\label{henkl1}
\begin{split}
\|\nabla\times(u\times B)\|_{\dot{H}^{s-1}}&\lesssim \|u\times B\|_{\dot{H}^{s}}\\&\lesssim \|u\|_{\dot{H}^{\frac{1}{2}}}\|B\|_{\dot{H}^{s+1}}+\|B\|_{\dot{H}^{\frac{1}{2}}}\|u\|_{\dot{H}^{s+1}}\\&\lesssim \|u\|_{{H}^{\frac{1}{2}+\sigma}}\|B\|_{\dot{H}^{s+1}}+\|B\|_{{H}^{\frac{3}{2}}}\|u\|_{\dot{H}^{s+1}}.
\end{split}
\end{equation}Similarly,\begin{equation}
\begin{split}
	\|u\cdot \nabla u\|_{\dot{H}^{s-1}}&\lesssim \|u\|_{{H}^{\frac{1}{2}+\sigma}}\|u\|_{\dot{H}^{s+1}},\\\|B\cdot \nabla B\|_{\dot{H}^{s-1}}&\lesssim \|B\|_{{H}^{\frac{3}{2}}}\|B\|_{\dot{H}^{s+1}}.
\end{split}
\end{equation}Taking advantage of \lemref{commues3} and \lemref{GN}, we have
\begin{equation}\label{henkl2}
	\begin{split}
\|[\Lambda^s,B\times](\nabla\times B)\|_{L^2}&\lesssim \|\Lambda^s B\|_{L^6}\|\nabla\times B\|_{L^3}+\|\Lambda^{s-1} (\nabla\times B)\|_{L^6}\|\nabla B\|_{L^3}\\&\lesssim \|B\|_{\dot{H}^{s+1}}\|B\|_{H^{\frac{3}{2}}}.
	\end{split}
\end{equation}Then, inserting \eqref{henkl1}-\eqref{henkl2} into \eqref{ghgjd} yields\begin{equation}
\begin{split}
	&\frac{d}{dt}(\|\Lambda^su\|_{L^2}^2+\|\Lambda^sB\|_{L^2}^2)+\|\Lambda^{s+1}u\|_{L^2}^2+\|\Lambda^{s+1}B\|_{L^2}^2\\&\lesssim (\|u\|_{{H}^{\frac{1}{2}+\sigma}}+\|B\|_{{H}^{\frac{3}{2}}})(\|\Lambda^{s+1}u\|_{L^2}^2+\|\Lambda^{s+1}B\|_{L^2}^2)\\&\lesssim \mathcal{E}(0)(\|\Lambda^{s+1}u\|_{L^2}^2+\|\Lambda^{s+1}B\|_{L^2}^2).
\end{split}
\end{equation}Since $\mathcal{E}(0)\ll 1$, we conclude that \begin{equation}\label{xiaoyl}
\begin{split}
	\frac{d}{dt}(\|\Lambda^su\|_{L^2}^2+\|\Lambda^sB\|_{L^2}^2)+c_1(\|\Lambda^{s+1}u\|_{L^2}^2+\|\Lambda^{s+1}B\|_{L^2}^2)\leq 0
\end{split}
\end{equation}for some constant $c_1>0$. Assume for the moment that there holds\begin{equation}\label{shhengde}
\|u(t)\|_{\dot{B}^{-\gamma}_{2,\infty}}+\|B(t)\|_{\dot{B}^{-\gamma}_{2,\infty}}\leq C_0\quad\text{for all}\quad t\geq 0
\end{equation}with some constant $C_0>0$, which will be proved in \propref{neevolu} below later. If $s>0$, it follows from the interpolation (see \eqref{inter2} in \lemref{prop} (iii)) that\begin{equation}
\|\Lambda^s(u,B)\|_{L^2}\lesssim \Big(\|(u,B)\|_{\dot{B}^{-\gamma}_{2,\infty}}\Big)^{\frac{1}{s+\gamma+1}}\Big(\|\Lambda^{s+1}(u,B)\|_{L^2}\Big)^{1-\frac{1}{s+\gamma+1}}.
\end{equation}By means of \eqref{shhengde}, we get\begin{equation}
\begin{split}
	\|\Lambda^{s+1}(u,B)\|_{L^2}\geq C\Big(\|\Lambda^s(u,B)\|_{L^2}\Big)^{1+\frac{1}{s+\gamma}}.
\end{split}
\end{equation}Thus, thanks to \eqref{xiaoyl}, there exists a constant $c_2>0$ such that the following Lyapunov-type inequality holds\begin{equation}
\begin{split}
	\frac{d}{dt}(\|\Lambda^su\|_{L^2}^2+\|\Lambda^sB\|_{L^2}^2)+c_2\Big(\|\Lambda^{s}u\|_{L^2}^2+\|\Lambda^{s}B\|_{L^2}^2\Big)^{1+\frac{1}{s+\gamma}}\leq 0.
\end{split}
\end{equation}Solving this inequality directly gives that for all $t\geq 1$,\begin{equation}
\begin{split}
\|\Lambda^su(t)\|_{L^2}^2+\|\Lambda^sB(t)\|_{L^2}^2&\leq \Big[\Big(\|\Lambda^su(1)\|_{L^2}^2+\|\Lambda^sB(1)\|_{L^2}^2\Big)^{-\frac{1}{s+\gamma}}+\frac{c_2}{s+\gamma}(t-1)\Big]^{-(s+\gamma)}\\
&\lesssim (1+t)^{-(s+\gamma)},
\end{split}
\end{equation}which implies \eqref{decay} with $s>0$. On the other hand, if $\gamma>0$, \eqref{decay} with $s=0$ can be obtained by using the interpolation\begin{equation}\begin{split}
\|(u,B)(t)\|_{L^2}\lesssim \|(u,B)(t)\|_{\dot{B}^{-\gamma}_{2,\infty}}^{\frac{1}{1+\gamma}}\|(u,B)(t)\|_{\dot{H}^1}^{\frac{\gamma}{1+\gamma}}\lesssim(1+t)^{-\frac{\gamma}{2}}.
\end{split}
\end{equation}If $\gamma=0$, \eqref{decay} with $s=0$ certainly holds. This means that the proof of \eqref{decay} is finished.\par 
At the last step, we should get back to prove \eqref{shhengde}, namely, the evolution of negative Besov norms of solutions. Actually, we have the following statement. 
\begin{prop}\label{neevolu}
	Let $\gamma\in[0,\frac{5}{2}]$ and $(u_0,B_0)\in \dot{B}^{-\gamma}_{2,\infty}(\mathbb{R}^3)$, it holds that for all $t\geq 0$,\begin{equation}\label{nbound}
		\|u(t)\|_{\dot{B}^{-\gamma}_{2,\infty}}+\|B(t)\|_{\dot{B}^{-\gamma}_{2,\infty}}\leq C_0,
	\end{equation}where the constant $C_0>0$ depends on the norms of the initial data.
\end{prop}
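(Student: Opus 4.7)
The plan is to propagate the $\dot B^{-\gamma}_{2,\infty}$-norm by adapting the frequency-localized Duhamel analysis of Section \ref{sec3} — only now with the weight $2^{-j\gamma}$ rather than the positive $2^{j\cdot 3/2}$ — and then to close the estimate using the smallness of the global Sobolev bound $\mathcal E(t)\le 2C_4\mathcal E(0)$ already established. First I apply $\dot\Delta_j$ to the equations written in divergence form, namely $\partial_t u-\Delta u=-\mathcal P\,\Div(u\otimes u-B\otimes B)$ and $\partial_t B-\Delta B=\Div(B\otimes u-u\otimes B)-\nabla\times((\nabla\times B)\times B)$, take the $L^2$ inner product with $\dot\Delta_j u$ and $\dot\Delta_j B$, invoke Bernstein to extract the damping $c\,2^{2j}\|\dot\Delta_j(\cdot)\|_{L^2}^2$, divide through by the norm and integrate in time to obtain dyadic Duhamel inequalities of the form
\begin{equation*}
\|\dot\Delta_j u(t)\|_{L^2}\lesssim e^{-c2^{2j}t}\|\dot\Delta_j u_0\|_{L^2}+\int_0^t e^{-c2^{2j}(t-\tau)}\,2^j\|\dot\Delta_j(u\otimes u-B\otimes B)\|_{L^2}\,d\tau,
\end{equation*}
together with an analogous bound for $B$ that also carries the Hall commutator $C\,2^j\|[\dot\Delta_j,B\times](\nabla\times B)\|_{L^2}$, exactly as in \eqref{convo}.

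Next I multiply by $2^{-j\gamma}$ and apply Cauchy--Schwarz in time on the convolution: the integral of the exponential produces a factor $(c\,2^{2j})^{-1/2}$ that exactly cancels the divergence-derived $2^j$, so after taking $\sup_{j\in\mathbb Z}$ the nonlinear tail reduces to $\|u\otimes u\|_{\widetilde L^2_t\dot B^{-\gamma}_{2,\infty}}+\|B\otimes B\|_{\widetilde L^2_t\dot B^{-\gamma}_{2,\infty}}+\|u\otimes B\|_{\widetilde L^2_t\dot B^{-\gamma}_{2,\infty}}$ plus the corresponding commutator piece in the same space. I then estimate each product via Bony's decomposition: for $\gamma\in[0,\tfrac32)$, \lemref{product3} with $s=-\gamma$ applies directly, and one factor is kept in $\dot B^{-\gamma}_{2,\infty}$ (to be absorbed later) while the other is time-interpolated between the globally bounded $L^\infty_t H^{1/2+\sigma}$ and $L^2_t H^{3/2+\sigma}$ for $u$ (respectively $L^\infty_t H^{3/2}$ and $L^2_t H^{5/2}$ for $B$); for $\gamma\in[\tfrac32,\tfrac52]$ I would split $fg=\dot T_fg+\dot T_gf+\dot R(f,g)$ by hand and treat each piece via \lemref{product1}, routing the negative regularity through the embedding $\dot B^{-\gamma}_{2,\infty}\hookrightarrow\dot B^{-\gamma-3/2}_{\infty,\infty}$ in the paraproducts and through $\dot B^{\alpha}_{1,\infty}\hookrightarrow\dot B^{\alpha-3/2}_{2,\infty}$ in the remainder, with $\alpha$ chosen positive so the remainder makes sense. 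The Hall commutator is handled by an analogue of \lemref{commutator} adapted to negative target regularity, again by splitting it as in \eqref{decom1} and estimating each piece separately.

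Collecting these bounds yields a closed inequality
\begin{equation*}
\|(u,B)(t)\|_{\dot B^{-\gamma}_{2,\infty}}\le\|(u_0,B_0)\|_{\dot B^{-\gamma}_{2,\infty}}+C\,\mathcal E(0)\sup_{0\le\tau\le t}\|(u,B)(\tau)\|_{\dot B^{-\gamma}_{2,\infty}}+C\,\mathcal E(0)^2,
\end{equation*}
and the smallness condition $\mathcal E(0)\le c_0$ lets me absorb the middle term into the left-hand side, producing the desired uniform bound $C_0$ depending on $\|u_0\|_{H^{1/2+\sigma}}+\|B_0\|_{H^{3/2}}+\|(u_0,B_0)\|_{\dot B^{-\gamma}_{2,\infty}}$. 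The main obstacle is the product and commutator analysis in $\dot B^{-\gamma}_{2,\infty}$ at the upper range $\gamma\in[\tfrac32,\tfrac52]$: both \lemref{product3} and \lemref{commutator} require the target exponent $s>-\tfrac32$, so they do not apply there, and each piece of the Bony decomposition (low-high paraproduct, high-low paraproduct, remainder, and the Hall commutator) must be handled individually with carefully matched Besov indices and Hölder--Besov embeddings so as to respect both the $L^2$-integrability and the negative regularity simultaneously.
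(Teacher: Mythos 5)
Your treatment of the range $\gamma\in[0,\tfrac32)$ is essentially the paper's argument (the paper uses a dyadic energy inequality plus Gronwall with the square-integrable coefficient $\|\nabla u\|_{H^{1/2+\sigma}}^2+\|\nabla B\|_{H^{3/2}}^2$ rather than Duhamel plus absorption by smallness, but both close). The genuine gap is in the range $\gamma\in[\tfrac32,\tfrac52]$, where you correctly identify the obstacle but your proposed fix does not work. Keeping one factor in $\dot B^{-\gamma}_{2,\infty}$ forces the remainder term $\dot R(f,g)$ through \lemref{product1} with $s_1=-\gamma$, which requires the other factor to carry regularity $s_2>\gamma$ \emph{and} $s_2\ge\tfrac32$ in order to land back in $\dot B^{-\gamma}_{2,\infty}$ after the embedding $\dot B^{s_2-\gamma}_{1,\infty}\hookrightarrow\dot B^{s_2-\gamma-3/2}_{2,\infty}$. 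For $u\otimes u$ the available regularity is only $L^2_t\dot H^{3/2+\sigma}$ with $\sigma$ possibly arbitrarily small, so $s_2>\gamma$ fails whenever $\gamma\ge\tfrac32+\sigma$; for $B\otimes B$ and the Hall term the requirement $s_2>\tfrac52$ fails at and near the endpoint $\gamma=\tfrac52$ since $B\in L^2_t\dot H^{5/2}$ only. The condition $s_1+s_2>0$ for the remainder is a real obstruction here, not a matter of matching indices.

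The paper resolves this by a qualitatively different mechanism that your proposal is missing: for $\gamma\in[\tfrac32,\tfrac52]$ it does \emph{not} keep any factor at regularity $-\gamma$. Instead it estimates $\|u\cdot\nabla u\|_{\dot B^{-\gamma}_{2,\infty}}$, $\|\nabla\times(u\times B)\|_{\dot B^{-\gamma}_{2,\infty}}$, $\|\nabla\times((\nabla\times B)\times B)\|_{\dot B^{-\gamma}_{2,\infty}}$ via $L^p\hookrightarrow\dot B^0_{p,\infty}$ with $p=\tfrac{6}{2\gamma+1}$ and H\"older--Sobolev, placing both factors at the \emph{nonnegative} regularities $\dot H^{(5-2\gamma)/4}$ and $\dot H^{(9-2\gamma)/4}$. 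It then invokes the time-decay rates \eqref{decay} already established for $\gamma=\tfrac54$ (which lies in the low range, so \eqref{nbound} is available there) to show these norms decay like $(1+t)^{-(5-\gamma)/4}$, making $\int_0^\infty(1+\tau)^{-(5-\gamma)/2}d\tau$ finite; the nonlinearity is paired only linearly with $\sup_\tau\sqrt{\mathcal F(\tau)}$, and the inequality $\mathcal F\lesssim 1+\sup_\tau\sqrt{\mathcal F(\tau)}$ self-closes without any absorption. This bootstrap through the low-$\gamma$ decay theorem is the key idea you would need to add; without it (or a substitute), the high range of the proposition is not proved.
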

\begin{proof}
	We first consider the case that $\gamma\in[0,\frac{3}{2})$. Apply the operator $\dot{\Delta}_j$ to \eqref{main eq}, by means of a standard energy argument, the Berstein inequality and Young's inequality, we have\begin{equation}
		\begin{split}
			&\frac{1}{2}\frac{d}{dt}\|\dot{\Delta}_ju\|_{L^2}^2+\|\dot{\Delta}_j\nabla u\|_{L^2}^2\leq \Big(\|\dot{\Delta}_j(u\cdot\nabla u)\|_{L^2}+\|\dot{\Delta}_j(B\cdot\nabla B)\|_{L^2}\Big)\|\dot{\Delta}_ju\|_{L^2}\\&\leq C2^{-2j}\Big(\|\dot{\Delta}_j(u\cdot\nabla u)\|_{L^2}^2+\|\dot{\Delta}_j(B\cdot\nabla B)\|_{L^2}^2\Big)+\frac{1}{2}\|\nabla\dot{\Delta}_ju\|_{L^2}^2,\\
			&\frac{1}{2}\frac{d}{dt}\|\dot{\Delta}_jB\|_{L^2}^2+\|\dot{\Delta}_j\nabla B\|_{L^2}^2\leq
			\|\dot{\Delta}_j\nabla\times(u\times B)\|_{L^2}\|\dot{\Delta}_jB\|_{L^2} \\
			&\qquad+\|[\dot{\Delta}_j,B\times](\nabla \times B)\|_{L^2}\|\dot{\Delta}_j(\nabla \times B)\|_{L^2}\\&\leq C\Big(2^{-2j}\|\dot{\Delta}_j\nabla\times(u\times B)\|_{L^2}^2+\|[\dot{\Delta}_j,B\times](\nabla \times B)\|_{L^2}^2\Big)+\frac{1}{2}\|\nabla\dot{\Delta}_jB\|_{L^2}^2,
		\end{split}
	\end{equation}whence, after time integration, multiplying both sides by $2^{-2j\gamma}$ and taking the $\ell^\infty(\mathbb{Z})$ norm yields\begin{equation}\label{a0}
		\begin{split}
			&\|u(t)\|_{\dot{B}^{-\gamma}_{2,\infty}}^2+\|B(t)\|_{\dot{B}^{-\gamma}_{2,\infty}}^2\\&\leq \|u_0\|_{\dot{B}^{-\gamma}_{2,\infty}}^2+\|B_0\|_{\dot{B}^{-\gamma}_{2,\infty}}^2+C\int_{0}^{t}\Big(\|u\cdot\nabla u\|_{\dot{B}^{-\gamma-1}_{2,\infty}}^2+\|B\cdot\nabla B\|_{\dot{B}^{-\gamma-1}_{2,\infty}}^2\\&\qquad+\|\nabla\times(u\times B)\|_{\dot{B}^{-\gamma-1}_{2,\infty}}^2+\|2^{-j\gamma}\|[\dot{\Delta}_j,B\times](\nabla \times B)\|_{L^2}\|_{\ell^\infty(\mathbb{Z})}^2\Big)d\tau,
		\end{split}
	\end{equation}Set $(s,r,\eta,\theta)$=$(-\gamma,\infty,\gamma+\frac{3}{2},\gamma+\frac{3}{2})$ in \lemref{product3}, the following product law holds:\begin{equation}\label{jiayou}
		\|fg\|_{\dot{B}^{-\gamma}_{2,\infty}}\lesssim \|f\|_{\dot{B}^{-\gamma}_{2,\infty}}\|g\|_{\dot{B}^{\frac{3}{2}}_{2,\infty}}+\|g\|_{\dot{B}^{-\gamma}_{2,\infty}}\|f\|_{\dot{B}^{\frac{3}{2}}_{2,\infty}}.
	\end{equation}Hence, resorting to \eqref{jiayou} gives\begin{equation}\label{a3}
		\begin{split}
			\|\nabla\times(u\times B)\|_{\dot{B}^{-\gamma-1}_{2,\infty}}&\lesssim \|u\times B\|_{\dot{B}^{-\gamma}_{2,\infty}}
			\\&\lesssim \|u\|_{\dot{B}^{-\gamma}_{2,\infty}}\|\nabla B\|_{\dot{B}^{\frac{1}{2}}_{2,\infty}}+\|B\|_{\dot{B}^{-\gamma}_{2,\infty}}\|\nabla u\|_{\dot{B}^{\frac{1}{2}}_{2,\infty}}\\
			&\lesssim \|u\|_{\dot{B}^{-\gamma}_{2,\infty}}\|\nabla B\|_{H^{\frac{3}{2}}}+\|B\|_{\dot{B}^{-\gamma}_{2,\infty}}\|\nabla u\|_{H^{\frac{1}{2}+\sigma}}.
		\end{split}
	\end{equation}Similarly,\begin{equation}\label{nels1}
		\begin{split}
			\|u\cdot\nabla u\|_{\dot{B}^{-\gamma-1}_{2,\infty}}
			&\lesssim \|u\|_{\dot{B}^{-\gamma}_{2,\infty}}\|\nabla u\|_{H^{\frac{1}{2}+\sigma}},\\
			\|B\cdot\nabla B\|_{\dot{B}^{-\gamma-1}_{2,\infty}}&\lesssim \|B\|_{\dot{B}^{-\gamma}_{2,\infty}}\|\nabla B\|_{H^{\frac{3}{2}}}.
		\end{split}
	\end{equation}Applying \lemref{commutator} with $(s,r,\eta,\theta)=(-\gamma,\infty,\frac{1}{2},\frac{1}{2})$ and interpolation yields\begin{equation}\label{a5}
		\begin{split}
			&\|2^{-j\gamma}\|[\dot{\Delta}_j,B\times](\nabla \times B)\|_{L^2}\|_{\ell^\infty(\mathbb{Z})}\\\lesssim& \| B\|_{\dot{B}^{2}_{2,\infty}}\| \nabla\times B\|_{\dot{B}^{-\gamma-\frac{1}{2}}_{2,\infty}}+\| \nabla\times B\|_{\dot{B}^{1}_{2,\infty}}\| B\|_{\dot{B}^{-\gamma+\frac{1}{2}}_{2,\infty}}\\
			\lesssim& \Big(\| B\|_{\dot{B}^{-\gamma}_{2,\infty}}^{\frac{1}{2\gamma+5}}\|\nabla B\|_{\dot{B}^{\frac{3}{2}}_{2,\infty}}^{\frac{2\gamma+4}{2\gamma+5}}\Big)\Big(\| B\|_{\dot{B}^{-\gamma}_{2,\infty}}^{\frac{2\gamma+4}{2\gamma+5}}\|\nabla B\|_{\dot{B}^{\frac{3}{2}}_{2,\infty}}^{\frac{1}{2\gamma+5}}\Big)\\
			\lesssim& \|B\|_{\dot{B}^{-\gamma}_{2,\infty}}\|\nabla B\|_{H^{\frac{3}{2}}}.
		\end{split}
	\end{equation}
	Therefore, inserting \eqref{a3}-\eqref{a5} into \eqref{a0}, we deduce that\begin{equation}\label{a111}
		\begin{split}
			\|u(t)\|_{\dot{B}^{-\gamma}_{2,\infty}}^2&+\|B(t)\|_{\dot{B}^{-\gamma}_{2,\infty}}^2\leq \|u_0\|_{\dot{B}^{-\gamma}_{2,\infty}}^2+\|B_0\|_{\dot{B}^{-\gamma}_{2,\infty}}^2\\&+C\int_{0}^{t}\Big(\|\nabla u\|_{H^{\frac{1}{2}+\sigma}}^2+\|\nabla B\|_{H^{\frac{3}{2}}}^2\Big)\Big(\|u\|_{\dot{B}^{-\gamma}_{2,\infty}}^2+\|B\|_{\dot{B}^{-\gamma}_{2,\infty}}^2\Big)d\tau. 
		\end{split}
	\end{equation}Applying the Gronwall's inequality to \eqref{a111} yields \eqref{nbound} with $\gamma\in [0,\frac{3}{2})$.\par 
	Next, we consider the case that $\gamma\in[\frac{3}{2},\frac{5}{2}]$. By performing a routine procedure, one can reach\begin{equation}\label{xingj}
		\begin{split}
			&\|u(t)\|_{\dot{B}^{-\gamma}_{2,\infty}}^2+\|B(t)\|_{\dot{B}^{-\gamma}_{2,\infty}}^2\\&\leq \|u_0\|_{\dot{B}^{-\gamma}_{2,\infty}}^2+\|B_0\|_{\dot{B}^{-\gamma}_{2,\infty}}^2+C\int_{0}^{t}\Big(\|u\cdot\nabla u\|_{\dot{B}^{-\gamma}_{2,\infty}}+\|B\cdot\nabla B\|_{\dot{B}^{-\gamma}_{2,\infty}}\\&\quad+\|\nabla\times(u\times B)\|_{\dot{B}^{-\gamma}_{2,\infty}}+\|\nabla\times((\nabla\times B)\times B)\|_{\dot{B}^{-\gamma}_{2,\infty}}\Big)\Big(\|u\|_{\dot{B}^{-\gamma}_{2,\infty}}+\|B\|_{\dot{B}^{-\gamma}_{2,\infty}}\Big)d\tau.
		\end{split}
	\end{equation}By means of embeddings, H\"{o}lder's inequality and \lemref{GN}, we have
	\begin{equation}
		\begin{split}
					\|\nabla\times(u\times B)\|_{\dot{B}^{-\gamma}_{2,\infty}}&\lesssim \|u\times B\|_{\dot{B}^{0}_{\frac{6}{2\gamma+1},\infty}}\lesssim \|u\times B\|_{L^{\frac{6}{2\gamma+1}}}\\&\lesssim \|u\|_{L^{\frac{12}{2\gamma+1}}}\|B\|_{L^{\frac{12}{2\gamma+1}}}\\&\lesssim \|u\|_{\dot{H}^{\frac{5-2\gamma}{4}}}\|B\|_{\dot{H}^{\frac{5-2\gamma}{4}}}.
		\end{split}
	\end{equation}Similarly,\begin{equation}
		\begin{split}
			\|u\cdot\nabla u\|_{\dot{B}^{-\gamma}_{2,\infty}}&\lesssim \|u\|_{\dot{H}^{\frac{5-2\gamma}{4}}}^2,\\
			\|B\cdot\nabla B\|_{\dot{B}^{-\gamma}_{2,\infty}}&\lesssim \|B\|_{\dot{H}^{\frac{5-2\gamma}{4}}}^2,\\
			\|\nabla\times((\nabla\times B)\times B)\|_{\dot{B}^{-\gamma}_{2,\infty}}&\lesssim \|B\|_{\dot{H}^{\frac{9-2\gamma}{4}}}\|B\|_{\dot{H}^{\frac{5-2\gamma}{4}}}.
		\end{split}
	\end{equation}Observe that $(u_0,B_0)\in \dot{B}^{-\frac{5}{4}}_{2,\infty}$ because of the embedding $\dot{B}^{-\gamma}_{2,\infty}\cap L^2\hookrightarrow \dot{B}^{-\gamma^\prime}_{2,\infty}$ for any $\gamma^\prime\in [0,\gamma]$. Since the evolution of $\dot{B}^{-\frac{5}{4}}_{2,\infty}$ norm has been derived in \eqref{a111} and $0\leq\frac{5-2\gamma}{4}\leq\frac{1}{2}\leq\frac{9-2\gamma}{4}\leq\frac{3}{2}$, we can obtain from what we have proved  for \eqref{decay} with $\gamma=\frac{5}{4}$ that for all $t\geq 0$,\begin{equation}\label{hhaodff}
	\begin{split}
		\|(u,B)(t)\|_{\dot{H}^{\frac{5-2\gamma}{4}}}\lesssim (1+t)^{-\frac{5-\gamma}{4}},\quad \|B(t)\|_{\dot{H}^{\frac{9-2\gamma}{4}}}\lesssim (1+t)^{-\frac{7-\gamma}{4}}.
	\end{split}
\end{equation}Therefore, denote\begin{equation*}
\mathcal{F}(t):=\|u(t)\|_{\dot{B}^{-\gamma}_{2,\infty}}^2+\|B(t)\|_{\dot{B}^{-\gamma}_{2,\infty}}^2,
\end{equation*}then it follows from \eqref{xingj} that\begin{equation}
		\begin{split}
			\mathcal{F}(t)&\leq \|u_0\|_{\dot{B}^{-\gamma}_{2,\infty}}^2+\|B_0\|_{\dot{B}^{-\gamma}_{2,\infty}}^2+C\int_{0}^{t}(1+\tau)^{-\frac{5-\gamma}{2}}d\tau \sup_{0\leq\tau\leq t}\sqrt{\mathcal{F}(\tau)}\\
			&\lesssim 1+\sup_{0\leq\tau\leq t}\sqrt{\mathcal{F}(\tau)},
		\end{split}
	\end{equation}which implies \eqref{nbound} with $\gamma\in[\frac{3}{2},\frac{5}{2}]$. This completes the proof of \propref{neevolu}.
\end{proof}
\noindent \textbf{Acknowledgments}  \quad \\
The author is grateful to the referee for the helpful comments and suggestions.
\\[10pt] 
\noindent \textbf{Declaration}  \quad \\
\textbf{Conflict of interest}  The author states that there is no conflict of interest.
\bibliographystyle{abbrv} 
\bibliography{bioo}
\textsc{School of Mathematics and Computational Science, Wuyi University, Jiangmen 529020, P.R.China}\par 
E-mail address: zhangshunhang@wyu.edu.cn
\end{document}